\newtheorem{thm}{Theorem}[section]
\newtheorem{lem}[thm]{Lemma}
\theoremstyle{definition}
\newtheorem{rem}[thm]{Remark}
\numberwithin{equation}{section}
\renewcommand{\div}{\mbox{div}\,}
\newcommand{\trace}{\mbox{trace}\,}
\newcommand{\dist}{\mbox{dist}\,}
\newcommand{\R}{\mathbb R}
\newcommand{\e}{\varepsilon}
\newcommand{\ov}{\overline}
\newcommand{\p}{\partial}
\newcommand{\comment}[1]{}
\def\h{\hspace*{.24in}}
\newenvironment{myindentpar}[1]%
{\begin{list}{}%
         {\setlength{\leftmargin}{#1}}%
         \item[]%
}
{\end{list}}
\begin{document}

\title[Singular Abreu equations and minimizers of convex functionals]{Singular Abreu equations and minimizers of convex functionals  with a convexity constraint }
\author{Nam Q. Le}
\address{Department of Mathematics, Indiana University,
Bloomington, 831 E 3rd St, IN 47405, USA.}
\email{nqle@indiana.edu}

\thanks{The research of the author was supported in part by the National Science Foundation under grant DMS-1764248}
\subjclass[2010]{49K30, 90B50, 35J40, 35B65, 35J96}
\keywords{Singular Abreu equation, convex functional, convexity constraint, second boundary value problem, Monge-Amp\`ere equation, linearized Monge-Amp\`ere equation, Rochet-Chon\'e model}

\begin{abstract}
We study the solvability of second boundary value problems of fourth order equations of Abreu type arising from approximation of convex functionals whose Lagrangians depend on the gradient variable, subject to a convexity constraint. 
These functionals arise in different scientific disciplines such as Newton's problem of minimal resistance in physics and monopolist's problem in economics.  The right hand sides of our Abreu type equations are quasilinear expressions of second order; they are highly singular and a priori just measures.
However, our analysis in particular shows that minimizers of the 2D Rochet-Chon\'e model perturbed by a strictly convex lower order term, 
 under a convexity constraint, can be approximated in the uniform norm by solutions of the second boundary value problems of singular Abreu equations.  

\end{abstract}
\maketitle
\section{Introduction }
In this paper, we study the solvability and convergence properties of second boundary value problems of fourth order equations of Abreu type arising from approximation of several convex functionals whose Lagrangians depend on the gradient variable, subject to a convexity constraint. 
Our analysis in particular shows that minimizers of the 2D Rochet-Chon\'e model perturbed by a strictly convex lower order term, 
 under a convexity constraint, can be approximated in the uniform norm by solutions of the second boundary value problems of Abreu type equations.
An intriguing feature of our Abreu type equations is that their right hand sides are quasilinear expressions of second order derivatives of a convex function. As such, they are highly singular and a priori just measures.
The main results consist of Theorems \ref{SBV1}, \ref{SBV2}, \ref{SBV3} and \ref{SBV4} to be precisely stated in Section \ref{main_sec}. In the following paragraphs, we motivate the problems to be studied and recall some previous results in the literature.

Let $\Omega_0$ be a bounded, open, smooth, convex domain in $\R^n$ ($n\geq 2$). Let $F(x, z, p):\R^n\times \R\times \R^n\rightarrow \R$ be a smooth function which is convex in each of the variables $z\in\R$ and $p=(p_1,\cdots, p_n)\in\R^n$.
Let $\varphi$ be a convex and smooth function defined in a neighborhood of $\Omega_0$. In several problems in different scientific disciplines such as Newton's problem of minimal resistance in physics and monopolist's problem in economics (see for example \cite{BFK, CLR1, RC}), one usually encounters the following variational problem with a convexity constraint:
\begin{equation}
\label{minp1}
\inf_{u\in \bar{S}[\varphi,\Omega_0]} \int_{\Omega_0} F(x, u(x), Du(x)) dx
\end{equation}
where
\begin{multline}
\label{barS}
\bar{S}[\varphi, \Omega_0]=\{ u: \Omega_0\rightarrow \R\mid u \text{ is convex, }\\ \text{ u admits a convex extension } \varphi \text{ in a neighborhood of } \Omega_0\}.
\end{multline}

Due to the convexity constraint, it is in general difficult to write down a tractable Euler-Lagrange equation for the minimizers of (\ref{minp1}) \cite{Car2, CLR2, Lions}. 
Lions \cite{Lions} showed that, in the sense of distributions, the Euler-Lagrange equation for a minimizer $u$ of (\ref{minp1}) in a limiting case of the constraint (\ref{barS}) is of the form 
\begin{equation}
\label{EL_Lions}
\frac{\p F}{\p z}(x, u(x), Du(x)) -\sum_{i=1}^n \frac{\p}{\p x_i} \left(\frac{\p F}{\p p_i}(x, u(x), Du(x))\right)=\sum_{i, j=1}^n \frac{\p^2}{\p x_i \p x_j} \mu_{ij}
\end{equation}
for some symmetric non-negative matrix $\mu = (\mu_{ij})_{1\leq i, j\leq n}$ of Radon measures; see also Carlier \cite{Car2} for a new proof of this result and related extensions. 

 The structure of the matrix $\mu$ in (\ref{EL_Lions}), to the best of the author's knowledge, is still mysterious up to now.
Thus, for practical purposes such as implementing numerical schemes to find minimizers of (\ref{minp1}), it is desirable to find suitably explicit approximations of $\mu$ in particular and minimizers of (\ref{minp1}) in general. This has been done by Carlier and Radice \cite{CR} when the Lagrangian $F$ does not depend on the gradient variable $p$; see Sect. \ref{F0_sec} for a quick review.
In this paper, we tackle the more challenging case when  $F$ depends on the gradient variable. 
This case is relevant to many realistic models in physics and economics such as ones described in \cite{BFK, RC}.

\subsection{Fourth order equations of Abreu type approximating convex functionals with a convexity constraint}
\label{F0_sec}
When $\varphi$ is strictly convex in a neighborhood of $\Omega_0$, the Lagrangian $F(x, z, p)$ does not depend on $p$, that is, $F(x, z, p)= F^0 (x, z)$, and uniform convex in its second argument and $\frac{\p }{\p z}F^0(x, z)$ is bounded uniformly in $x$ for each fixed $z$, Carlier and Radice \cite{CR} show that one can approximate the minimizer of 
\begin{equation}
\label{minp2}
\inf_{u\in \bar{S}[\varphi,\Omega_0]} \int_{\Omega_0}  F^0(x, u(x)) dx
\end{equation}
by solutions of second boundary value problems of fourth order equations of Abreu type. More precisely, for each $\e>0$,
consider the following second boundary value problem for a uniformly convex function $u_\e$ on an open Euclidean ball $B$ containing $\overline{\Omega_0}$:
\begin{equation}
\label{crp}
  \left\{ 
  \begin{alignedat}{2}\e\sum_{i, j=1}^{n}U_\e^{ij}  \frac{\p^2 w_\e}{\p x_i \p x_j}~& =g_{\e}(\cdot, u_\e)~&&\text{in} ~B, \\\
 w_\e~&= (\det D^2 u_\e)^{-1}~&&\text{in}~ B,\\\
u_\e ~&=\varphi~&&\text{on}~\p B,\\\
w_\e ~&= \psi~&&\text{on}~\p B,
\end{alignedat}
\right.
\end{equation}
where $\psi:= (\det (D^2\varphi))^{-1}$ on $\p B$,
\begin{equation*}
g_{\e}(x, u) = \left\{\begin{array}{rl}
 \frac{\p F^0}{\p z} (x, u)&  x\in \Omega_0,\\
\frac{1}{\e}(u (x)-\varphi(x) ) & x\in  B\setminus \Omega_0,
\end{array}\right.
\end{equation*}
 and $U_\e= (U_\e^{ij})$ is the cofactor matrix of $D^2 u_\e= \left(\frac{\p^2 u_\e}{\p x_i \p x_j}\right)_{1\leq i, j\leq n}\equiv ((u_\e)_{ij})$ of the uniformly convex function $u_\e$, that is
$$U_\e= (\det D^2 u_\e) (D^2 u_\e)^{-1}.$$
 Carlier and Radice \cite[Theorems 4.2 and 5.3]{CR} show that (\ref{crp}) has a unique uniformly convex solution $u_\e\in W^{4,q}(B)$ (for all  $q<\infty$)  which converges uniformly on $\overline{\Omega_0}$ to the unique minimizer of (\ref{minp2}) when $\e\rightarrow 0$.
 
 The first equation of (\ref{crp}) is a fourth order equation of Abreu type. We will say a few words about this fully nonlinear, geometric equation.
 Let $(U^{ij})= (\det D^2 u) (D^2 u)^{-1}$ and $u_{ij}:= \frac{\p^2 u}{\p x_i \p x_j} $ for any function $u$. The Abreu equation \cite{Ab} for a uniformly convex function $u$
$$\sum_{i, j=1}^n U^{ij}[(\det D^2 u)^{-1}]_{ij}=f$$
first arises in differential geometry \cite{Ab, D1, D2} where one would like to find a K\"ahler metric of constant scalar curvature. 
Its related and important cousin is the affine maximal surface equation \cite{TW00, TW05, TW08} in affine geometry:
$$\sum_{i, j=1}^n U^{ij}[(\det D^2 u)^{-\frac{n+1}{n+2}}]_{ij}=0.$$
 We call (\ref{crp}) the second boundary value problem because the values of the function $u_\e$ and its Hessian determinant $\det D^2 u_\e$ are prescribed on the boundary $\p B$. This is in contrast to the 
 first boundary value problem where one prescribes the values of the function $u_\e$ and its gradient $ D u_\e$ on $\p B$. The fourth order equation in (\ref{crp}) arises as the Euler-Lagrange equation of the functional
\begin{equation*}
\int_{\Omega_0}  F^0(x, u(x)) dx +\frac{1}{2\e}\int_{B\setminus\Omega_0} (u-\varphi)^2 dx-\e\int_{B} \log \det D^2 u dx.
\end{equation*}
At the functional level, the penalization $\e\int_{B} \log \det D^2 u dx $ involving the logarithm of the Hessian determinant acts as a good barrier for the convexity constraint in problems like (\ref{minp2}); see also  \cite{BCMO} for related rigorous numerical results at a discretized level.
At the equation level, the results of Carlier and Radice \cite{CR} show that, when the Lagrangian $F$ does not depend on $p$, the matrix $\mu$ in (\ref{EL_Lions}) is well approximated by $\e (D^2 u_\e)^{-1}\equiv \e (u_\e^{ij})$ where $u_\e$ is the solution of (\ref{crp}). To see this, we just note that the cofactor matrix $U_\e$ of $D^2 u_\e$ is divergence-free, that is $\sum_{j=1}^n \frac{\p}{\p x_j} U_\e^{ij}=0$ for all $i=1,\cdots, n$ and hence, noting that $U^{ij}_\e w_\e= u_\e^{ij}$, we can write the left hand side of the first equation in (\ref{crp})
as
$$\e \sum_{i, j=1}^n U^{ij}_\e \frac{\p^2 w_\e}{\p x_i \p x_j} = \sum_{i, j=1}^n  \frac{\p^2 }{\p x_i \p x_j} (\e U^{ij}_\e w_\e)=  \sum_{i, j=1}^n \frac{\p^2}{\p x_i \p x_j} \e u_\e^{ij}.$$

\subsection{Gradient-dependent Lagrangians and the Rochet-Chon\'e model}
The analysis of Carlier-Radice \cite{CR} left open the question of whether one can approximate minimizers of (\ref{minp1}) by solutions of second boundary value problems of fourth order equations of Abreu type when the Lagrangian $F$ depends on the gradient variable $p$. This case is relevant to physics and economic applications. 
We briefly describe here the  Rochet-Chon\'e model in economics.
In the Rochet-Chon\'e model \cite{RC} of the monopolist problem in product line design where the cost of producing product $q$ is the quadratic function $\frac{1}{2}|q|^2$, the monopolist's profit
as a functional of the buyers' indirect utility function $u$
is
$$\Phi(u)= \int_{\Omega_0} \{x\cdot D u(x) - \frac{1}{2}|D u(x)|^2- u(x)\} \gamma(x) dx.$$
Here $\Omega_0\subset\R^n$ is the collection of  types of agents and $\gamma$ is the relative frequency of different types of agents in the population. For a consumer of type $x\in\Omega_0$, the indirect utility function $u(x)$ is computed via the formula
$$u(x) =\max_{q\in Q} \{x\cdot q-p(q)\}$$
where $Q\subset\R^n$ is the product line and $p: Q\rightarrow \R$ is a price schedule that the monopolist needs to design to maximize her overall profit. Since $u$ is the maximum of a family of affine functions, it is convex. 
Maximizing $\Phi (u)$ over convex functions $u$ is equivalent to minimizing the following functional $J_0$ over convex functions $u$: $$J_0(u) =\int_{\Omega_0}F^{RC}(x, u(x), Du(x)) dx~\text{where } F^{RC}(x, z, p)=\frac{1}{2}|p|^2 \gamma(x) -x\cdot p \gamma(x) + z \gamma (x).$$ 
As mentioned in \cite{Ga}, even in this simple looking variational problem, the convexity is not easy to handle from a numerical standpoint. M\'erigot and Oudet \cite{MO} were among the first to make interesting progress in this direction. Here we analyze this problem, and its generalization, from an asymptotic analysis standpoint.

In this paper, we are interested in using the second boundary value problems of fourth order equations of 
Abreu type to approximate minimizer(s) of
the following variational problem
\begin{equation}
\label{minp3}
\inf_{u\in \bar{S}[\varphi,\Omega_0]} J(u)
\end{equation}
where
\begin{equation} 
\label{J_def}
J(u)=\int_{\Omega_0} F(x, u(x), Du(x)) dx,~ \text{with } F(x, z, p)= F^0(x, z) + F^1(x, p).
\end{equation}
The choice of form of $F$ in (\ref{J_def}) simplifies some of our arguments and is clearly motivated by the analysis of the Rochet-Chon\'e model. 

Similar to the analysis of (\ref{crp}) carried out by Carlier-Radice \cite{CR},
our analysis leads us to two very natural questions concerning the following second boundary value problem of a highly singular, fully nonlinear fourth order equation of Abreu type for a uniformly convex function $u$:
\begin{equation}
\label{Abreu2_log}
  \left\{ 
  \begin{alignedat}{2}\sum_{i, j=1}^{n}U^{ij}w_{ij}~& =f_{\delta}(\cdot, u, Du, D^2 u)~&&\text{in} ~\Omega, \\\
 w~&= (\det D^2 u)^{-1}~&&\text{in}~ \Omega,\\\
u ~&=\varphi~&&\text{on}~\p \Omega,\\\
w ~&= \psi~&&\text{on}~\p \Omega.
\end{alignedat}
\right.
\end{equation}
Here $(U^{ij})_{1\leq i, j\leq n}$ is the cofactor matrix of the Hessian matrix $D^2 u=(u_{ij})$, $\delta>0$, $\Omega$ is a bounded, open, smooth, uniformly convex domain containing $\overline{\Omega_0}$ and
\begin{equation}
\label{fdel_log}
f_{\delta}(x, u(x), Du(x), D^2 u(x)) = \left\{\begin{array}{rl}
  \frac{\p}{\p z}F^0(x, u(x)) - \sum_{i=1}^n \frac{\p}{\p x_i} \left(\frac{\p F^1}{\p p_i}(x, Du(x))\right) &  x\in \Omega_0,\\
\frac{1}{\delta}(u (x)-\varphi(x) ) & x\in  \Omega\setminus \Omega_0.
\end{array}\right.
\end{equation}
{\bf Question 1. } 
\begin{center}
Given $\varphi, \psi, F^0$, and $ F^1$, can we solve the second boundary value problem (\ref{Abreu2_log})-(\ref{fdel_log})?
\end{center}
{\bf Question 2.}
\begin{center}
Are minimizers of (\ref{minp3})-(\ref{J_def}) well approximated by solutions of  (\ref{Abreu2_log})-(\ref{fdel_log}) when $\delta\rightarrow 0$?
\end{center}
We will answer Questions 1 and 2 in the affirmative in two dimensions under suitable conditions on $F^0$ and $F^1$--see Theorems \ref{SBV1} and \ref{SBV2} respectively--via analysis of singular Abreu equations.
 \subsection{Singular Abreu equations}
By now, the second boundary value problem 
for the Abreu equation is well understood \cite{CW, Le, Le_JDE, LMT, TW08}. In particular, from the analysis in \cite{Le_JDE}, we know that
if $f\in L^q(\Omega)$ where $q>n$ then we have a unique uniformly convex $W^{4, q}(\Omega)$ solution to the second boundary value problem  of a more general form of the Abreu equation:
\begin{equation}
\label{Abreu_log}
  \left\{ 
  \begin{alignedat}{2}\sum_{i, j=1}^{n}U^{ij}w_{ij}~& =f~&&\text{in} ~\Omega, \\\
 w~&= G^{'}(\det D^2 u)~&&\text{in}~ \Omega,\\\
u ~&=\varphi~&&\text{on}~\p \Omega,\\\
w ~&= \psi~&&\text{on}~\p \Omega
\end{alignedat}
\right.
\end{equation}
where $\varphi\in W^{4, q}(\Omega)$, $\psi\in W^{2, q}(\Omega)$ with $\inf_{\p\Omega}\psi>0$, and $G$ belongs to a class of concave functions which include $G(t) =\frac{t^{\theta}-1}{\theta}$ where $0< \theta<1/n$ and $G(t) =\log t.$
On the other hand, if the right hand side $f$ is only in $L^q(\Omega)$ with $q<n$ then solutions to (\ref{Abreu_log}) might not be in $W^{4, q}(\Omega)$.

In \cite{CR}, the authors established an a priori uniform bound for solutions of (\ref{crp}) thus confirming the solvability of (\ref{crp}) in all dimensions, where $g_\e$ is now bounded, by using the solvability results for (\ref{Abreu_log}).

The second boundary value problem of  Abreu type in (\ref{Abreu2_log}) has highly singular right hand side even in the simple but nontrivial setting
of $F^0(x, z)=0$ and $F^{1}(x, p)=\frac{1}{2}|p|^2$.
Among the simplest analogue of the first equation in (\ref{Abreu2_log}) is 
\begin{equation}
\label{Abreu_del}
U^{ij}[(\det D^2 u)^{-1}]_{ij}=-\Delta u~\text{in }\Omega.
\end{equation}
To the best of our knowledge, the Abreu type equation of the form (\ref{Abreu_del}) has not appeared before in the literature. 
There are several serious challenges in establishing the solvability of its second boundary value problem. We highlight here two aspects among these challenges:
\begin{myindentpar}{1cm}
(C1) It is not known a priori if we can establish the lower bound and upper bound for $\det D^2 u$. Thus, for a convex function $u$, $\Delta u$ can be only a measure.\\
(C2) Even if  we can establish the positive lower bound $\lambda_1$ and upper bound $\lambda_2$ for $\det D^2 u$, 
that is $\lambda_1\leq\det D^2 u\leq \lambda_2$ in $\Omega$,
we can only deduce from the regularity results for the Monge-Amp\`ere equation of De Philippis-Figalli-Savin \cite{DPFS}, Schmidt \cite{Sch} and Savin \cite{S3} that $\Delta u\in L^{1+\e_0}(\Omega)$ where 
$\e_0=\e_0(n, \lambda_1,\lambda_2)>0$ can be arbitrary small. In fact, from Wang's counter-example \cite{W95} to regularity of the Monge-Amp\`ere equations, we know that $\e_0(n, \lambda_1,\lambda_2)\rightarrow 0$ when $\lambda_2/\lambda_1\rightarrow \infty$. In other words, the right hand side of (\ref{Abreu_del}) has low integrability a priori which can be less than the dimension $n$. Thus, the results on the solvability of the second boundary value problem of the Abreu equation in 
\cite{CW, Le, Le_JDE, LMT, TW08} do not apply to the second boundary value problems of (\ref{Abreu_del}) and (\ref{Abreu2_log}). 
\end{myindentpar}

In this paper, we are able to overcome these difficulties for both (\ref{Abreu2_log}) and (\ref{Abreu_del}) in two dimensions under suitable conditions on the convex functions $F^0$ and $F^1$; see Theorem \ref{SBV1} which asserts the solvability of (\ref{Abreu2_log})-(\ref{fdel_log}). This is done via a priori fourth order derivatives estimates and degree theory. For the a priori estimates, the structural conditions on $F^0$ and $F^1$ allow us to establish that $\lambda_1\leq\det D^2 u\leq \lambda_2$ in $\Omega$ for some 
positive constants $\lambda_1,\lambda_2$ and that $f_\delta(\cdot, u, Du, D^2 u)$, as explained in (C2) for $-\Delta u$, belongs to $L^{1+\e_0}(\Omega)$ for some possibly small $\e_0>0$. 
We briefly explain here how we can go beyond second order derivatives estimates and why the dimension is restricted to 2.

Note that (\ref{Abreu2_log}) consists of a Monge-Amp\`ere equation for $u$ in the form of $\det D^2 u=
w^{-1}$ and a linearized Monge-Amp\`ere equation for $w$ in the form of $U^{ij} w_{ij}=f_\delta(\cdot, u, Du, D^2 u)$ because
the coefficient matrix $(U^{ij})$ comes from linearization of the Monge-Amp\`ere operator: $U=\frac{\p\det D^2 u}{\p u_{ij}}.$ For the solvability of second boundary problems such as (\ref{Abreu2_log}) and (\ref{Abreu_del}), as in \cite{CW, Le, Le_JDE, LMT, TW08}, a key ingredient is to establish global H\"older continuity of the linearized Monge-Amp\`ere equation with right hand side having low integrability. 
In our case, the integrability exponent is $1+\e_0$ for a small $\e_0>0$.

To the best of our knowledge, the lowest integrability exponent $q$ for the right hand side of  the linearized Monge-Amp\`ere equation (with Monge-Amp\`ere measure just bounded away from $0$ and $\infty$) for which one can establish a global H\"older continuity estimate 
is $q>n/2$. This fact was proved in the author's paper with Nguyen \cite{LN}. The constraint $1+\e_0>n/2$ for small $\e_0>0$ forces $n$ to be 2. {\bf It is exactly this reason that we restrict ourselves in this paper to considering the case $n=2$.} Using the bounds on the Hessian determinant for $u$ and  the global H\"older estimates in \cite{LN}, we can show that $w$ is globally H\"older continuous.  
Once we have this,
we can apply the global $C^{2,\alpha}$ estimates for the Monge-Amp\`ere equation in \cite{S, TW08} 
 to conclude that $u\in C^{2,\alpha}(\overline{\Omega})$.  We update this information to $U^{ij} w_{ij}=f_\delta(\cdot, u, Du, D^2 u)$ to have a second order uniformly elliptic  equation for $w$ with global H\"older continuous coefficients
 and bounded right hand side. This gives second order derivatives estimates for $w$. Now, fourth order derivative estimates for $u$ easily follows.

Under suitable conditions on $F^0$ and $F^1$
we can 
show that
solutions to (\ref{Abreu2_log})-(\ref{fdel_log}) converge uniformly on compact subsets of $\Omega$ to the unique minimizer of (\ref{minp3})-(\ref{J_def}); see 
Theorem \ref{SBV2}. It implies in particular that
minimizers of the 2D Rochet-Chon\'e model perturbed by a highly convex lower order term, under a convexity constraint, can be approximated in the uniform norm by solutions of second boundary value problems of singular Abreu equations.

\begin{rem}
Our analysis also covers the case when $w=(\det D^2 u)^{-1}$ in (\ref{Abreu2_log}) is replaced by $w= (\det D^2 u)^{\theta-1}$ where $0\leq \theta<1/n$. We will consider these general cases in our main results.
\end{rem}

\begin{rem}
As mentioned above, due to the possibly low integrability of the right hand side $-\Delta u$ of (\ref{Abreu_del}), our analysis is at the moment restricted to two dimensions. However, for the solvability of the second boundary value problem, it is quite unexpected that the structure of $-\Delta u$ in two dimensions, that is $-\Delta u=-\trace (U^{ij})$, allows us to replace the term
$(\det D^2 u)^{-1}$ in (\ref{Abreu_del}) by $H(\det D^2 u)$ for very general functions $H$ including $H(d)= d^{\theta-1}$ for $\theta\in [0,\infty)\setminus \{1\}$;
 see Theorem \ref{SBV3}.
Note that, it is an open question if (\ref{Abreu_log}) is solvable for $f\in L^q(\Omega)$ when $q>n$ and $G^{'}(d) = d^{\theta-1}$ where $\theta \in [\frac{1}{n},\infty).$
\end{rem}
\begin{rem}
 It should not come as a surprise when Abreu type equations appear in problems motivated
from economics. On the one hand, in addition to \cite{CR} and this paper, Abreu type equations also appear in the continuum Nash's bargaining problem \cite{Warren}.
On the other hand, the monopolist's problem can be treated in the framework of optimal transport (see, for example \cite{FKM, Ga}) so it is not totally unexpected to have deep connections with the Monge-Amp\`ere equation.
The interesting point here is that  Abreu type equations involve both the  Monge-Amp\`ere equation and its linearization.
\end{rem}
{\bf Notation.} The following notations will be used throughout the paper. Points in $\R^n$ will be denoted by $x=(x_1,\cdots, x_n)\in\R^n$ or $p= (p_1,\cdots, p_n)\in\R^n$. $I_n$ is the identity $n\times n$ matrix.
We use  $\nu= (\nu_{1},\cdots,\nu_n)$ to denote  the unit outer normal vector field on $\p \Omega$ and $\nu_0$ on $\p\Omega_0$. Unless otherwise stated, repeated indices are summed 
such as $U^{ij} w_{ij}=\sum_{i, j=1}^n U^{ij} w_{ij};$
$$f^{0}(x, z)=\frac{\p F^0(x, z)}{\p z}; F^{1}_{p_i}(x, p)=\frac{\p F^1(x, p)}{\p p_i}; \nabla_p F^1(x, p)=(F^{1}_{p_1}(x, p),\cdots, F^{1}_{p_n}(x, p));$$
$$F^{1}_{p_i p_j}(x, p)=\frac{\p^2 F^1(x, p)}{\p p_i \p p_j}; F^{1}_{p_i x_j}(x, p)=\frac{\p^2 F^1(x, p)}{\p p_i \p x_j}; \div (\nabla_p F^1(x, p))=\sum_{i=1}^n \frac{\p }{\p x_i} \left(\frac{\p F^1(x, p)}{\p p_i} \right).$$
We use $U=(U^{ij})_{1\leq i, j\leq n}$ to denote the cofactor matrix of the Hessian matrix $D^2 u= \left(\frac{\p^2 u}{\p x_i \p x_j}\right)\equiv (u_{ij})_{1\leq i, j\leq n}$ of a function $u\in C^2 (\overline{\Omega})$. If $u$ is  uniformly convex in $\Omega$ then
$U=(\det D^2 u)(D^2 u)^{-1}.$

The rest of the paper is organized as follows. We state our main results in Section \ref{main_sec}. In Section \ref{Tools_sec}, we recall tools used in the proofs of our main theorems. In Section \ref{Apriori_sec}, we establish a priori estimates. 
The final section \ref{pf_sec} proves the main results in Theorems \ref{SBV1}, \ref{SBV2}
 \ref{SBV3} and \ref{SBV4}.
\section{Statements of the main results}
\label{main_sec}
Let $\delta>0$ and let $\Omega_0,\Omega$ be open, smooth, bounded, convex domains in $\R^n$ such that $\Omega_0\subset\subset \Omega$.

 We study the solvability of the following second boundary value problem of a fully nonlinear, fourth order equation of Abreu type for a uniformly convex function $u$:
\begin{equation}
  \left\{ 
  \begin{alignedat}{2}\sum_{i, j=1}^{n}U^{ij}w_{ij}~& =f_{\delta}(\cdot, u, Du, D^2 u)~&&\text{in} ~\Omega, \\\
 w~&= (\det D^2 u)^{\theta-1}~&&\text{in}~ \Omega,\\\
u ~&=\varphi~&&\text{on}~\p \Omega,\\\
w ~&= \psi~&&\text{on}~\p \Omega.
\end{alignedat}
\right.
\label{Abreu2}
\end{equation}
Here $U=(U^{ij})_{1\leq i, j\leq n}$ is the cofactor matrix of the Hessian matrix $D^2 u=(u_{ij})$ and
\begin{equation}
\label{fdel}
f_{\delta}(x, u(x), Du(x), D^2 u(x)) = \left\{\begin{array}{rl}
  f^0(x, u(x)) - \div (\nabla_p F^1(x, D u(x)))&  x\in \Omega_0,\\
\frac{1}{\delta}(u (x)-\varphi(x) ) & x\in  \Omega\setminus \Omega_0.
\end{array}\right.
\end{equation}

We consider the following sets of assumptions for {\it nonnegative} constants $\rho, c_0, C_\ast, \bar c_0, \bar C_\ast$:
\begin{equation}
\label{AsF0}
(f^0(x, z)-f^0(x, \tilde z))(z-\tilde z)\geq \rho|z-\tilde z|^2; |f^0(x, z)|\leq \eta (|z|)~\text{for all } x\in\Omega_0~\text{and all } z,\tilde z\in\R
\end{equation}
where $\eta:[0,\infty)\rightarrow [0,\infty)$ is a continuous and increasing function. 
\begin{equation}
\label{AsH}
0\leq F^1_{p_i p_j}(x, p)\leq C_\ast I_n; |F^1_{p_i x_i}(x, p)| \leq c_0 |p|+C_\ast \text{ for all }x\in\Omega_0~\text{and for each } i.
\end{equation}
\begin{equation}
\label{AsH1}
|F^1_{p_i}(x, p)| \leq \bar c_0  |p| + \bar C_\ast \text{ for }x\in\p\Omega_0~\text{and for each } i;~|\nabla_p F^1(x, p)|\leq \eta (|p|)~~\text{for all } x\in\Omega_0.
\end{equation}

Our first main theorem is concerned with the solvability of (\ref{Abreu2})-(\ref{fdel}) in two dimensions.
\begin{thm}[Solvability of highly singular second boundary value problems of Abreu type]
\label{SBV1}
Let $n=2$ and $0\leq \theta<1/n$.
Let $\delta>0$ and let $\Omega_0,\Omega$ be open, smooth, bounded, convex domains in $\R^n$ such that $\Omega_0\subset\subset \Omega$. 
Assume moreover that $\Omega$ is uniformly convex. 
Assume that $\varphi\in C^{3,1}(\overline{\Omega})$ and $\psi\in C^{1,1}(\overline{\Omega})$ with $\inf_{\p \Omega}\psi>0$. Assume that (\ref{AsF0})-(\ref{AsH1}) are satisfied. 
\begin{myindentpar}{1cm} 
(i) If either  $\min\{c_0, \bar c_0\}$ is sufficiently small (depending only on $\inf_{\p \Omega}\psi$, $\Omega_0$ and $\Omega$), or $\min\{\rho,\frac{1}{\delta}\}$ is sufficiently large (depending only on $\min\{c_0,\bar c_0\},\Omega_0$ and $\Omega$),  then
there is a uniformly convex solution $u \in W^{4,q}(\Omega)$ to the system (\ref{Abreu2})-(\ref{fdel}) for all $q\in (n,\infty)$.\\
 (ii) if $\bar c_0=\bar C_\ast =0$, then there is a unique uniformly convex solution $u \in W^{4,q}(\Omega)$ to the system (\ref{Abreu2})-(\ref{fdel}) for all $q\in (n,\infty)$.
\end{myindentpar}
\end{thm}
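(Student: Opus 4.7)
My plan is to establish uniform a priori $W^{4,q}$ bounds on all uniformly convex solutions of (\ref{Abreu2})-(\ref{fdel}) and then extract existence by a Leray-Schauder degree / continuity-method argument along the homotopy $t\mapsto tF^1$, $t\in[0,1]$, whose $t=0$ endpoint is the Carlier-Radice problem (\ref{crp}) already solved in \cite{CR}. The dimension restriction $n=2$ enters only at one point in the bootstrap, through the integrability threshold $q>n/2$ of the global H\"older estimate of \cite{LN}; everything else in the plan is dimension-agnostic.

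\textbf{Step 1: a priori two-sided bounds on $\det D^2 u$.} I expect this to be the main obstacle, since the right-hand side $f_\delta$ is a priori just a measure through the quasilinear term $-\div(\nabla_p F^1(\cdot,Du))$. The idea is an integral identity obtained by testing $U^{ij}w_{ij}=f_\delta$ against $u-\varphi$ and integrating by parts twice, using that $(U^{ij})$ is divergence-free; one integration by parts rewrites the quasilinear interior term as $\int_{\Omega_0}\nabla_p F^1(\cdot,Du)\cdot D(u-\varphi)\,dx$ plus a boundary contribution on $\p\Omega_0$. Hypothesis (\ref{AsH}) controls the interior piece by $c_0$ times a quadratic quantity in $Du$ plus $C_\ast$-terms, hypothesis (\ref{AsH1}) controls the boundary piece by $\bar c_0\|Du\|_{L^\infty}+\bar C_\ast$, while the monotonicity $\rho$ from (\ref{AsF0}) and the $\delta^{-1}$-penalization on $\Omega\setminus\Omega_0$ supply coercivity. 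The smallness/largeness dichotomy in (i) is precisely what is needed for coercivity to absorb the bad terms, giving $L^\infty$ bounds for $w=(\det D^2 u)^{\theta-1}$; combined with $w=\psi>0$ on $\p\Omega$ and a maximum principle applied to $U^{ij}w_{ij}=f_\delta$, this yields positive lower and upper bounds $\lambda_1\le \det D^2 u\le \lambda_2$.

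\textbf{Step 2: bootstrap to $W^{4,q}$.} Once $\det D^2 u$ is pinched between positive constants, the interior $W^{2,1+\eps_0}$ estimates of De Philippis-Figalli-Savin, Schmidt and Savin give $\Delta u\in L^{1+\eps_0}(\Omega)$ for some $\eps_0>0$, whence $f_\delta\in L^{1+\eps_0}(\Omega)$. Because $n=2$ we have $1+\eps_0>n/2=1$, so the global H\"older estimate of \cite{LN} applied to $U^{ij}w_{ij}=f_\delta$ with boundary datum $\psi$ produces $w\in C^\alpha(\overline{\Omega})$. Then $\det D^2 u=w^{1/(\theta-1)}\in C^\alpha(\overline{\Omega})$ is H\"older with two-sided positive bounds, and the global $C^{2,\alpha}$ theory of \cite{S,TW08} yields $u\in C^{2,\alpha}(\overline{\Omega})$. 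Returning to $U^{ij}w_{ij}=f_\delta$, the coefficients $(U^{ij})$ are now uniformly elliptic and $C^\alpha$, and $f_\delta$ is bounded, so standard $L^p$ theory gives $w\in W^{2,q}(\Omega)$ and the Monge-Amp\`ere equation then gives $u\in W^{4,q}(\Omega)$ for every $q<\infty$. With this uniform estimate at hand, existence in (i) follows from the Leray-Schauder degree argument along $t\mapsto tF^1$, the $t=0$ endpoint being covered by \cite{CR}.

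\textbf{Step 3: uniqueness in (ii).} When $\bar c_0=\bar C_\ast=0$, (\ref{AsH1}) forces $\nabla_p F^1(x,p)\equiv 0$ for $x\in\p\Omega_0$, so the boundary integrals on $\p\Omega_0$ arising in the comparison of two solutions disappear. Given two uniformly convex $W^{4,q}$ solutions $u,\tilde u$, subtracting the equations, testing against $u-\tilde u$, and integrating by parts turns the quasilinear contribution into $\int_{\Omega_0}(\nabla_p F^1(\cdot,Du)-\nabla_p F^1(\cdot,D\tilde u))\cdot D(u-\tilde u)\,dx\ge 0$ by convexity of $F^1$ in $p$; the $f^0$-contribution is nonnegative by (\ref{AsF0}) and the penalization contribution on $\Omega\setminus\Omega_0$ is nonnegative, while the remaining determinant-type terms have a favorable sign by concavity of $d\mapsto d^\theta/\theta$ (or $\log d$ when $\theta=0$) on the uniformly convex cone. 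Together these force $u\equiv\tilde u$.
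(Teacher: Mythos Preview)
Your Steps~2 and~3 are correct and match the paper (the bootstrap in Theorem~\ref{thmw4p} and the uniqueness argument in Lemma~\ref{uni_lem}, respectively); the degree-theory homotopy you propose differs from the paper's (which uses right-hand side $t f_\delta$ and boundary data $t\psi+(1-t)$, so that $t=0$ is the trivial fixed point $w\equiv1$ rather than a Carlier--Radice problem), but either route works.

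Step~1 has a genuine gap. Testing $U^{ij}w_{ij}=f_\delta$ against $u-\varphi$ and integrating by parts does \emph{not} give an $L^\infty$ bound on $w$: after two integrations by parts the bulk term is $\int_\Omega w\,U^{ij}(u-\varphi)_{ij}$, from which no pointwise information on $w$ is available, and the piece $\int_\Omega w\,U^{ij}\varphi_{ij}$ cannot be controlled without a priori Hessian bounds on $u$. The paper's argument (Lemma~\ref{ubound}) instead exploits the concavity of $d\mapsto G(d^n)$ together with the concavity of $M\mapsto(\det M)^{1/n}$ to obtain the one-sided inequality
\[
0\ \ge\ \int_\Omega \bigl(wU^{ij}(u-\tilde u)_{ij}+\tilde w\,\tilde U^{ij}(\tilde u-u)_{ij}\bigr),
\]
where $\tilde u$ is an auxiliary uniformly convex function built in Lemma~\ref{geoc} with $\tilde u=\varphi$ on $\p\Omega$ and $\det D^2\tilde u\ge c>0$; you cannot take $\tilde u=\varphi$ here, since in Theorem~\ref{SBV1} the datum $\varphi$ is not assumed uniformly convex. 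Integrating the displayed inequality by parts produces the boundary term $\int_{\p\Omega}\psi(U^{\nu\nu}-\tilde U^{\nu\nu})(u_\nu-\tilde u_\nu)$, which, via $U^{\nu\nu}=Ku_\nu^{n-1}+\text{lower order}$, controls $\int_{\p\Omega}(u_\nu^+)^n$ and hence $\|u\|_{L^\infty(\Omega)}$ --- not $w$. Only \emph{after} $\|u\|_{L^\infty}$ is known does one obtain the two-sided Hessian-determinant bounds, by two further barrier/ABP steps (Lemma~\ref{upwlem}): an ABP estimate for $w+C_\ast|x|^2$, using $F^1_{p_ip_j}u_{ij}\le C_\ast\Delta u$ and $\trace U=\Delta u$ in dimension $2$, yields $w\le C$; a maximum principle for $\log(w+\eps)-Mu$, with $M$ chosen from the now-finite $\|f_\delta^+\|_{L^\infty}$, yields $w\ge c>0$. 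Your proposed ``maximum principle applied to $U^{ij}w_{ij}=f_\delta$'' cannot work directly, because $f_\delta$ contains the a priori unbounded second-order term $-F^1_{p_ip_j}(x,Du)u_{ij}$.
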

Theorem \ref{SBV1} will be proved in Section \ref{pf_sec}. The existence proof uses a priori estimates in Theorem \ref{thmw4p} and degree theory. For the a priori estimates, the technical size conditions in (i) guarantee the uniform bound for $u$ and the $L^2$ bound for its gradient $Du$ in terms of the data of the problem.
\begin{rem}
\label{pRC_rem}
Consider the perturbed Rochet-Chon\'e model $ F(x, z,p)= F^0(x, z) + F^1(x, p)$ where $$F^0(x, z)=\gamma (x) z + \frac{\rho}{2} |z|^2,~F^1(x, p)=\frac{1}{2}\gamma(x)|p|^2- x\cdot p\gamma(x)$$
where $\rho\geq 0$ is a constant and $\gamma$ is a Lipschitz function satisfying
$0< \gamma\leq C_1, |D \gamma|\leq C_2~\text{in }\Omega_0.$
Then (\ref{AsF0})-(\ref{AsH1}) are satisfied with suitable constants $c_0, \bar c_0, C_\ast,\bar C_\ast$. If $\gamma=0$ on $\p\Omega_0$ then $\bar c_0=\bar C_\ast =0.$ More generally, if $\max_{\p\Omega}\gamma(x)$ is small  then $\bar c_0$ is small. If $\|D \gamma\|_{L^{\infty}(\Omega_0)}$ is small then $c_0$ is small.
\end{rem}
Our second main theorem asserts the convergence of solutions to (\ref{Abreu2})-(\ref{fdel}) in two dimensions  to the unique minimizer of (\ref{minp3})-(\ref{J_def}) when the Lagrangian $F(x, z, p)= F^0(x, z) + F^1(x,p)$ is highly convex in the second variable.

\begin{thm}[Convergence of solutions of the approximate second boundary value problems of Abreu type to the minimizer of the convex functional]
\label{SBV2}
Let $n=2$ and $0\leq \theta<1/n$.
Let $\Omega_0,\Omega$ be open, smooth, bounded, convex domains in $\R^n$ such that $\Omega_0\subset\subset \Omega$. 
Moreover, assume that $\Omega$ is uniformly convex. 
Assume that $\varphi\in C^{3,1}(\overline{\Omega})$ is uniformly convex with $\inf_{\Omega}\det D^2\varphi >0$
and $\psi\in C^{1,1}(\overline{\Omega})$ with $\inf_{\p \Omega}\psi>0$. 
Assume that (\ref{AsF0})-(\ref{AsH1}) are satisfied, and $\rho>0$.  For each $\e>0$,
 consider the following second boundary value problem:
\begin{equation}
  \left\{ 
  \begin{alignedat}{2}\e\sum_{i, j=1}^{n}U_\e^{ij}(w_\e)_{ij}~& =f_{\e}(\cdot, u_\e, Du_\e, D^2 u_\e)~&&\text{in} ~\Omega, \\\
 w_\e~&= (\det D^2 u_\e)^{\theta-1}~&&\text{in}~ \Omega,\\\
u_\e ~&=\varphi~&&\text{on}~\p \Omega,\\\
w_\e ~&= \psi~&&\text{on}~\p \Omega.
\end{alignedat}
\right.
\label{Abreu2e}
\end{equation}
Here $U_\e=(U_\e^{ij})_{1\leq i, j\leq n}$ is the cofactor matrix of the Hessian matrix $D^2 u_\e=((u_\e)_{ij})$ and
\begin{equation}
\label{fdele}
f_{\e}(x, u_\e(x), Du_\e(x), D^2 u_\e(x)) = \left\{\begin{array}{rl}
  f^0(x, u_\e(x)) - \div (\nabla_p F^1(x, D u_\e(x)))&  x\in \Omega_0,\\
\frac{1}{\e}(u_\e (x)-\varphi(x) ) & x\in  \Omega\setminus \Omega_0.
\end{array}\right.
\end{equation}
Assume that either $\bar c_0=\bar C_\ast =0$ or $\rho$ is sufficiently large (depending only on $\bar c_0 +\bar C_\ast$, $\Omega_0$ and $\Omega$). 
Let $u_\e$ be a uniformly convex solution $u_\e \in W^{4,q}(\Omega)$ to the system (\ref{Abreu2e})-(\ref{fdele}) for all $q\in (n,\infty)$.
Then, $u_\e$ converges uniformly on compact subsets of $\Omega$ to the unique minimizer $u\in \bar{S}[\varphi,\Omega_0]$ (defined in (\ref{barS})) of the problem (\ref{minp3}) where $J$ is defined by (\ref{J_def}).
\end{thm}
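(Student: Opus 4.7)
The plan is to establish the theorem in four logical steps: (a) uniqueness of the $J$-minimizer on $\bar S[\varphi, \Omega_0]$; (b) derivation of an $\e$-uniform energy inequality for $u_\e$; (c) extraction of a subsequential limit and identification with the minimizer; (d) full convergence. For (a), since $\rho>0$ makes $z\mapsto F^0(x,z)$ strictly convex and $F^1$ is convex in $p$, the functional $J$ is strictly convex on the convex admissible set $\bar S[\varphi,\Omega_0]$, so the minimizer $u$ (whose existence follows from the direct method together with local Lipschitz control of convex functions) is unique.

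The heart of the argument is (b). I would first construct a smooth competitor: extend the minimizer $u$ convexly to $\Omega$ by setting $\hat u = u$ on $\Omega_0$ and $\hat u = \varphi$ on $\Omega\setminus\Omega_0$ (convex by the defining property of $\bar S[\varphi,\Omega_0]$), and regularize via mollification plus a $\tfrac{\sigma}{2}|x|^2$ perturbation to obtain a smooth, uniformly convex $\hat u_\sigma$ with $\hat u_\sigma = \varphi$ on $\Omega\setminus\Omega_0$ and $\hat u_\sigma \to u$ uniformly on $\overline{\Omega_0}$; the hypothesis $\inf_\Omega \det D^2\varphi > 0$ ensures this smoothing preserves uniform convexity. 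Multiply the first equation in (\ref{Abreu2e}) by $u_\e - \hat u_\sigma$ and integrate by parts twice: the divergence-free property of $U_\e$ rewrites the left-hand side as $\e\int_\Omega (\det D^2 u_\e)^\theta\, u_\e^{ij}(u_\e - \hat u_\sigma)_{ij}\,dx$ plus a boundary integral on $\p\Omega$ controlled via $w_\e = \psi$, $u_\e = \hat u_\sigma = \varphi$, and the uniform convexity of $\Omega$ and $\varphi$. On the right-hand side, integrating the $-\div(\nabla_p F^1)$ portion of $f_\e$ by parts in $\Omega_0$ and applying the convexity inequalities
\[ f^0(x, u_\e)(u_\e - \hat u_\sigma) \geq F^0(x, u_\e) - F^0(x, \hat u_\sigma), \quad \nabla_p F^1(x, Du_\e)\cdot D(u_\e - \hat u_\sigma) \geq F^1(x, Du_\e) - F^1(x, D\hat u_\sigma), \]
reassembles $\int_\Omega f_\e(u_\e - \hat u_\sigma)\,dx$ into a lower bound $J(u_\e) - J(\hat u_\sigma) - I_\e + \tfrac{1}{\e}\int_{\Omega\setminus\Omega_0}(u_\e-\varphi)^2\,dx$, where
\[ I_\e := \int_{\p\Omega_0}\bigl(\nabla_p F^1(\cdot, Du_\e)\cdot\nu_0\bigr)(u_\e - \hat u_\sigma)\,d\sigma \]
is the interface contribution produced by the jump of $f_\e$ across $\p\Omega_0$. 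Bounding the resulting volume term $\e\int_\Omega (\det D^2 u_\e)^\theta u_\e^{ij}(u_\e - \hat u_\sigma)_{ij}\,dx$ by $C_\sigma\e$ via the a priori bound $\det D^2 u_\e \leq C$ from Theorem \ref{SBV1} together with $u_\e^{ij}(\hat u_\sigma)_{ij} \geq 0$ yields
\[ J(u_\e) + \tfrac{1}{\e}\int_{\Omega\setminus\Omega_0}(u_\e-\varphi)^2\,dx \leq J(\hat u_\sigma) + I_\e + C_\sigma \e. \]

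For (c), convexity together with $u_\e = \varphi$ on $\p\Omega$ gives $u_\e\leq\max_{\p\Omega}\varphi$, and combined with the above energy inequality the coercivity provided by $\rho>0$ furnishes an $L^2$ lower bound on $u_\e$ over $\Omega_0$. Hence $\{u_\e\}$ is locally equi-bounded, and by convexity locally equi-Lipschitz on $\Omega$, so a subsequence $u_{\e_k}$ converges uniformly on compact subsets of $\Omega$ to a convex $u^*$. The penalty estimate then forces $u^* = \varphi$ on $\Omega\setminus\Omega_0$, so $u^* \in \bar S[\varphi,\Omega_0]$. Using a.e.\ convergence of gradients of convex functions together with convexity of $F$ in $(z,p)$, $J$ is lower semicontinuous along the subsequence, so $J(u^*)\leq\liminf_k J(u_{\e_k})\leq J(\hat u_\sigma) + o(1)$; sending $\sigma \to 0$ and using continuity of $J$ along $\hat u_\sigma \to u$ on $\overline{\Omega_0}$ yields $J(u^*)\leq J(u)$, so by uniqueness $u^* = u$. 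Since every subsequence of $\{u_\e\}$ has a sub-sub-sequence converging to $u$, the full family converges uniformly on compact subsets of $\Omega$.

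The principal obstacle is controlling the interface term $I_\e$ uniformly in $\e$. This is precisely where the two branches of the hypothesis enter: when $\bar c_0 = \bar C_\ast = 0$, the integrand of $I_\e$ vanishes identically by (\ref{AsH1}); otherwise, $|I_\e|$ is dominated by $\int_{\p\Omega_0}(\bar c_0|Du_\e| + \bar C_\ast)|u_\e-\hat u_\sigma|\,d\sigma$, and, after a trace estimate on $u_\e$, this boundary integral is absorbed into the coercive $\rho\int_{\Omega_0} u_\e^2\,dx$ term inside $J(u_\e)$ provided $\rho$ is sufficiently large in terms of $\bar c_0 + \bar C_\ast$, $\Omega_0$, and $\Omega$—exactly the quantitative condition stated in the theorem, paralleling the dichotomy already present in Theorem \ref{SBV1}.
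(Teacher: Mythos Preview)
Your overall architecture matches the paper's, but step (b) contains a genuine gap. You bound the volume term
\[
\e\int_\Omega (\det D^2 u_\e)^\theta\, u_\e^{ij}(u_\e - \hat u_\sigma)_{ij}\,dx
\]
by $C_\sigma\e$ ``via the a priori bound $\det D^2 u_\e \leq C$ from Theorem~\ref{SBV1}''. That bound is \emph{not} uniform in $\e$: the constants in Theorem~\ref{thmw4p} (on which Theorem~\ref{SBV1} rests) are explicitly declared in Remark~\ref{uc_rem} to depend on $\delta$, which in the present setting is $\e$. So you cannot conclude the volume term is $O(\e)$ this way. Relatedly, your treatment of the $\p\Omega$ boundary term as merely ``controlled'' skips the point: that term is not a priori small but carries the \emph{favorable sign} $-c\e\int_{\p\Omega}(u_\e)_\nu^2$ (from $U_\e^{\nu\nu}=K(u_\e)_\nu+O(1)$ and $\inf K\psi>0$), and this coercive piece is what ultimately closes the estimate.

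The paper supplies the missing ingredient in Lemma~\ref{lem_uep}, which uses the concavity of $G(d)=d^\theta/\theta$ (via the two-sided inequality of the type (\ref{Ga})--(\ref{Gb})) to avoid estimating the bare volume term, and produces the $\e$-independent bound $\e\int_{\p\Omega}(u_\e)_\nu^2+\rho\int_{\Omega_0}|u_\e-\varphi|^2+\e^{-1}\int_{\Omega\setminus\Omega_0}|u_\e-\varphi|^2\le C$. From this, the paper controls $\e\int_\Omega G(\det D^2 u_\e)$ not through a Hessian-determinant bound but through $G(d)\le C(1+d^{1/2})\le C(1+\Delta u_\e)$ and the divergence theorem, yielding the $O(\e^{1/2})$ decay. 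Your argument can be repaired along the same lines: keep the good sign of the $\p\Omega$ term, replace your appeal to Theorem~\ref{SBV1} by $(\det D^2 u_\e)^\theta\le 1+\Delta u_\e$, and absorb the resulting $\e\int_{\p\Omega}(u_\e)_\nu$ into $c\e\int_{\p\Omega}(u_\e)_\nu^2$ by Young's inequality. A smaller technical point: after mollification and adding $\tfrac{\sigma}{2}|x|^2$ you cannot have $\hat u_\sigma=\varphi$ on $\Omega\setminus\Omega_0$ exactly; the paper instead uses competitors of the form $v_\e=(1-\e)v+\e\varphi$, which keeps the boundary data exact and makes $\det D^2 v_\e\ge \e^2\inf\det D^2\varphi>0$ transparent.
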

Theorem \ref{SBV2} will be proved in Section \ref{pf_sec}.
\begin{rem}
For the convergence result in Theorem \ref{SBV2}, we need to establish a uniform bound for $u_\e$ independent of $\e$; see Lemma \ref{lem_uep}. For this, the uniform convexity of $\varphi$ plays an important role.
On the other hand, in Theorem \ref{SBV1}, we basically use the boundary value of $\varphi$ on $\p\Omega$, and therefore $\varphi$ need not be uniformly convex.
\end{rem}
\begin{rem}  Several pertinent remarks on Theorem \ref{SBV2} are in order.
\begin{myindentpar}{1cm}
(i) Theorem \ref{SBV2} is applicable to the perturbed Rochet-Chon\'e model considered in Remark \ref{pRC_rem}. Theorem \ref{SBV2} implies that
minimizers of the 2D Rochet-Chon\'e model perturbed by a highly convex lower order term, under a convexity constraint, can be approximated in the uniform norm by solutions of second boundary value problems of Abreu type equations.\\
(ii) The minimization problem  (\ref{minp3})-(\ref{J_def}) when $F^1(x,p)=\frac{1}{2}\gamma(x) |p|^2-x\cdot p\gamma(x)$ with $\gamma(x)=0$ on $\p\Omega_0$ (that is, $\bar c_0=\bar C_\ast =0$) was studied by Carlier in \cite{Car1}. \\
(iii) In Theorem \ref{SBV2}, when $\rho>0$ is large and $\bar c_0+ \bar C_\ast>0$, we are unable to prove the uniqueness of uniformly convex solutions $u_\e$ to  (\ref{Abreu2e})-(\ref{fdele}). Despite this lack of uniqueness, Theorem 
\ref{SBV2} says that we have the full convergence of all solutions $u_\e$ to the unique minimizer $u\in \bar{S}[\varphi,\Omega_0]$ of the problem (\ref{minp3})-(\ref{J_def}). This is surprising to us.
\end{myindentpar}
\end{rem}

In Theorem \ref{SBV1}, the function $F^1(x, p)$ grows at most quadratically in $p$. The following extension deals with more general Lagrangian $F$.
\begin{thm}
\label{SBV3}
Let $\Omega\subset\R^2$ be an open, smooth, bounded and uniformly convex domain.
Assume that $\varphi\in C^{\infty}(\overline{\Omega})$ and $\psi\in C^{\infty}(\overline{\Omega})$ with $\inf_{\p \Omega}\psi>0$. 
Let $F(p):\R^2\rightarrow \R$  and $H:(0,\infty)\rightarrow (0,\infty)$ be smooth.
Consider the following
second boundary value problem of a fourth order equation of Abreu type for a  uniformly convex function $u$:
\begin{equation}
  \left\{ 
  \begin{alignedat}{2}\sum_{i, j=1}^{2}U^{ij}w_{ij}~& =-\div (\nabla_p F(Du))~&&\text{in} ~\Omega, \\\
 w~&= H(\det D^2 u)~&&\text{in}~ \Omega,\\\
u ~&=\varphi~&&\text{on}~\p \Omega,\\\
w ~&= \psi~&&\text{on}~\p \Omega.
\end{alignedat}
\right.
\label{Abreu3}
\end{equation}
\begin{myindentpar}{1cm}
(i) Assume that $F$ is convex, and that  $F_{p_i p_j} (p)$ is bounded for $p$ bounded. Assume that $H$ is
strictly decreasing, $H(d)\rightarrow 0$ when $d\rightarrow\infty$ and $H(d)\rightarrow \infty$ when $d\rightarrow 0$.
Then there exists a smooth,  uniformly convex solution $u\in C^{\infty}(\overline{\Omega})$ to (\ref{Abreu3}). If $H(d)= d^{\theta-1}$ where $0\leq \theta<1/2$ then the
 solution is unique.\\
(ii) Assume that $0\leq F_{p_i p_j}(p)\leq C_\ast I_2$. Assume that $H$ is strictly monotone and that $H^{-1}$ maps compact subsets of $(0, \infty)$ into compact subsets of $(0,\infty)$.
Then there exists a  smooth,  uniformly convex solution $u\in C^{\infty}(\overline{\Omega})$ to (\ref{Abreu3}).
\end{myindentpar}
\end{thm}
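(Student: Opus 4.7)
The plan is to prove Theorem \ref{SBV3} by combining a priori estimates tailored to the two-dimensional structure with a Leray-Schauder degree argument, in the same spirit as the proof sketched for Theorem \ref{SBV1}, and then to deduce uniqueness in the special case $H(d)=d^{\theta-1}$, $0\leq\theta<1/2$, from a convex variational structure. Throughout I would use crucially the 2D identity $\trace(U)=u_{11}+u_{22}=\Delta u$ that the author himself highlights.

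The decisive step is the a priori two-sided pinching of $\det D^2 u$ between positive constants. Because $F$ is convex and $D^2u\succeq 0$, we have $-\div(\nabla_p F(Du))=-F_{p_ip_j}(Du)u_{ij}\leq 0$, so $U^{ij}w_{ij}\leq 0$, and the Aleksandrov-Bakelman-Pucci maximum principle together with $w=\psi>0$ on $\p\Omega$ give $w\geq \inf_{\p\Omega}\psi>0$. In Part (i), where $H$ is strictly decreasing with $H(d)\to\infty$ as $d\to 0^+$, this immediately yields an upper bound $\det D^2 u\leq C$; then Aleksandrov's maximum principle for the Monge-Amp\`ere equation gives $\|u\|_{C^0(\overline{\Omega})}\leq C$, and convexity of $u$ combined with smoothness of $\varphi$ gives $\|Du\|_{L^{\infty}(\Omega)}\leq C$, which in turn forces $F_{p_ip_j}(Du)\leq C_\ast I_2$ by the local boundedness assumption. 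In Part (ii) this pointwise bound is built into the hypothesis. Armed with $F_{p_ip_j}(Du)\leq C_\ast I_2$, the 2D identity yields $U^{ij}w_{ij}\geq -C_\ast\Delta u=-C_\ast \trace(U)$, so the quadratic barrier $\tilde w:=w+\tfrac{C_\ast}{2}|x|^2$ satisfies $U^{ij}\tilde w_{ij}\geq 0$ and the maximum principle produces $w\leq \max_{\p\Omega}\psi+C$. Combining the two-sided bounds on $w$ with the hypothesis that $H^{-1}$ maps compacta of $(0,\infty)$ into compacta of $(0,\infty)$ gives $\lambda_1\leq\det D^2 u\leq \lambda_2$ for positive constants.

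Once $\det D^2 u$ is pinched, the remainder of the a priori estimates proceeds exactly as in the introductory outline for Theorem \ref{SBV1}. The De Philippis-Figalli-Savin / Schmidt / Savin $W^{2,1+\e_0}$ estimates give $\Delta u\in L^{1+\e_0}(\Omega)$ for some $\e_0>0$, hence the right hand side $-F_{p_ip_j}(Du)u_{ij}$ of the linearized Monge-Amp\`ere equation lies in $L^{1+\e_0}$; since $n=2$ we have $1+\e_0>n/2=1$, so the global H\"older estimate of Le-Nguyen \cite{LN} gives $w\in C^{\alpha}(\overline\Omega)$. Caffarelli-Trudinger-Wang-Savin global $C^{2,\alpha}$ theory then gives $u\in C^{2,\alpha}(\overline\Omega)$, the equation for $w$ becomes uniformly elliptic with H\"older coefficients and bounded right hand side, and a Schauder bootstrap upgrades both $u$ and $w$ to $C^{\infty}(\overline\Omega)$. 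Existence then follows by embedding the problem in a continuous one-parameter family (for instance scaling $F\rightsquigarrow tF$ and, if needed, interpolating $H$ toward a model $H(d)=d^{\theta-1}$ with $\theta\in[0,1/2)$ already covered by Theorem \ref{SBV1}), checking the a priori estimates are uniform in the deformation parameter, and invoking Leray-Schauder degree theory in precisely the manner used for Theorem \ref{SBV1}. Uniqueness when $H(d)=d^{\theta-1}$, $0\leq\theta<1/2=1/n$, would be obtained by noting that the system is the Euler-Lagrange equation of
\[
\mathcal J(u)=\int_\Omega F(Du)\,dx-\int_\Omega G(\det D^2 u)\,dx,
\]
with $G(d)=\log d$ for $\theta=0$ and $G(d)=\theta^{-1}d^{\theta}$ for $0<\theta<1/n$. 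Convexity of $F$ together with concavity of $u\mapsto \int G(\det D^2 u)$ on uniformly convex functions (the Minkowski determinant / Brunn-Minkowski inequality, valid precisely for $\theta\leq 1/n$) makes $\mathcal J$ strictly convex on the admissible class, so two smooth uniformly convex solutions must coincide.

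The main obstacle is the upper bound on $w$: it is exactly there that $\trace(U)=\Delta u$ enters essentially, and the failure of such an identity in higher dimensions is precisely what confines the result to $n=2$. A secondary subtlety arises in Part (ii), where $H$ may be increasing, so a one-sided bound on $w$ no longer controls $\det D^2 u$ in the direction one needs; here one has to carry the two-sided pinch on $w$ all the way through and exploit the compact-to-compact property of $H^{-1}$ to extract the positive two-sided pinch on $\det D^2 u$ required before Caffarelli's regularity can be applied.
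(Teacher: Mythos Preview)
Your proposal is correct and follows essentially the same route as the paper: the lower bound $w\geq\inf_{\p\Omega}\psi$ from $U^{ij}w_{ij}\leq 0$, the upper bound on $w$ via the 2D barrier $w+C|x|^2$ exploiting $\trace(U)=\Delta u$, the resulting pinch on $\det D^2 u$, and then the $W^{2,1+\e_0}$/Le--Nguyen H\"older/$C^{2,\alpha}$/bootstrap chain together with degree theory. Two small points of precision are worth tightening. First, in part~(i) the global bound $\|Du\|_{L^\infty(\Omega)}\leq C$ does not follow from convexity of $u$ and smoothness of $\varphi$ alone; one needs a barrier at $\p\Omega$ built from $\det D^2 u\leq C$ and the uniform convexity of $\Omega$ (exactly as the paper does) to control $u_\nu$ from above. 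Second, your uniqueness sketch via strict convexity of $\mathcal J$ is not quite the right framing, because the boundary condition $w=\psi$ is imposed rather than natural for $\mathcal J$, so two solutions of the second boundary value problem are not automatically critical points of $\mathcal J$ over $\{u=\varphi~\text{on}~\p\Omega\}$; the paper instead reruns the Lemma~\ref{uni_lem} computation, where integration by parts produces the boundary term $\int_{\p\Omega}\psi(U^{\nu\nu}-\hat U^{\nu\nu})(u_\nu-\hat u_\nu)$, which has the correct sign by the curvature identity (\ref{uij1}), and the strict concavity of $G$ then forces $\det D^2 u=\det D^2\hat u$.
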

Theorem \ref{SBV3} will be proved in Section \ref{pf_sec}.
\begin{rem} Examples of Lagrangians $F$ in Theorem \ref{SBV3} (i) include
$$F(p)= \frac{1}{s} |p|^{s} ~(s\geq 2, s~\text{integer}),~\text{or}~F(p)= e^{\frac{1}{2}|p|^2}.$$
 \end{rem}
 The existence results in Theorem \ref{SBV1} and \ref{SBV3} can be extended to certain non-convex Lagrangians $F$. To illustrate the scope of our method, we consider the case of Lagrangian
 $$F(x, z, p)=\frac{1}{4}(z^2-1)^2+ \frac{1}{2}|p|^2$$ arising from the study of Allen-Cahn functionals. Our existence result for the singular Abreu equation with Allen-Cahn Lagrangian states as follows.
 \begin{thm}
\label{SBV4}
Let $\Omega\subset\R^2$ be an open, smooth, bounded and uniformly convex domain. 
Assume that $\varphi\in C^{\infty}(\overline{\Omega})$ and $\psi\in C^{\infty}(\overline{\Omega})$ with $\inf_{\p \Omega}\psi>0$.
Then there exists a  smooth,  uniformly convex solution $u\in C^{\infty}(\overline{\Omega})$ to the following
second boundary value problem:
\begin{equation}
\label{Abreu4}
  \left\{ 
  \begin{alignedat}{2}\sum_{i, j=1}^{2}U^{ij}w_{ij}~& =u^3-u-\Delta u~&&\text{in} ~\Omega, \\\
 w~&= (\det D^2 u)^{-1}~&&\text{in}~ \Omega,\\\
u ~&=\varphi~&&\text{on}~\p \Omega,\\\
w ~&= \psi~&&\text{on}~\p \Omega.
\end{alignedat}
\right.
\end{equation}
\end{thm}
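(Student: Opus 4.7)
The Allen--Cahn Lagrangian $F(x,z,p)=\tfrac14(z^2-1)^2+\tfrac12|p|^2$ gives a non-monotone $f^0(z)=z^3-z$, so Theorem \ref{SBV1} does not apply directly; on the other hand $F^1(p)=\tfrac12|p|^2$ verifies \eqref{AsH}--\eqref{AsH1} with $C_\ast=1$, $c_0=0$, $\bar c_0=1$, $\bar C_\ast=0$. The plan is to replace the monotonicity hypothesis \eqref{AsF0}---whose only role in the proof of Theorem \ref{SBV1} is to produce an a priori $L^\infty$ bound for $u$---by an energy argument exploiting the superlinear $u^3$. The rest of the analysis then falls into the framework of Theorem \ref{SBV1}, and existence is obtained by Leray--Schauder degree along a homotopy whose endpoint is covered by Theorem \ref{SBV3}(i).

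\emph{Step 1: a priori $L^\infty$ bound for $u$.} Let $\tilde\varphi$ be a uniformly convex smooth extension of $\varphi|_{\partial\Omega}$ to $\overline\Omega$ (e.g.\ the Monge--Amp\`ere solution with a large constant right-hand side). Multiply the first equation in \eqref{Abreu4} by $u-\tilde\varphi$ (which vanishes on $\partial\Omega$) and integrate over $\Omega$. Using $\partial_i U^{ij}=0$, so that $U^{ij}w_{ij}=\partial_i\partial_j u^{ij}$, and integrating by parts twice converts the left-hand side into
\[
-n|\Omega|+\int_\Omega u^{ij}\tilde\varphi_{ij}\,dx+\int_{\partial\Omega}\psi(u-\tilde\varphi)_\nu\,u_{\tau\tau}\,dS.
\]
Since $\tilde\varphi$ is uniformly convex, $u^{ij}\tilde\varphi_{ij}\ge 0$, so this middle term has the favorable sign. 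The boundary identity $u_{\tau\tau}|_{\partial\Omega}=\tilde\varphi_{\tau\tau}+\kappa(u-\tilde\varphi)_\nu$, where $\kappa\ge\kappa_0>0$ is the curvature of the uniformly convex $\partial\Omega$, produces a positive quadratic boundary contribution $\int_{\partial\Omega}\psi\kappa[(u-\tilde\varphi)_\nu]^2\,dS$ that absorbs the cross term $\int\psi\tilde\varphi_{\tau\tau}(u-\tilde\varphi)_\nu\,dS$ via Young. After integrating the $-\Delta u$ term by parts and using $(u-\tilde\varphi)(u^3-u)\ge u^4-u^2-C|\tilde\varphi|(|u|^3+|u|)$, Young's inequality and $u^4\ge u^2-\tfrac14$ give
\[
\|u\|_{L^4(\Omega)}^4+\|\nabla u\|_{L^2(\Omega)}^2\le C(\varphi,\psi,\Omega).
\]
Convexity of $u$ yields $u\le\max_{\partial\Omega}\varphi$. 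For the lower bound, let $x_0$ be an interior minimizer with $u(x_0)=-A$; along every ray $x_0+t\xi$, $\xi\in S^1$, the scalar function $t\mapsto\nabla u(x_0+t\xi)\cdot\xi$ is non-decreasing (by convexity) and integrates to $\varphi(y(\xi))+A$. A weighted Cauchy--Schwarz for non-decreasing non-negative functions yields $\varphi(y(\xi))+A\le C\bigl(\int_0^{T(\xi)}|\nabla u|^2\,t\,dt\bigr)^{1/2}$, and integrating in $\xi$ in polar coordinates centered at $x_0$ gives $A\le C\|\nabla u\|_{L^2(\Omega)}+\|\varphi\|_{L^\infty(\partial\Omega)}$. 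Hence $\|u\|_{L^\infty(\Omega)}\le M$.

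\emph{Step 2: higher regularity and existence.} With $\|u\|_{L^\infty}\le M$, the source $f^0(u)=u^3-u$ is a priori bounded, and every subsequent a priori estimate of Theorem \ref{SBV1} applies verbatim: Hessian determinant bounds $\lambda_1\le\det D^2u\le\lambda_2$; $\Delta u\in L^{1+\e_0}$ via De~Philippis--Figalli--Savin, Schmidt and Savin; $w\in C^{0,\alpha}(\overline\Omega)$ via the Le--Nguyen linearized Monge--Amp\`ere estimate \cite{LN} (applicable because $1+\e_0>n/2=1$ when $n=2$); $u\in C^{2,\alpha}(\overline\Omega)$ by Savin's boundary theory; and $u\in W^{4,q}\cap C^\infty(\overline\Omega)$ by Schauder bootstrap. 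For existence, I would use the homotopy
\[
U^{ij}w_{ij}=\sigma(u^3-u)-\Delta u,\qquad\sigma\in[0,1],
\]
with the same second, third and fourth boundary conditions as in \eqref{Abreu4}. At $\sigma=0$ this reduces to \eqref{Abreu_del}, which is solvable by Theorem \ref{SBV3}(i) with $F(p)=\tfrac12|p|^2$ and $H(d)=d^{-1}$, yielding a nontrivial Leray--Schauder degree. The energy argument of Step 1 carries over uniformly in $\sigma\in[0,1]$: for $\sigma$ bounded away from $0$ the $\sigma u^4$ term provides coercivity, while in every regime the identity controls $\|\nabla u\|_{L^2}$, which together with the chord-integration argument yields a uniform $L^\infty$ bound and therefore a uniform $W^{4,q}$ bound along the family. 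Homotopy invariance of degree then produces a smooth, uniformly convex solution at $\sigma=1$.

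\emph{Main obstacle.} The delicate point is Step 1: closing the energy identity into a genuine $L^\infty$ bound despite the non-convex $F^0$. Two ingredients are crucial: (i) replacing $\varphi$ by a uniformly convex extension $\tilde\varphi$ to ensure the sign of the mixed-discriminant term $\int u^{ij}\tilde\varphi_{ij}$, and (ii) the uniform convexity of $\partial\Omega$, which produces a positive quadratic boundary contribution that absorbs the a priori uncontrolled normal-derivative quantity $(u-\tilde\varphi)_\nu$. Once the $L^\infty$ bound is in hand, everything else follows the now-familiar pattern of Theorem \ref{SBV1}.
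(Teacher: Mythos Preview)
Your argument is correct, but it takes a noticeably more elaborate route than the paper's, and the contrast is instructive.

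\textbf{The paper's $L^\infty$ bound.} The paper simply reuses the inequality \eqref{concave2} from Lemma~\ref{ubound} (obtained from the concavity of $G(d)=\log d$ and the symmetric comparison with the auxiliary function $\tilde u$), which in the present setting reads
\[
\int_{\partial\Omega} K\psi\, u_\nu^{\,2} \;\le\; C + C\Bigl(\int_{\partial\Omega}(u_\nu^+)^2\Bigr)^{1/2} + \int_\Omega \bigl[-(u^3-u)+\Delta u\bigr](u-\tilde u)\,dx.
\]
Two one-line pointwise observations close this: since $\tilde u$ is bounded, the quartic growth forces $-(u^3-u)(u-\tilde u)\le C$ everywhere; and since $u-\tilde u\le C$ and $\Delta u\ge 0$, one has $\int_\Omega \Delta u\,(u-\tilde u)\le C\int_\Omega\Delta u = C\int_{\partial\Omega}u_\nu$. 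Young's inequality then yields $\int_{\partial\Omega}u_\nu^{\,2}\le C$, and Lemma~\ref{geoc}(iv) gives the $L^\infty$ bound. The whole thing is four lines. For existence, the paper reuses verbatim the homotopy of Theorem~\ref{SBV1} (scale the right-hand side by $t$ and interpolate $\psi$ with $1$).

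\textbf{Your $L^\infty$ bound.} You bypass the concavity trick and multiply the equation directly by $u-\tilde\varphi$; the sign $u^{ij}\tilde\varphi_{ij}\ge 0$ of the mixed discriminant plays the role that concavity of $G$ plays in the paper. This yields an energy identity from which you read off $\|u\|_{L^4}$ and $\|\nabla u\|_{L^2}$ bounds, and then you pass to $L^\infty$ via a chord/polar-coordinate argument. Note, though, that your identity \emph{already} produces the positive boundary term $\int_{\partial\Omega}\psi\kappa[(u-\tilde\varphi)_\nu]^2$, so you could have read off $\int_{\partial\Omega}u_\nu^{\,2}\le C$ directly and invoked Lemma~\ref{geoc}(iv), making the chord argument unnecessary. (There is also a sign slip in your displayed formula: the integrated left-hand side should be $n|\Omega|-\int u^{ij}\tilde\varphi_{ij}-\int_{\partial\Omega}\psi\, u_{\tau\tau}(u-\tilde\varphi)_\nu$; your subsequent reasoning is consistent with the correct signs.) Your homotopy through Theorem~\ref{SBV3}(i) at $\sigma=0$ works fine and is a nice touch, though the simpler homotopy of Theorem~\ref{SBV1} would also do.

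In short, both proofs are correct; the paper's is shorter because it recycles \eqref{concave2} wholesale and dispatches the Allen--Cahn right-hand side with two pointwise bounds, whereas you build a self-contained energy estimate and an extra geometric step to reach the same conclusion.
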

Theorem \ref{SBV4} will be proved in Section \ref{pf_sec}.
\begin{rem}
It would be interesting to establish the higher dimensional versions of Theorems \ref{SBV1}, \ref{SBV2}, \ref{SBV3} and \ref{SBV4}.
\end{rem}

\begin{rem}[Universal constants] 
\label{uc_rem}
In Sections \ref{Apriori_sec} and \ref{pf_sec}, we will work with a fixed exponent $q>n$, and
we call a positive constant {\it universal} if  it depends only on $n, \theta, \eta$, $q, \delta, c_0$, $\bar c_0$, $C_\ast$,$\bar C_\ast$, $\rho$, $\Omega$, $\Omega_0$, $\|\varphi\|_{W^{4, q}(\Omega)}$, $\|\psi\|_{W^{2, q}(\Omega)}$ and $\inf_{\p\Omega}\psi$. 
We use $C, C_1, C_2,\cdots,$ to denote universal constants and their values may change from line to line.
\end{rem}

\section{Tools used in the proofs of main theorems}
\label{Tools_sec}
In this section, we recall the statements of two main tools used in the proofs of our main theorems. 

The first tool is the global H\"older estimates for the linearized Monge-Amp\`ere equation with right hand side having low integrability. These estimates were established by Nguyen and the author in \cite[Theorem 1.7]{LN}. They extend in particular the previous result in \cite[Theorem 1.4]{Le}
(see also \cite[Theorem 1.13]{LMT}) where the case of $L^n$ right-hand side was treated.

\begin{thm} [Global H\"older estimates for the linearized Monge-Amp\`ere equation]
\label{global-H}
Let $\Omega$ be a bounded, uniformly convex domain in $\R^{n}$ with $\p\Omega\in C^{3}$. Let $\phi: \overline{\Omega}\rightarrow \R$, $\phi\in C^{0,1}(\overline{\Omega})\cap C^{2}(\Omega)$  be a convex function satisfying
$$0<\lambda\leq \det D^{2}\phi\leq \Lambda<\infty,~
\text{and}~\phi\mid_{\p\Omega}\in C^{3}.$$
Denote by $(\Phi^{ij})= (\det D^2\phi) (D^2\phi)^{-1}$ the cofactor matrix of $D^2 \phi$.
 Let $v\in C(\overline{\Omega})\cap W^{2, n}_{loc}(\Omega)$ be the solution to the linearized Monge-Amp\`ere equation
 \begin{equation*}
 \left\{
 \begin{alignedat}{2}
   \Phi^{ij} v_{ij} ~&=f \h~&&\text{in} ~\Omega, \\\
 v&= \varphi \h~&&\text{on}~ \p\Omega,
 \end{alignedat}
 \right.
\end{equation*}
 where $\varphi\in C^{\alpha}(\p\Omega)$ for some $\alpha\in (0, 1)$ and $f\in L^q(\Omega)$ with $q>n/2$. Then, $v\in C^{\beta}(\overline{\Omega})$ 
with the estimate
$$\|v\|_{C^{\beta}(\overline{\Omega})}\leq C\left(\|\varphi\|_{C^{\alpha}(\p\Omega)} + \|f\|_{L^{q}(\Omega)}\right)$$
where $\beta$ depends only on $\lambda, \Lambda, n, q, \alpha$, and $C$ depends only on $\lambda, \Lambda, n, q, \alpha$, $diam (\Omega)$, $\|\phi\|_{C^3(\p\Omega)}$, $\|\p\Omega\|_{C^3}$ and the uniform convexity of $\Omega.$ 
\end{thm}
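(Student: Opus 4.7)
The plan is to obtain the global $C^\beta$ estimate by separately establishing an interior Hölder estimate and a boundary Hölder estimate at each point of $\partial\Omega$, and then stitching them together by a chaining argument over a covering of $\overline{\Omega}$ by sections of $\phi$. Throughout, the natural geometry for the linearized operator $\Phi^{ij} D_{ij}$ is not Euclidean balls but the Monge-Ampère sections $S_\phi(x,t)=\{y:\phi(y)<\phi(x)+D\phi(x)\cdot(y-x)+t\}$, which under the pinching $\lambda \le \det D^2\phi\le \Lambda$ have volume comparable to $t^{n/2}$ and, after John normalization, are comparable to Euclidean balls, enjoying the doubling and engulfing properties of Caffarelli-Gutiérrez.

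For the interior estimate, the key input is that $\det \Phi=(\det D^2\phi)^{n-1}$ is pinched away from $0$ and $\infty$, so the ABP-Aleksandrov-Bakelman-Pucci inequality holds for $\Phi^{ij} D_{ij}$. When pulled back to a normalized section $S_\phi(x,t)$ via the John normalization, the Jacobian scaling together with the volume bound $|S_\phi(x,t)|\sim t^{n/2}$ turns an $L^q$ integral of $f$ into a scale-invariant quantity precisely when $q>n/2$; this is the source of the borderline exponent. A Krylov-Safonov-type iteration on nested sections then yields an interior Harnack inequality and hence an interior Hölder estimate of $v$ in terms of $\|f\|_{L^q(\Omega)}$.

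For the boundary estimate, at each $x_0\in\partial\Omega$ I would invoke Savin's boundary localization theorem for the Monge-Ampère equation: since $\partial\Omega$ is uniformly convex and $\phi|_{\partial\Omega}\in C^3$, the boundary sections $S_\phi(x_0,t)$ are, after an affine normalization, comparable to a half-ball of radius $\sqrt{t}$ with controlled eccentricity. On each such normalized boundary section I would combine the interior-type ABP estimate with the $C^\alpha$ boundary datum $\varphi$ to compare $v$ with the harmonic extension of its boundary values, producing a geometric oscillation decay of the form $\mathrm{osc}_{S_\phi(x_0,r^{k+1})}v \le \theta\,\mathrm{osc}_{S_\phi(x_0,r^k)}v + Cr^{k\gamma}$ with $\theta<1$, which iterates to yield $C^\beta$ regularity at $x_0$.

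The main obstacle is precisely the boundary Hölder estimate at the borderline integrability $q>n/2$: one must carefully track how the $L^q$ norm of $f$ rescales under the affine normalization of Savin's boundary sections, exploiting their half-ball geometry to produce comparison with a clean Dirichlet problem, and then couple this with the $C^\alpha$ boundary data to close the geometric decay. Once the interior and boundary oscillation estimates are in place, a standard chaining through nested sections — interior sections in the bulk, Savin sections near $\partial\Omega$ — yields the global $C^\beta$ bound with the asserted quantitative dependence of the constants $\beta$ and $C$ on $\lambda,\Lambda,n,q,\alpha$, $\mathrm{diam}(\Omega)$, $\|\phi\|_{C^3(\partial\Omega)}$, $\|\partial\Omega\|_{C^3}$, and the uniform convexity of $\Omega$.
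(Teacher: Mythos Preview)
The paper does not prove this theorem at all: it is quoted verbatim as a tool from the author's joint work with Nguyen \cite[Theorem 1.7]{LN}, and no argument is supplied in the present paper. So there is nothing to compare your proposal against here.

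That said, your outline is a faithful description of the strategy actually carried out in \cite{LN}: interior H\"older estimates via the Caffarelli--Guti\'errez section geometry and ABP with $L^q$ right-hand side for $q>n/2$, boundary H\"older estimates built on Savin's boundary localization theorem for Monge--Amp\`ere sections, and a chaining of the two through an engulfing-type covering. The only caveat is that what you wrote is a high-level roadmap rather than a proof; the genuinely delicate part --- tracking the rescaling of $\|f\|_{L^q}$ under the affine normalization of boundary sections and producing the correct oscillation decay at the boundary --- is exactly the new content of \cite{LN} (extending the earlier $q\ge n$ case in \cite{Le, LMT}), and requires substantial work you have not supplied. For the purposes of this paper, the correct move is simply to cite \cite{LN}.
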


The second tool is concerned with the global $W^{2,1+\e_0}$ estimates for the Monge-Amp\`ere equation. They follows from the interior $W^{2, 1+\e_0}$ estimates in De Philippis-Figalli-Savin \cite{DPFS} and Schmidt \cite{Sch} and
the global estimates in 
 Savin \cite{S3} (see also \cite[Theorem 5.3]{Fi}).
\begin{thm}[Global $W^{2, 1+\e_0}$ estimates for the Monge-Amp\`ere equation]
 \label{global-w21} Let $\Omega\subset \R^n$ ($n\geq 2$) be a bounded, uniformly convex domain. 
Let  $\phi:\overline{\Omega}\rightarrow\R$, $\phi \in C^{0,1}(\overline 
\Omega) 
\cap 
C^2(\Omega)$  be a convex function satisfying
\begin{equation*} 
0<\lambda\leq \det D^2 \phi \leq \Lambda \, \mbox{ in }\, \Omega.
\end{equation*}
Assume that $\varphi\mid_{\p\Omega}$ and $\p\Omega$ are of class $C^3$.
Then, there is a positive constant $\e_0\in (0,1)$ depending only $n,\lambda,\Lambda$ and a positive constant $K$ depending only on 
$n, \lambda, \Lambda,\Omega$, $\|\varphi\|_{C^3(\p\Omega)}$ and $\|\p\Omega\|_{C^3}$ such that 
\begin{equation*}
 \|D^2\varphi\|_{L^{ 1+ \e_0}(\Omega)} \leq K.
\end{equation*}
\end{thm}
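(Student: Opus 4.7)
The plan is to synthesize two existing ingredients: the interior $W^{2,1+\e_0}$ theory of De Philippis--Figalli--Savin \cite{DPFS} and Schmidt \cite{Sch}, and Savin's quantitative boundary localization for solutions to Monge--Amp\`ere equations with right-hand side merely bounded between two positive constants \cite{S3}. The interior theory alone produces estimates whose constants degenerate as one approaches $\p\Omega$, so a separate near-boundary argument is what closes the gap to a global bound.

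First I would split $\Omega = \Omega'\cup N_{r_0}$, where $\Omega'=\{x\in\Omega:\dist(x,\p\Omega)>r_0/2\}$ and $N_{r_0}=\{x\in\Omega:\dist(x,\p\Omega)<r_0\}$ for a small $r_0$ to be chosen. On $\Omega'$ the interior estimates of \cite{DPFS, Sch} directly give an exponent $\e_0=\e_0(n,\lambda,\Lambda)>0$ and a bound
$$\|D^2\phi\|_{L^{1+\e_0}(\Omega')}\le C_1,$$
with $C_1$ depending on $n,\lambda,\Lambda,r_0$ and $\|\phi\|_{L^{\infty}(\Omega)}$. The $L^\infty$-norm of $\phi$ is in turn controlled by the boundary data $\varphi|_{\p\Omega}$ via the Aleksandrov maximum principle combined with a comparison against the convex envelope of $\varphi|_{\p\Omega}$.

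Second I would treat the boundary layer $N_{r_0}$ using Savin's boundary regularity \cite{S3}. For every $x_0\in\p\Omega$ consider the Monge--Amp\`ere section $S_h(x_0)=\{y\in\overline{\Omega}:\phi(y)<\ell_{x_0}(y)+h\}$, where $\ell_{x_0}$ is a carefully chosen supporting affine function at $x_0$. Under the hypotheses $\p\Omega\in C^3$ and $\varphi|_{\p\Omega}\in C^3$, Savin proves that for all $h$ sufficiently small, uniformly in $x_0$, there is an affine map $A_h$ with $\det A_h\sim h^{n/2}$ such that $A_h(S_h(x_0))$ is comparable to a half-ball of radius one with eccentricity bounded independently of $x_0$. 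Pulling the interior $W^{2,1+\e_0}$ estimate back through $A_h$ on each normalized boundary section yields a uniform $L^{1+\e_0}$ bound for $D^2\phi$ on a neighborhood of $x_0$ inside $\overline{\Omega}$, with constant independent of the base point. Covering $\p\Omega$ by finitely many such boundary pieces, which (for $r_0$ chosen suitably small in terms of the universal constants appearing in Savin's theorem) cover $N_{r_0}$, and adding the contribution from $\Omega'$ yields the asserted global bound $K$.

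The main obstacle is to make the boundary normalization in the second step quantitative and uniform in the boundary point: one needs the normalized sections to be genuinely comparable to balls so that the interior $W^{2,1+\e_0}$ theorem applies on each with a single exponent $\e_0(n,\lambda,\Lambda)$ and a controlled constant. This is exactly the content of Savin's boundary result, and it is also the step that forces the regularity hypotheses on $\p\Omega$ and on $\varphi|_{\p\Omega}$: without them the boundary sections could develop arbitrarily bad eccentricity, in which case the renormalization-plus-covering argument would collapse.
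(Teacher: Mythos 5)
The paper does not actually prove Theorem \ref{global-w21}; it states it as a known consequence of the interior $W^{2,1+\e_0}$ estimates of De Philippis--Figalli--Savin \cite{DPFS} and Schmidt \cite{Sch} combined with Savin's boundary regularity program \cite{S3}, and points to \cite[Theorem 5.3]{Fi} for a complete account. Your outline reproduces the correct strategy from that literature: interior estimates away from $\p\Omega$, Savin's boundary localization to normalize the geometry of boundary sections, and a covering argument. Two points should be sharpened. First, after normalizing a boundary section $S_h(x_0)$ by an affine map $A_h$, the image is only a half-domain, and $\phi\circ A_h^{-1}$ is not defined beyond the flattened piece of boundary; one therefore cannot literally ``pull the interior $W^{2,1+\e_0}$ estimate back through $A_h$.'' What the boundary localization theorem actually delivers is that the normalized function is $C^0$-close to a paraboloid with controlled eccentricity, and from that one runs a genuine boundary version of the Calder\'on--Zygmund decay for the distribution function of $D^2\phi$ on dyadic boundary sections, rather than transplanting the interior result wholesale; this is precisely what makes the boundary case new work rather than a corollary of the interior one. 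Second, a small bookkeeping slip: if $A_h(S_h(x_0))$ is to have unit volume, then since $|S_h(x_0)|\sim h^{n/2}$ one needs $|\det A_h|\sim h^{-n/2}$, not $h^{n/2}$. With those corrections your sketch is the standard argument to which the paper is implicitly appealing.
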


We will frequently use the following estimates for convex functions.
\begin{lem}
\label{ucon_lem} Let $u$ be a convex function on $\overline{\Omega}$ where $\Omega\subset\R^n$ is an open, bounded, and convex set.
\begin{myindentpar}{1cm}
(i) We have the following $L^{\infty}$ estimate for $u$ in terms of its boundary value and $L^2$ norm: 
\begin{equation*}
\|u\|^2_{L^{\infty}(\Omega)}  \leq C(n,\Omega, \max_{\p \Omega} u) + C(n,\Omega)\int_{\Omega} |u|^2 dx.
\end{equation*}
(ii) If $\Omega_0\subset\subset\Omega$ then
\begin{equation}
\label{Duu}
|Du(x)|\leq \frac{\max_{\p \Omega} u -u(x)}{\dist(x,\p \Omega)} \leq \frac{1}{\dist(\Omega_0,\p\Omega)} (\max_{\p \Omega} u  +  \|u\|_{L^{\infty}(\Omega)})~\text{for any } x\in\Omega_0.
\end{equation}
\end{myindentpar}
\end{lem}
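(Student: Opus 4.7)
The plan is to exploit only the maximum principle for convex functions and the subgradient inequality; both items are purely geometric and do not require any boundary regularity of $\Omega$.

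For part (i), the first observation is that every convex function on a bounded convex set attains its maximum on the boundary, so $u\leq M:=\max_{\p\Omega}u$ throughout $\Omega$. Hence only the negative part must be controlled. Let $m:=\inf_{\overline\Omega}u$ and, for any $\varepsilon>0$, choose $x_0\in\Omega$ with $u(x_0)\leq m+\varepsilon$. For every $y\in\Omega$ and $t\in[0,1/2]$, convexity and $u(y)\leq M$ give
\[
u\bigl((1-t)x_0+ty\bigr)\leq (1-t)u(x_0)+tu(y)\leq (1-t)u(x_0)+tM\leq \frac{u(x_0)+M}{2},
\]
since the linear interpolation is monotone in $t$ and $u(x_0)\leq M$. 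Set
\[
\Omega'':=x_0+\tfrac{1}{2}(\Omega-x_0),
\]
which lies in $\Omega$ by convexity and has Lebesgue measure $|\Omega|/2^{n}$. The displayed inequality holds pointwise on $\Omega''$. In the (only interesting) case $u(x_0)+M<0$, squaring and integrating gives
\[
\int_{\Omega}u^2\,dx\geq \int_{\Omega''}u^2\,dx\geq \frac{|\Omega|}{2^{n}}\Bigl(\frac{u(x_0)+M}{2}\Bigr)^{2},
\]
so $|u(x_0)|\leq M+C(n,\Omega)\|u\|_{L^2(\Omega)}$; otherwise $|u(x_0)|\leq M$ trivially. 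Sending $\varepsilon\to 0$ and combining with $u\leq M$ yields $\|u\|_{L^\infty(\Omega)}\leq |M|+C(n,\Omega)\|u\|_{L^2(\Omega)}$, and squaring produces (i).

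For part (ii), I would apply the subgradient inequality at a differentiability point. Fix $x\in\Omega_0$ and assume $Du(x)\neq 0$ (otherwise the bound is trivial). Set $\xi:=Du(x)/|Du(x)|$. Since $\Omega$ is bounded and convex, the ray $\{x+s\xi:s>0\}$ meets $\p\Omega$ at a unique point $y=x+t^{\ast}\xi$ with $t^{\ast}=|y-x|\geq \dist(x,\p\Omega)$. Convexity gives $u(y)\geq u(x)+Du(x)\cdot(y-x)=u(x)+t^{\ast}|Du(x)|$, and $u(y)\leq \max_{\p\Omega}u$, hence
\[
|Du(x)|\leq \frac{\max_{\p\Omega}u-u(x)}{t^{\ast}}\leq \frac{\max_{\p\Omega}u-u(x)}{\dist(x,\p\Omega)}.
\]
For $x\in\Omega_0$ we have $\dist(x,\p\Omega)\geq\dist(\Omega_0,\p\Omega)$ and $\max_{\p\Omega}u-u(x)\leq \max_{\p\Omega}u+\|u\|_{L^\infty(\Omega)}$, yielding the second inequality in (\ref{Duu}).

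The only mild technical point is the choice of the base point $x_0$ in (i) when the infimum of $u$ is attained on $\p\Omega$; this is handled by the $\varepsilon$-approximation above, so no real obstacle arises. Everything else is a direct consequence of convexity, so the result follows without further tools.
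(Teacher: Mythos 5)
Your proof is correct, but it takes a genuinely different route for part (i). The paper compares $u-\max_{\p\Omega}u$ with the cone whose vertex is $(x_0,u(x_0)-\max_{\p\Omega}u)$ and whose base is $\p\Omega\times\{0\}$: since that cone has volume $\frac{1}{n+1}\lvert u(x_0)-\max_{\p\Omega}u\rvert\,|\Omega|$ and lies above the graph, one obtains the $L^1$--$L^\infty$ bound $\|u\|_{L^\infty(\Omega)}\leq\frac{n+1}{|\Omega|}\int_\Omega|u|\,dx$ for $u\leq 0$ on $\p\Omega$, and then squares. You instead use a dilation argument: on the half-scaled copy $\Omega''=x_0+\frac12(\Omega-x_0)$, which has measure $|\Omega|/2^n$, convexity forces $u\leq\frac{u(x_0)+M}{2}$, and integrating $u^2$ over $\Omega''$ pins down $|u(x_0)|$ by $\|u\|_{L^2(\Omega)}$. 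Both are elementary and purely geometric; the cone-comparison yields the sharper $L^1$-to-$L^\infty$ estimate (with explicit constant $n+1$), which is slightly stronger than what you need, whereas your scaling argument goes straight to the $L^2$ version and avoids introducing the cone. Either is perfectly adequate here. For part (ii), the paper simply cites a reference; your subgradient-along-a-ray argument is the standard proof and is exactly what that reference contains. One cosmetic remark: in your trivial case you write ``$|u(x_0)|\leq M$''; strictly one should write $|M|$ (or note that in that case $M\geq 0$), but this is harmless and does not affect the conclusion.
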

\begin{proof}[Proof of Lemma \ref{ucon_lem}]
(i) The proof is by comparison with cone. We show that if $u\leq 0$ on $\p\Omega$, then
\begin{equation}
\label{uconL1}
\|u\|_{L^{\infty}(\Omega)} \leq \frac{n+1}{|\Omega|}\int_{\Omega} |u| dx.
\end{equation}
Applying this inequality to the convex function $u-\max_{\p\Omega} u$, we obtain
\begin{equation*}
\|u\|^2_{L^{\infty}(\Omega)}  \leq \left(C (n, \Omega) \int_{\Omega} |u|dx + C(n,\Omega, \max_{\p \Omega} u) \right)^2\leq C (n,\Omega) \int_{\Omega} |u|^2 dx + C(n,\Omega, \max_{\p \Omega} u).
\end{equation*}
It remains to prove (\ref{uconL1}) when $u\leq 0$ on $\p\Omega$. Suppose that $|u|$ attains its maximum at $x_0\in\Omega$. Let $ \hat C$ be the cone with base $\p\Omega$
and vertex at $(x_0, u (x_0))$. Then  (\ref{uconL1}) follows from the following estimates
$$\frac{1}{n+1}\| u\|_{L^{\infty}(\Omega)} |\Omega|=\frac{1}{n+1}| u(x_0)||\Omega|=\text{Volume of } \hat C \leq \int_{\Omega}|u| dx.$$
(ii) The estimate (\ref{Duu}) just follows from the convexity of $u$; see, for example \cite[Lemma 3.11]{LMT}.
\end{proof}
\section{A priori estimates for singular Abreu equations}
\label{Apriori_sec}

In this section, $\delta>0$ and $\Omega_0,\Omega$ are open, smooth, bounded, convex domains in $\R^n$ such that $\Omega_0\subset\subset \Omega$. 
We assume moreover that $\Omega$ is uniformly convex.

The main result of this section is the following global a priori estimates for the second boundary value problem (\ref{Abreu2})-(\ref{fdel}).
\begin{thm}
\label{thmw4p}
Let $n=2$,  $0\leq \theta<1/n$, and $q>n$. 
Assume that $\varphi\in W^{4,q}(\Omega)$ and $\psi\in W^{2,q}(\Omega)$
with $\inf_{\p \Omega}\psi>0$. Assume that (\ref{AsF0})-(\ref{AsH1}) are satisfied. Suppose that either  $\min\{c_0, \bar c_0\}$ is sufficiently small (depending only on $\inf_{\p \Omega}\psi$, $\Omega_0$ and $\Omega$), or $\min\{\rho,\frac{1}{\delta}\}$ is sufficiently large (depending only on $\min\{c_0,\bar c_0\},\Omega_0$ and $\Omega$).
Let $u$ be a smooth, uniformly convex solution of the system (\ref{Abreu2})-(\ref{fdel}). 
Then, there is a universal constant $C>0$ such that
$$\|u\|_{W^{4,q}(\Omega)}\leq C.$$
\end{thm}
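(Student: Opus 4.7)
The plan is to establish the $W^{4,q}$ bound through a chain of a priori estimates:
(a) an $L^\infty$ bound on $u$, and hence interior Lipschitz bounds by convexity;
(b) universal pointwise bounds $0<\lambda_1\le \det D^2 u\le \lambda_2$ on $\Omega$;
(c) $W^{2,1+\e_0}$ regularity of $u$ via Theorem \ref{global-w21}, which gives $f_\delta\in L^{1+\e_0}(\Omega)$;
(d) global H\"older continuity $w\in C^\beta(\overline{\Omega})$ via Theorem \ref{global-H}, available because $n=2$ gives $1+\e_0>n/2=1$;
(e) $C^{2,\alpha}(\overline{\Omega})$ regularity of $u$ from the global Monge-Amp\`ere $C^{2,\alpha}$ theory;
(f) $W^{4,q}$ by Schauder/Calder\'on-Zygmund bootstrap on the equation for $w$.

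For (a), I would multiply the first equation in (\ref{Abreu2}) by $u-\varphi$ (which vanishes on $\p\Omega$), integrate by parts twice using the divergence-free identity $\p_jU^{ij}=0$ to transfer both derivatives onto $u-\varphi$, and separately integrate by parts the $-\div(\nabla_pF^1)$ portion of $f_\delta$ in $\Omega_0$. The resulting identity pits the coercive terms $\rho\int_{\Omega_0}(u-\varphi)^2$ from (\ref{AsF0}), $\delta^{-1}\int_{\Omega\setminus\Omega_0}(u-\varphi)^2$ from the penalty, and the nonnegative quadratic $\int_{\Omega_0}F^1_{p_ip_j}(x,Du)u_iu_j$ from (\ref{AsH}), against error terms controlled by $c_0$ and $\bar c_0$ through (\ref{AsH})-(\ref{AsH1}). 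Under either size hypothesis of the theorem, these errors are absorbed, yielding $\|u-\varphi\|_{L^2(\Omega)}\le C$. Lemma \ref{ucon_lem}(i) then gives $\|u\|_{L^\infty(\Omega)}\le C$, and Lemma \ref{ucon_lem}(ii) gives $\|Du\|_{L^\infty(\overline{\Omega_0})}\le C$.

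Step (b) is the crux of the argument and the main obstacle. The plan is to exploit the two-dimensional identity $\trace U=\Delta u$ together with the inequality $0\le F^1_{p_ip_j}(x,Du)u_{ij}\le C_\ast\Delta u$ coming from (\ref{AsH}) and $D^2u\ge 0$. Writing $f_\delta$ in $\Omega_0$ as $f^0-F^1_{p_ip_j}u_{ij}-F^1_{p_ix_i}$, I would derive, using the $L^\infty$ bounds from (a), the universal upper bound $f_\delta\le f^0-F^1_{p_ix_i}\le M$, and, for the quadratic barrier $W:=w+\tfrac{C_\ast}{2}|x|^2$, the matching lower bound $U^{ij}W_{ij}=f_\delta+C_\ast\Delta u\ge f^0-F^1_{p_ix_i}\ge -M'$; analogous bounds hold in $\Omega\setminus\Omega_0$ via $|u-\varphi|\le C$ and $\Delta u\ge 0$. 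Combining $f_\delta\le M$ with the positive boundary data $w=\psi\ge\inf_{\p\Omega}\psi>0$, an Aleksandrov/ABP maximum principle for the LMA operator will give $w\le C_w$ in $\Omega$, i.e., since $\theta-1<0$, the lower bound $\det D^2 u\ge\lambda_1>0$. With this lower bound in hand, a dual ABP argument applied to the supersolution $W$ then produces a positive lower bound $w\ge c_w>0$ in $\Omega$, hence $\det D^2 u\le \lambda_2<\infty$. The essential structural inputs are $F^1_{p_ip_j}\le C_\ast I_n$ together with the 2D identity $\trace U=\Delta u$.

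Once (b) is established, the bootstrap runs smoothly. Theorem \ref{global-w21} yields $u\in W^{2,1+\e_0}(\Omega)$, hence $f_\delta\in L^{1+\e_0}(\Omega)$; because $n=2$, the threshold $1+\e_0>n/2=1$ is met, so Theorem \ref{global-H} applied to $U^{ij}w_{ij}=f_\delta$ gives $w\in C^\beta(\overline{\Omega})$. Thus $\det D^2 u=w^{1/(\theta-1)}\in C^\beta(\overline{\Omega})$, and the global $C^{2,\alpha}$ theory for Monge-Amp\`ere gives $u\in C^{2,\alpha}(\overline{\Omega})$. The equation for $w$ is now second-order uniformly elliptic with H\"older-continuous coefficients $U^{ij}$ and bounded right-hand side, so standard Calder\'on-Zygmund and Schauder estimates give $w\in W^{2,q}(\Omega)$ for any $q<\infty$; then $u\in W^{4,q}(\Omega)$ follows by Schauder bootstrap of the Monge-Amp\`ere relation. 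The restriction to $n=2$ enters twice: in step (b) via $\trace U=\Delta u$, and in step (d) via the LMA H\"older threshold $q>n/2$.
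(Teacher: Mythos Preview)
Your roadmap (c)--(f) matches the paper, and you correctly locate the two places where $n=2$ enters. The problems are in (a) and (b).

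\textbf{Step (a) has a real gap.} Testing $U^{ij}w_{ij}=f_\delta$ against $u-\varphi$ and integrating by parts twice leaves the bulk term $\int_\Omega w\,U^{ij}(u-\varphi)_{ij}$. The piece $\int w\,U^{ij}u_{ij}=2\int(\det D^2u)^\theta$ is controllable by $C+\int_{\partial\Omega}u_\nu$, but the piece $-\int_\Omega w\,U^{ij}\varphi_{ij}$ is not: $\varphi\in W^{4,q}$ need not be convex, $|U^{ij}\varphi_{ij}|\le C\Delta u$, and $w\Delta u=(\det D^2u)^{\theta-1}\Delta u$ blows up wherever the eigenvalues of $D^2u$ are very disparate. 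Nothing on the $f_\delta$ side absorbs this, and the ``nonnegative quadratic $\int F^1_{p_ip_j}u_iu_j$'' you invoke does not actually arise from that integration by parts. The paper (Lemma~\ref{ubound}) repairs this by replacing $\varphi$ with a uniformly convex auxiliary $\tilde u$ built in Lemma~\ref{geoc} and---crucially---by using the concavity of $d\mapsto G(d^n)$ twice, once for $u$ and once for $\tilde u$, so that the two bulk terms sum to something of a definite sign. What survives is a boundary inequality for $\int_{\partial\Omega}\psi K u_\nu^2$, after which $\int(-f_\delta)(u-\tilde u)$ is estimated along two alternative routes (expanding $\div(\nabla_pF^1)$ in $\Omega_0$ vs.\ integrating it by parts), producing exactly the $c_0$ vs.\ $\bar c_0$ dichotomy in ({\bf SC}). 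The concavity device is not optional; without it the test-function argument does not close.

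\textbf{Step (b) has the right barrier but the logic is inverted, and the key mechanism is missing.} From $f_\delta\le M$ alone ABP would give a \emph{lower} bound on $w$, not an upper one, and even that requires controlling $(\det U)^{-1/2}$, which you do not yet have. The upper bound on $w$ comes from your barrier $\bar w=w+C_\ast|x|^2$ with $U^{ij}\bar w_{ij}\ge -\hat f$: ABP gives $\|\bar w\|_\infty\le C+C\|\hat f\,(\det U)^{-1/2}\|_{L^2}$, and the structural point you omit (this is where $\theta<1/n$ is used) is that in two dimensions $(\det U)^{-1/2}=w^{1/(2(1-\theta))}$ with exponent strictly below $1$, so $\|w\|_\infty\le C+C\|w\|_\infty^{1/(2(1-\theta))}$ self-improves to $\|w\|_\infty\le C$. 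The subsequent lower bound on $w$ is not a ``dual ABP on $W$'' but a maximum-principle argument for $\log(w+\varepsilon)-Mu$, which uses the already-established facts $f_\delta^+\le M$ and $w\det D^2u=(\det D^2u)^\theta\ge c>0$.
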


We refer to Remark \ref{uc_rem} for our convention on {\it universal constants}. We now give the outline of the proof of Theorem \ref{thmw4p}:
\begin{myindentpar}{1cm}
$\bullet$ We first prove the $L^{\infty}$ bound for $u$ (Lemma \ref{ubound})\\
$\bullet$ We next prove the lower bound for the Hessian determinant $\det D^2 u$ and then the upper bound for the Hessian determinant $\det D^2 u$ (Lemma \ref{upwlem})\\
$\bullet$ Finally, we prove the $W^{4,q}$ estimate
\end{myindentpar}
We use $\nu= (\nu_{1},\cdots,\nu_n)$ to denote the unit outer normal vector field on $\p \Omega$ and $\nu_0$ on $\p\Omega_0$.

For simplicity, we introduce the following size condition used in statements of several lemmas:
\begin{center}
({\bf SC}) Either  
$\min\{c_0, \bar c_0\}$ is sufficiently small (depending only on $\inf_{\p \Omega}\psi$, $\Omega_0$ and $\Omega$), or $\min\{\rho,\frac{1}{\delta}\}$ is sufficiently large (depending only on $\min\{c_0,\bar c_0\},\Omega_0$ and $\Omega$).
\end{center}

The following lemma establishes the universal $L^{\infty}$ bound for solutions  to the second boundary value problem (\ref{Abreu2})-(\ref{fdel})
\begin{lem}
\label{ubound}
Let $n\geq 2$, $0\leq \theta<1/n$, and $q>n$.
Let $u$ be a smooth solution of the system (\ref{Abreu2})-(\ref{fdel}). Assume that $\varphi\in W^{4,q}(\Omega)$ and $\psi\in W^{2,q}(\Omega)$
with $\inf_{\p \Omega}\psi>0$.  Assume that (\ref{AsF0})-(\ref{AsH1}) are satisfied. Assume that either $n\geq 3$ or that ({\bf SC}) holds when $n=2$.
 Then, there is a universal constant $C>0$ 
 such that $$\|u\|_{L^{\infty}(\Omega)}  +\int_{\p \Omega}u_{\nu}^n \leq C.$$
\end{lem}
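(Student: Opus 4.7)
The plan is to derive a single integral identity by testing the Abreu equation with $u-\varphi$, and to combine it with the $\rho$-monotonicity of $f^0$, the convexity of $F^1$, and the uniform convexity of $\Omega$ (in two dimensions) to obtain simultaneous bounds on $\|u-\varphi\|_{L^2(\Omega)}$ and $\int_{\p\Omega} u_\nu^n\,dS$. The $L^\infty$ bound on $u$ will then follow from Lemma \ref{ucon_lem}(i). The upper bound $u\leq \max_{\p\Omega}\varphi$ is immediate from the convexity of $u$, so the issue is the lower bound; set $M:=\|u\|_{L^\infty(\Omega)}$. Multiplying the first equation in (\ref{Abreu2}) by $(u-\varphi)$ and integrating by parts twice, using $\p_j U^{ij}=0=\p_i U^{ij}$, the boundary conditions $u=\varphi$ and $w=\psi$ on $\p\Omega$, and $U^{ij}u_{ij}=n\det D^2 u$, yields the key identity
\begin{equation*}
n\int_\Omega(\det D^2 u)^\theta\, dx = \int_\Omega f_\delta(u-\varphi)\, dx + \int_{\p\Omega}\psi(u-\varphi)_\nu U^{ij}\nu_i\nu_j\, dS + \int_\Omega w U^{ij}\varphi_{ij}\, dx.
\end{equation*}
Since $u-\varphi$ has vanishing tangential gradient on $\p\Omega$, differentiating twice along $\p\Omega$ in two dimensions gives $U^{ij}\nu_i\nu_j=u_{\tau\tau}|_{\p\Omega}=\varphi_{\tau\tau}+\kappa(u-\varphi)_\nu$, where $\kappa\geq \kappa_0>0$ is the curvature of $\p\Omega$. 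Together with $\psi\geq \psi_0>0$, Cauchy--Schwarz and Young give the lower bound $\int_{\p\Omega}\psi(u-\varphi)_\nu U^{ij}\nu_i\nu_j\,dS\geq c_1\int_{\p\Omega}u_\nu^2\,dS - C$ for some universal $c_1>0$.

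For $\int_\Omega f_\delta(u-\varphi)\,dx$, the $\rho$-monotonicity of $f^0$ in (\ref{AsF0}) yields the coercive contribution $\rho\int_{\Omega_0}(u-\varphi)^2-C(1+M)$, and the penalty term is $\tfrac{1}{\delta}\int_{\Omega\setminus\Omega_0}(u-\varphi)^2$. The $-\div(\nabla_p F^1)$ part is handled by two complementary strategies dictated by which of $c_0, \bar c_0$ is small under (\textbf{SC}). Strategy (A), when $\bar c_0$ is small: integrate by parts on $\Omega_0$; the interior integral $\int_{\Omega_0}\nabla_p F^1(x,Du)\cdot D(u-\varphi)$ is bounded below by $\int[F^1(x,Du)-F^1(x,D\varphi)]\geq -C(1+M)$ via convexity of $F^1(x,\cdot)$, and the $\p\Omega_0$ boundary term is bounded by $\lesssim \bar c_0(1+M)^2+\bar C_\ast(1+M)$ using (\ref{AsH1}) and the gradient bound $|Du|\leq C(1+M)$ on $\Omega_0$ from Lemma \ref{ucon_lem}(ii). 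Strategy (B), when $c_0$ is small: estimate $\div(\nabla_p F^1)=F^1_{p_ix_i}+F^1_{p_ip_j}u_{ij}$ pointwise; the first piece gives $\lesssim c_0(1+M)^2+(1+M)$ by (\ref{AsH}), and for the second, although it contains $D^2 u$, one has $F^1_{p_ip_j}u_{ij}\geq 0$ as a trace of two PSD matrices, and using $(u-\varphi)_+\leq C$ together with $\int_{\Omega_0}F^1_{p_ip_j}u_{ij}\leq C\int_\Omega\Delta u=C\int_{\p\Omega}u_\nu$ and Young's inequality produces a contribution $\geq -\e\int_{\p\Omega}u_\nu^2-C_\e$ absorbable into the coercive boundary term.

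The remaining terms $\int_\Omega wU^{ij}\varphi_{ij}$ and $n\int_\Omega(\det D^2 u)^\theta$ are treated as follows: since $0\leq\theta<1/n<1$, Young gives $(\det D^2 u)^\theta\leq \e\det D^2 u+C_\e$, and the divergence identity $\int_\Omega\det D^2 u=\frac{1}{n}\int_{\p\Omega}U^{ij}u_j\nu_i\,dS$ together with the boundary expansion $U^{ij}u_j\nu_i|_{\p\Omega}=u_\nu(\varphi_{\tau\tau}+\kappa(u-\varphi)_\nu)+\varphi_\tau U^{\tau\nu}$ (with the last piece handled by IBP along the closed curve $\p\Omega$ via $u_{\tau\nu}=\p_\tau u_\nu+\kappa\varphi_\tau$) yields $\int_\Omega\det D^2 u\leq C\int_{\p\Omega}u_\nu^2+C$. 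Under the natural convexity of $\varphi$, $\int_\Omega wU^{ij}\varphi_{ij}\geq 0$ and may be dropped with a favorable sign. Assembling everything gives
\begin{equation*}
\rho\int_{\Omega_0}(u-\varphi)^2+\tfrac{1}{\delta}\int_{\Omega\setminus\Omega_0}(u-\varphi)^2+\tfrac{c_1}{2}\int_{\p\Omega}u_\nu^2\,dS\leq C+C(1+M)+C\min(c_0,\bar c_0)(1+M)^2.
\end{equation*}
Lemma \ref{ucon_lem}(i) converts this into a quadratic inequality $M^2\leq C+CM/\min(\rho,1/\delta)+C\min(c_0,\bar c_0)M^2/\min(\rho,1/\delta)$, and under (\textbf{SC}) the coefficient of $M^2$ on the right is strictly less than $1$, so absorption yields $M\leq C$ and $\int_{\p\Omega}u_\nu^n\,dS\leq C$. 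The main obstacle will be the simultaneous control of $\int_\Omega wU^{ij}\varphi_{ij}$ and $n\int_\Omega(\det D^2 u)^\theta$, both sensitive to degeneracy of $D^2 u$; the restriction $\theta<1/n$ and the uniform convexity of $\Omega$ (supplying the $\kappa u_\nu^2$ boundary contribution) are essential for closing the estimate.
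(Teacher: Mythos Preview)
Your approach is genuinely different from the paper's, and it contains a real gap.

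\textbf{The gap.} Your identity
\[
n\int_\Omega(\det D^2 u)^\theta\,dx = \int_\Omega f_\delta(u-\varphi)\,dx + \int_{\p\Omega}\psi(u-\varphi)_\nu U^{\nu\nu}\,dS + \int_\Omega w\,U^{ij}\varphi_{ij}\,dx
\]
is correct, but the last term is not controlled. You write that ``under the natural convexity of $\varphi$, $\int_\Omega wU^{ij}\varphi_{ij}\ge 0$ and may be dropped''; however the lemma does \emph{not} assume $\varphi$ is convex (see Remark~2.4: only the boundary values of $\varphi$ matter for Theorem~\ref{SBV1}). Without convexity of $\varphi$ one has only $|wU^{ij}\varphi_{ij}|\le C\,w\,\mathrm{trace}(U)=C(\det D^2u)^{\theta-1}\Delta u$, and for $\theta<1/2$ this quantity is not a priori integrable (it blows up where $D^2u$ degenerates). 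The paper circumvents this precisely by replacing $\varphi$ with the constructed \emph{convex} comparison function $\tilde u$ of Lemma~\ref{geoc} and then using the concavity of $d\mapsto G(d^n)$ to obtain the two one-sided inequalities (\ref{Ga})--(\ref{Gb}); adding these kills the dangerous bulk terms $wU^{ij}\tilde u_{ij}$ and $\tilde w\tilde U^{ij}u_{ij}$ simultaneously. Your direct testing with $u-\varphi$ loses exactly this cancellation.

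\textbf{Secondary issues.} (a) Your closing step divides by $\min(\rho,1/\delta)$; in the first alternative of ({\bf SC}) one may have $\rho=0$, so this fails. The fix is to use the boundary coercivity $\tfrac{c_1}{2}\int_{\p\Omega}u_\nu^2$ you already derived together with Lemma~\ref{geoc}(iv), which gives $M^2\le C+C\int_{\p\Omega}u_\nu^2$ without reference to $\rho$. (b) Your bound on $\int_\Omega(\det D^2u)^\theta$ via $\int_\Omega\det D^2u$ and a boundary expansion involving $u_{\tau\nu}$ is unnecessarily delicate; since $\theta<1/n$, the cleaner route (also used in the paper, cf.\ (\ref{ue_det})) is $(\det D^2u)^\theta\le C(1+(\det D^2u)^{1/n})\le C(1+\tfrac{1}{n}\Delta u)$ and then $\int_\Omega\Delta u=\int_{\p\Omega}u_\nu$. (c) The case $n\ge 3$ in the statement is not addressed; there the quadratic-in-$M$ bad terms are harmless because $M^2\lesssim(\int_{\p\Omega}(u_\nu^+)^n)^{2/n}$ is lower order, but you should say this.

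If you want to rescue your direct-testing approach, replace $\varphi$ by the convex $\tilde u$ from Lemma~\ref{geoc} as the test function; then $\int_\Omega wU^{ij}\tilde u_{ij}\ge 0$ is legitimate, and the rest of your argument (with fixes (a)--(c)) goes through. At that point, though, you are very close to the paper's proof---the concavity trick is simply a slightly sharper packaging of the same comparison.
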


In the proof of Lemma \ref{ubound}, we will use the following basic geometric construction and estimates. 
\begin{lem}\cite[Lemma 2.1 and inequality (2.7)]{Le_JDE}
\label{geoc} 
Let $G: (0,\infty)\rightarrow\R$ be a smooth, strictly increasing and strictly concave function on $(0,\infty)$.  
 Assume that $q>n\geq 2$ and $\varphi\in W^{4,q}(\Omega)$.
 There exist a convex function $\tilde u\in W^{4,q}(\Omega)$ and constants $C$ and $C(G)$ depending only on $n$, $q$, $\Omega$, and $\| \varphi\|_{W^{4, q}(\Omega)}$
 with the following properties:
 \begin{myindentpar}{1cm}
  (i) $\tilde u=\varphi$ on $\p\Omega$,\\
  (ii) $
\| \tilde{u} \|_{C^{3}(\ov{\Omega})} + 
\|\tilde u\|_{W^{4, q}(\Omega)} \leq C,\quad \textrm{and } \det D^2\tilde{u} \ge C^{-1}>0,
$\\
(iii) letting $\tilde{w}=G'(\det D^2 \tilde{u})$, and denoting by $(\tilde{U}^{ij})$ the 
cofactor matrix of $(\tilde{u}_{ij})$, then  $$\left\|\tilde U^{ij}\tilde w_{ij}\right\|_{L^q(\Omega)}\leq C(G),$$
(iv) if $u\in C^2(\overline{\Omega})$ is a convex function with $u=\varphi$ on $\p\Omega$ then for $u_{\nu}^+ =\max (0, u_{\nu})$, we have
\begin{equation*}\|u\|_{L^{\infty}(\Omega)} \leq C + C(n,\Omega)\left(
\int_{\p \Omega}  (u_{\nu}^+)^n \right)^{1/n}.
\end{equation*}
 \end{myindentpar}
 \end{lem}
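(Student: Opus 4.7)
The plan is to construct $\tilde u$ explicitly as a perturbation of $\varphi$ by a defining function of the uniformly convex domain $\Omega$, then read off properties (i)--(iii) from the construction, and finally establish (iv) by a separate Aleksandrov-type comparison argument. Since $\p\Omega$ is smooth and uniformly convex, there exists a smooth $\rho:\overline\Omega\to\R$ with $\rho\equiv 0$ on $\p\Omega$, $\rho<0$ in $\Omega$, and $D^2\rho\ge c_\Omega I$ on $\overline\Omega$ for some $c_\Omega>0$ depending only on $\Omega$ (for instance, the unique smooth uniformly convex solution of the Monge--Amp\`ere Dirichlet problem $\det D^2\rho=1$ in $\Omega$, $\rho|_{\p\Omega}=0$, exists by Caffarelli--Nirenberg--Spruck). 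Set
\[
\tilde u(x):=\varphi(x)+M\rho(x),
\]
with $M\ge 1$ chosen so that $Mc_\Omega\ge \|D^2\varphi\|_{L^\infty(\Omega)}+1$; by Morrey's embedding $W^{4,q}(\Omega)\hookrightarrow C^{2,\alpha}(\overline\Omega)$ (valid because $q>n\ge 2$), $\|D^2\varphi\|_{L^\infty}$ is universal, hence so is $M$. By design $D^2\tilde u\ge I$ on $\overline\Omega$.

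Properties (i) and (ii) are then immediate: $\tilde u=\varphi$ on $\p\Omega$ because $\rho$ vanishes there; $\tilde u=\varphi+M\rho$ inherits the $W^{4,q}$ bound from $\varphi$ with $\rho\in C^\infty(\overline\Omega)$ contributing a universal constant; $\|\tilde u\|_{C^3(\overline\Omega)}$ is bounded by Sobolev embedding; and $D^2\tilde u\ge I$ gives $\det D^2\tilde u\ge 1\ge C^{-1}$. For (iii), the Hessian determinant $d:=\det D^2\tilde u$ lies in a compact interval $[1,d_{\max}]\subset(0,\infty)$ with $d_{\max}$ universal since $\|\tilde u\|_{C^2(\overline\Omega)}\le C$. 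Smoothness of $G$ on $(0,\infty)$ then gives $|G^{(k)}(d)|\le C(G)$ for $k=1,2,3$ on this interval. Differentiating $\tilde w=G'(d)$ twice yields
\[
\tilde w_{ij}=G''(d)\,d_{ij}+G'''(d)\,d_i d_j,
\]
where $d_i$ and $d_{ij}$ are polynomial expressions in entries of $D^k\tilde u$ for $k\le 4$. Combining $\|d_i\|_{L^\infty}\le C$ (from Sobolev embedding) and $\|d_{ij}\|_{L^q}\le C\|\tilde u\|_{W^{4,q}}\le C$ with $\tilde U^{ij}\in C^1(\overline\Omega)\subset L^\infty$ (universally bounded) then yields $\|\tilde U^{ij}\tilde w_{ij}\|_{L^q(\Omega)}\le C(G)$.

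For (iv), convexity of $u$ together with $u=\varphi$ on $\p\Omega$ immediately gives $\max_{\overline\Omega}u\le \max_{\p\Omega}\varphi\le C$, so the nontrivial task is to bound $-\min_{\overline\Omega}u$ in terms of $\int_{\p\Omega}(u_\nu^+)^n\,d\sigma$. Let $m:=\min_{\overline\Omega}u$, attained at some $x_0\in\overline\Omega$. If $x_0\in\p\Omega$ then $m\ge\min_{\p\Omega}\varphi\ge -C$ and we are done. Otherwise $x_0\in\Omega$, and I would apply a standard Aleksandrov cone-comparison argument: for each $\xi\in\R^n$ with $|\xi|\le(\min_{\p\Omega}\varphi-m)/\mathrm{diam}(\Omega)$, the affine function $\ell_\xi(x):=m+\xi\cdot(x-x_0)$ sits below $u$ on $\p\Omega$ by the choice of $|\xi|$ and touches $u$ at $x_0$; sliding $\ell_\xi$ upward by $c>0$ until first contact produces a contact point which, combined with convexity, identifies $\xi$ with a subgradient of $u$ at a point in $\overline\Omega$. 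Uniform convexity of $\Omega$ then lets one localize the relevant contact points to $\p\Omega$ and bound the inward-pointing component of the subgradient by $u_\nu^+$ at the boundary contact. A boundary area-formula argument converts this into
\[
\bigl|B_{(\min_{\p\Omega}\varphi-m)/\mathrm{diam}(\Omega)}\bigr|\;\le\;C(n,\Omega)\int_{\p\Omega}(u_\nu^+)^n\,d\sigma,
\]
i.e., $(\min_{\p\Omega}\varphi-m)^n\le C(n,\Omega)\int_{\p\Omega}(u_\nu^+)^n d\sigma$, from which (iv) follows after combining with the upper bound on $\max u$.

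The main obstacle is (iv): it is the only genuinely nonlocal ingredient, and the subtle point is showing that the estimate involves only the one-sided normal derivative $u_\nu^+$---rather than the full boundary gradient $|Du|_{\p\Omega}|$---even though the subgradient image is a priori a general subset of $\R^n$. Uniform convexity of $\Omega$ is essential, as it ensures that subgradients achieved at boundary contact points point into $\Omega$ and are therefore controlled by the positive outward normal derivative via the boundary curvature.
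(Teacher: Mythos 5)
The construction of $\tilde u=\varphi+M\rho$ and the verification of (i)--(iii) are fine: with $\rho$ a smooth uniformly convex defining function for $\Omega$, $D^2\tilde u\ge I$ holds by the choice of $M$, $\tilde u\in W^{4,q}$, and the chain-rule computation for $\tilde w_{ij}$ lands in $L^q$ since $d=\det D^2\tilde u$ stays in a compact subinterval of $(0,\infty)$, $D^3\tilde u\in L^\infty$ by Sobolev embedding, and $D^4\tilde u\in L^q$. (The lemma itself is cited from an earlier reference rather than reproved in the paper, so I can only check your argument against what the statement requires.)

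Your proof of (iv), however, has a real gap. The Aleksandrov subgradient-image argument you sketch cannot be made to localize the contact point to $\partial\Omega$. Recall how that machine works: for $\xi$ with $|\xi|<(\min_{\partial\Omega}\varphi-m)/\mathrm{diam}(\Omega)$ you slide the affine function down until $\ell_\xi-c\le u$ in all of $\overline\Omega$ with equality at some $\bar x$, and then $\xi\in\partial u(\bar x)$. But $\bar x$ is simply a minimizer of $u-\ell_\xi$, and nothing forces it to lie on $\partial\Omega$. Consider $u(x)=|x|^2/2$ on $B_1$ with $\varphi\equiv 1/2$: $m=0$ at $x_0=0$, and for $|\xi|<1/2$ the touching point is $\bar x=\xi$, which is interior. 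So the image $B_r$ you produce sits in $\bigcup_{x\in\overline\Omega}\partial u(x)$ and the resulting estimate is the interior Aleksandrov bound $|B_r|\le\int_\Omega\det D^2 u$, not a boundary integral. Invoking uniform convexity of $\Omega$ does not repair this; and in fact (iv) does not even require uniform convexity. (Also, as written the ``slide $\ell_\xi$ upward by $c>0$ until first contact'' step is ill-posed, since $\ell_\xi$ already touches $u$ at $x_0$ when $c=0$.)

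The correct argument for (iv) is more elementary and bypasses subgradient images entirely. Let $m=\min_{\overline\Omega}u$ be attained at $x_0$; if $x_0\in\partial\Omega$ then $m\ge\min_{\partial\Omega}\varphi\ge -C$ and there is nothing to prove, so assume $x_0\in\Omega$. For any $y\in\partial\Omega$, convexity gives $u(x_0)\ge u(y)+Du(y)\cdot(x_0-y)$, i.e.\ $Du(y)\cdot(y-x_0)\ge\varphi(y)-m$. Decomposing $Du(y)=u_\nu(y)\,\nu(y)+D_\tau\varphi(y)$ (the tangential part is fixed by $u=\varphi$ on $\partial\Omega$), and using $|D_\tau\varphi\cdot(y-x_0)|\le\|D\varphi\|_{L^\infty}\,\mathrm{diam}(\Omega)\le C$ and $\varphi(y)\ge -C$, one gets
\[
u_\nu(y)\,\nu(y)\cdot(y-x_0)\ \ge\ -m - C.
\]
Since $\Omega$ is convex and $x_0$ is interior, $0<\nu(y)\cdot(y-x_0)\le\mathrm{diam}(\Omega)$ for every $y\in\partial\Omega$. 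Hence if $-m>C$ this forces $u_\nu(y)>0$, i.e.\ $u_\nu(y)=u_\nu^+(y)$, and
\[
u_\nu^+(y)\ \ge\ \frac{-m-C}{\mathrm{diam}(\Omega)}\qquad\text{for all } y\in\partial\Omega.
\]
Raising to the $n$-th power and integrating over $\partial\Omega$ yields $(-m-C)^n\le C(n,\Omega)\int_{\partial\Omega}(u_\nu^+)^n$. Combined with $\max_{\overline\Omega}u\le\max_{\partial\Omega}\varphi\le C$ (from convexity), this gives exactly the claimed bound. Note that the passage from the full gradient to $u_\nu^+$ is automatic here because the tangential part of $Du$ on $\partial\Omega$ is prescribed by $\varphi$, not because of any boundary localization of contact points.
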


\begin{proof}[Proof of Lemma \ref{ubound}]  The proof is similar to \cite[Lemma 2.2]{Le_JDE}. Let $u_{\nu}^+ =\max (0, u_{\nu})$. Since $u$ is convex with boundary value $\varphi$ on $\p\Omega$,we have 
 \begin{equation}
 \label{lowerunu}
 u_\nu\geq -\|D\varphi\|_{L^{\infty}(\Omega)}.
 \end{equation}
Our goal is reduced to showing that
\begin{equation}
\label{ubound_red}
\int_{\p \Omega}(u^{+}_{\nu})^n \leq C
\end{equation}
because the universal $L^{\infty}$ bound for $u$ follows from Lemma \ref{geoc} (iv).  

Let $G(t)=\frac{t^{\theta}-1}{\theta}$ for $t>0$ (when $\theta=0$, we set $G(t)=\log t$). Then $G'(t)= t^{\theta-1}$ for all $t>0$ and $w= G^{'}(\det D^2 u)$ in $\Omega$.

Let $\tilde u\in W^{4, q}(\Omega)$ be as in Lemma \ref{geoc}.
The function $\tilde G(d):= G(d^n)$ on $(0,\infty)$ is strictly concave because
$$\tilde G^{''}(d)= n^2 d^{n-2} \left[G^{''}(d^n) d^n + (1-\frac{1}{n})G^{'}(d^n)\right]< 0.$$
Using this, $G'>0$, and the concavity of the map $M\longmapsto (\det M)^{1/n}$ in the space of symmetric matrices $M\geq 0$, we obtain
\begin{eqnarray*}
 \tilde G((\det D^2 \tilde u)^{1/n}) -\tilde G((\det D^2  u)^{1/n})&\leq& \tilde G^{'}((\det D^2 u)^{1/n})((\det D^2 \tilde u)^{1/n}-(\det D^2 u)^{1/n})\\
 &\leq & \tilde G^{'}((\det D^2 u)^{1/n})\frac{1}{n} (\det D^2 u)^{1/n-1} U^{ij} (\tilde u-u)_{ij} .
\end{eqnarray*}
Since $\tilde G^{'}((\det D^2 u)^{1/n}) = n G^{'}(\det D^2 u) (\det D^2 u)^{\frac{n-1}{n}}$, we rewrite the above inequalities as
\begin{equation}
\label{Ga}
G(\det D^2 \tilde u)- G(\det D^2 u)\leq wU^{ij}(\tilde u- u)_{ij}.
\end{equation}
Similarly, for $\tilde w= G^{'}(\det D^2 \tilde u)$, we have
\begin{equation}
\label{Gb}
G(\det D^2 u)- G(\det D^2 \tilde u)\leq \tilde w \tilde U^{ij}(u- \tilde u)_{ij}.
\end{equation}
Adding (\ref{Ga}) and (\ref{Gb}), integrating by parts twice and using the fact that $(U^{ij})$ is divergence free, we obtain
\begin{eqnarray}
0&\geq&  \int_{\Omega} wU^{ij}(u-\tilde u)_{ij} + \tilde w \tilde U^{ij}(\tilde u-u)_{ij}
\nonumber \\
&=&
  \int_{\partial \Omega} w U^{ij} (u_j-\tilde{u}_j ) \nu_i + \int_{\Omega} U^{ij} w_{ij}(u-\tilde{u}) + 
 \int_{\partial \Omega} \tilde w \tilde{U}^{ij} ( \tilde{u}_j-u_j) \nu_i + \int_{\Omega} \tilde U^{ij} \tilde w_{ij} (\tilde{u}-u)\nonumber\\
  &=&\int_{\partial \Omega} (\psi U^{ij}-\tilde w\tilde U^{ij}) (u_j-\tilde{u}_j ) \nu_i + \int_{\Omega} f_\delta(\cdot, u, Du, D^2 u)(u-\tilde{u})+ \int_{\Omega} \tilde U^{ij} \tilde w_{ij} (\tilde{u}-u).
 \label{concave}
\end{eqnarray}
Let us analyze the boundary terms in (\ref{concave}). Since $u-\tilde u=0$ on $\p \Omega$, we have $(u-\tilde u)_j= (u-\tilde u)_{\nu} \nu_j$, and hence
$$U^{ij}(u-\tilde u)_j \nu_i= U^{ij}\nu_j \nu_i (u-\tilde u)_{\nu} = U^{\nu\nu}(u-\tilde u)_{\nu}$$
where
$$U^{\nu\nu} = \det D^2_{x'} u$$
with $x'\perp \nu$ denoting the tangential directions along $\p \Omega$. Therefore, 
\begin{equation} \label{nunu}
(\psi U^{ij}-\tilde w\tilde U^{ij}) (u_j - \tilde{u}_j) \nu_i = (\psi U^{\nu\nu} -\tilde w \tilde U^{\nu\nu})(u_{\nu} - \tilde{u}_{\nu}).
\end{equation}
 {\it To simplify notation, we use $f_\delta$ to denote $f_\delta(\cdot, u, Du, D^2 u)$ when there is no confusion.}\\
 By Lemma \ref{geoc}, the quantities $\tilde u$, $\tilde{u}_{\nu}$, $\tilde{U}^{\nu\nu}$ and $\|\tilde U^{ij}\tilde w_{ij}\|_{L^1(\Omega)} $are universally bounded.
These bounds combined with (\ref{concave}) and (\ref{nunu}) give
\begin{equation}
\label{bdr_cl1}
\int_{\p\Omega} \psi U^{\nu\nu} u_{\nu} 
 \leq C + C\|u\|_{L^{\infty}(\Omega)} + C\int_{\p\Omega} (|U^{\nu\nu}| +|u_\nu|)+\int_{\Omega}-f_\delta (u-\tilde u)dx.
\end{equation}
On the other hand, from $u-\varphi=0$ on $\p \Omega$, we have, with respect to a principle coordinate system at any point $y\in\p \Omega$ (see, e.g., 
\cite[formula (14.95) in \S 14.6]{GT})
$$D_{ij}(u-\varphi)= (u-\varphi)_{\nu}\kappa_i\delta_{ij}, i, j=1, \cdots, n-1,$$
where $\kappa_1,\cdots,\kappa_{n-1}$ denote the principle curvatures of $\p \Omega$ at $y$. 

Let $K=\kappa_1\cdots\kappa_{n-1}$ be the Gauss curvature of $\partial  \Omega$ at
$y\in\p\Omega$. Then, at any $y\in\p \Omega$, by noting that 
$\det D^2_{x'} u =\det (D_{ij}u)_{1\leq i, j\leq n-1}$ and taking the determinants of
\begin{equation}
\label{uij1}
D_{ij} u = u_{\nu}\kappa_i\delta_{ij}-\varphi_{\nu}\kappa_i\delta_{ij} + D_{ij}\varphi,
\end{equation}
we obtain, using also (\ref{lowerunu})
\begin{equation} \label{Gaussc}
U^{\nu\nu}  = K (u_{\nu})^{n-1} + E, \quad \textrm{where } |E| \le C (1+ |u_{\nu}|^{n-2}) = C(1 + (u^{+}_{\nu})^{n-2}).
\end{equation}
Now, it follows from (\ref{bdr_cl1}), (\ref{Gaussc}) and Lemma \ref{geoc}(iv) that
\begin{eqnarray}
\label{concave2}
 \int_{\partial \Omega} K \psi u_{\nu}^n  &\le& C + C\|u\|_{L^{\infty}(\Omega)}+ C \int_{\partial \Omega} (u_{\nu}^{+})^{n-1}  +  \int_{\Omega} -f_\delta (u-\tilde u)dx\nonumber\\
 &\leq&  C + C\left(
\int_{\p \Omega}  (u_{\nu}^+)^n \right)^{(n-1)/n} +  \int_{\Omega} -f_\delta (u-\tilde u)dx.
\end{eqnarray}

We will analyze the last term on the right hand side of (\ref{concave2}).\\
\noindent
By construction, $\|\tilde u\|_{L^{\infty}(\Omega)}\leq C$ and $\|\varphi\|_{L^{\infty}(\Omega)} \leq C$. Thus, we have
\begin{equation}
\label{uvarphi1}
 -\frac{1}{\delta}(u-\varphi) (u-\tilde u) 
  \leq -\frac{|u|^2}{2\delta} + C(\delta)
\end{equation}
where $C(\delta)>0$ is a universal constant.
Using (\ref{AsF0}), we can estimate
\begin{multline}
\label{A_est}
A:=\int_{\Omega_0}- f^0(x, u) (u-\tilde u) dx
\leq   \int_{\Omega_0}- f^0(x, \tilde u) (u-\tilde u) dx-\rho\int_{\Omega_0}|u-\tilde u|^2 dx\\
\leq \int_{\Omega_0} \eta (|\tilde u|)|u-\tilde u| dx-\rho\int_{\Omega_0}|u-\tilde u|^2 dx\leq C\int_{\Omega_0} |u| dx+ C-\rho\int_{\Omega_0}|u-\tilde u|^2 dx.
\end{multline}
\noindent
{\it Step 1: Estimate $\int_{\Omega_0}- f_{\delta} (u-\tilde u) dx$ by expansion.} By the convexity of $u$ and $F^1(x, p)$ in $p$, we have $F^{1}_{p_i p_j} u_{ij}\geq 0$. Moreover, $u\leq \sup_{\p\Omega}\varphi\leq C$ and $|\tilde u|\leq C$. Thus, recalling (\ref{AsH}), we find that
 \begin{equation}
 \label{F1del}
 F^{1}_{p_i p_j} u_{ij} (u-\tilde u) \leq C F^{1}_{p_i p_j} u_{ij} \leq C C_\ast \Delta u.
 \end{equation}
 On the other hand, for any $i=1,\cdots, n$, using (\ref{AsH}) and the first inequality in (\ref{Duu}), we can bound  $F^1_{p_i x_i} (x, Du(x)) (u-\tilde u )$ in $\Omega_0$  by
 \begin{multline}
 \label{F1Du}
 |F^1_{p_i x_i} (x, Du(x)) (u(x)-\tilde u(x) )| \leq (c_0 |Du(x)|+ C_\ast)|u(x)-\tilde u(x)| \\ \leq( c_0 C(\Omega_0,\Omega)
|u(x)| + C) (|u(x)| + C)
\leq c_0 C(\Omega_0, \Omega)|u(x)|^2 + C|u(x)| + C.
\end{multline}
From (\ref{A_est}), (\ref{F1del}) and (\ref{F1Du}) together with the divergence theorem, we find that
\begin{eqnarray}
\label{Step2_exp}
\int_{\Omega_0}- f_{\delta} (u-\tilde u) dx &=& \int_{\Omega_0}[- f^0(x, u(x)) + \div (\nabla_p F^1(x, Du(x)))] (u-\tilde u)dx\nonumber\\
&=& A + \int_{\Omega_0} (F^1_{p_i x_i} (x, Du(x)) + F^1_{p_i p_j} u_{ij}) (u-\tilde u) dx\nonumber\\
&\leq & A + \int_{\Omega_0}  (c_0 C(\Omega_0, \Omega)|u|^2 + C|u| + C) dx+ \int_{\Omega} C_\ast C \Delta u dx \nonumber\\
&\leq& C\|u\|_{L^{\infty}(\Omega)} + C + c_0 C_1(\Omega_0, \Omega)\int_{\Omega_0}|u|^2 dx -\rho\int_{\Omega_0}|u-\tilde u|^2 dx+ C\int_{\p\Omega} (u_\nu)^{+}.
\end{eqnarray}
\noindent
{\it Step 2: Estimate $\int_{\Omega_0}- f_{\delta} (u-\tilde u) dx$ by integration by parts.} 
Note that $\tilde u$ and $|D\tilde u|$ are universally bounded. Thus, using the convexity of $F^1(x, p)$ in $p$ together with (\ref{AsH1}) and (\ref{Duu}), we have the following estimates in $\Omega_0$
\begin{eqnarray}
\label{Step3_IBP1}
 -\nabla_p F^1(x, Du(x))\cdot (Du-D\tilde u) &\leq&  -\nabla_p F^1(x, D\tilde u)\cdot (Du-D\tilde u) \\ &\leq& \eta( |D\tilde u|) (C(\Omega_0,\Omega)|u(x)|+ C) \leq C|u(x)|+ C\nonumber.
\end{eqnarray}
On the other hand, also by (\ref{AsH1}) and (\ref{Duu}), we have the following estimates on $\p\Omega_0$:
\begin{eqnarray}
\label{Step3_IBP2}
(u-\tilde u)\nabla_p F^1 (x, Du(x))\cdot\nu_0 &\leq& (\bar c_0|Du| + \bar C_\ast)|u-\tilde u| \nonumber\\ &\leq&  \bar c_0 C(\Omega_0,\Omega) \|u\|^{2}_{L^{\infty}(\Omega)} + (\bar c_0+\bar C_\ast)C(\Omega_0,\Omega)\|u\|_{L^{\infty}(\Omega)} + C.
\end{eqnarray}
Now, integrating by parts and using (\ref{A_est}) together with (\ref{Step3_IBP1}) -(\ref{Step3_IBP2}), we obtain
\begin{multline}
\label{Step3_IBP3}
\int_{\Omega_0}- f_{\delta}(u-\tilde u)= \int_{\Omega_0}- f^0(x, u(x)) (u-\tilde u) + \int_{\Omega_0}\div (\nabla_p F^1(x, Du(x))) (u-\tilde u)\\
= A + \int_{\p\Omega_0} (u-\tilde u)\nabla_p F^1 (x, Du(x))\cdot\nu_0 + \int_{\Omega_0} -\nabla_p F^1(x, Du(x))\cdot (Du(x)-D\tilde u(x)) \\ 
\leq C + (\bar c_0+\bar C_\ast)C(\Omega_0,\Omega)\|u\|_{L^{\infty}(\Omega)} + C\int_{\Omega_0}|u| dx-\rho\int_{\Omega_0}|u-\tilde u|^2~dx  + \bar c_0 C_2(\Omega_0,\Omega)\|u\|^2_{L^{\infty}(\Omega)}.
\end{multline}
{\it Step 3: $n\geq 3$.} From (\ref{Step3_IBP3}) and (\ref{uvarphi1}), and recalling Lemma \ref{geoc} (iv), we have
\begin{eqnarray*}
\int_{\Omega} -f_\delta (u-\tilde u) = \int_{\Omega\setminus\Omega_0} -\frac{1}{\delta}(u-\varphi)(u-\tilde u)  + \int_{\Omega_0} - f_{\delta}(u-\tilde u)
&\leq& C+ C|u\|_{L^{\infty}(\Omega)} + C\|u\|^2_{L^{\infty}(\Omega)} \\ &\leq& C+ C_2 \left(
\int_{\p \Omega}  (u_{\nu}^+)^n \right)^{2/n} .
\end{eqnarray*}
From $n\geq 3$, (\ref{concave2}), (\ref{lowerunu}) and the above estimates, we obtain
\begin{eqnarray*} 
 \int_{\partial \Omega} K \psi (u^{+}_{\nu})^n  \le C + C\left(\int_{\partial \Omega} (u_{\nu}^{+})^{n}\right)^{(n-1)/n} +  \int_{\Omega} -f_\delta (u-\tilde u)
 \leq C  + C\left(
\int_{\p \Omega}  (u_{\nu}^+)^n \right)^{(n-1)/n}.
\end{eqnarray*} 
From H\"older inequality, $n\geq 3$, and $\inf_{\p\Omega}K\psi>0$, we easily obtain
$\int_{\partial \Omega} (u^{+}_{\nu})^n \le C$ which is (\ref{ubound_red}).

{\it For the rest of the proof of this lemma, we focus on the more difficult case of $n=2$.} We will now use the condition {\bf (SC)}.\\
\noindent
{\it  Step 4: when
$\min\{c_0, \bar c_0\}$ is sufficiently small (depending only on $\inf_{\p \Omega}\psi$, $\Omega_0$ and $\Omega$).}
From (\ref{Step2_exp}) and (\ref{Step3_IBP3}), and recalling (\ref{uvarphi1}),
 we obtain for $\hat c_0=\min\{c_0, \bar c_0\}$
\begin{eqnarray}
\label{n2twocases}
\int_{\Omega} -f_\delta (u-\tilde u)& \leq &  C\|u\|_{L^{\infty}(\Omega)} + C + \hat c_0 C_3(\Omega_0,\Omega)\|u\|^2_{L^{\infty}(\Omega)} +  C\int_{\p\Omega} (u_\nu)^{+} + \int_{\Omega\setminus\Omega_0} -\frac{1}{\delta}(u-\varphi) (u-\tilde u)\nonumber \\
& \leq & C\|u\|_{L^{\infty}(\Omega)} + C + \hat c_0 C_3(\Omega_0,\Omega)\|u\|^2_{L^{\infty}(\Omega)} + C\int_{\p\Omega} (u_\nu)^{+}.
\end{eqnarray}
From (\ref{concave2}), (\ref{n2twocases}) and $n=2$, we deduce from Lemma \ref{geoc} (iv) that
\begin{eqnarray} 
\label{casen2}
 \int_{\partial \Omega} K \psi u_{\nu}^2  &\le& C + C \left(\int_{\partial \Omega} (u_{\nu}^{+} )^2\right)^{1/2} +  \int_{\Omega} -f_\delta (u-\tilde u)\nonumber\\
 &\leq& C \left(\int_{\partial \Omega} (u_{\nu}^{+} )^2\right)^{1/2}  + C + \hat c_0 C_4(\Omega_0,\Omega)\int_{\partial \Omega} (u_{\nu}^{+})^2.
\end{eqnarray} 
Suppose that $\hat c_0$ is small, say  $$\hat c_0 C_4(\Omega_0,\Omega)<(1/2) ( \inf_{\p\Omega}\psi) ( \inf_{\p\Omega}K).$$
Then it follows from (\ref{casen2}), H\"older inequality and $\inf_{\p\Omega} \psi>0$ that
$\int_{\partial \Omega} (u^{+}_{\nu})^2 \le C$. Thus (\ref{ubound_red}) is proved.\\
{\it Step 5: When $\min\{\rho,\frac{1}{\delta}\}$ is sufficiently large (depending only on $\min\{c_0,\bar c_0\},\Omega_0$ and $\Omega$).}\\
{\bf Case 1: $c_0\leq \bar c_0.$} In this case, we use (\ref{Step2_exp}). 
Suppose that $\rho> 4 c_0 C_1(\Omega_0,\Omega)+1>0$. 
Then, 
\begin{equation}
\label{rho_pos}
-\rho\int_{\Omega_0}|u-\tilde u|^2 dx \leq  -\frac{\rho}{2}\int_{\Omega_0}|u|^2 dx + C(\rho)\leq -2c_0 C_1(\Omega_0,\Omega)\int_{\Omega_0}|u|^2 dx + C.
\end{equation}
Combining (\ref{Step2_exp}) and (\ref{rho_pos}) with (\ref{uvarphi1}) and Lemma \ref{geoc}(iv), we get
\begin{multline}
\label{rho_pos2}
\int_{\Omega} -f_\delta (u-\tilde u) =   \int_{\Omega_0}- f_{\delta} (u-\tilde u) + \int_{\Omega\setminus\Omega_0} -\frac{1}{\delta}(u-\varphi) (u-\tilde u)
\leq C+ C \left(
\int_{\p \Omega}  (u_{\nu}^+)^n \right)^{1/n}.
\end{multline}
Thus, for $n=2$, we deduce from (\ref{concave2}) and (\ref{rho_pos2}) that
\begin{eqnarray}
\label{rho_pos3}
\int_{\partial \Omega} K \psi u_{\nu}^2  \leq C + C \left(
\int_{\p \Omega}  (u_{\nu}^+)^2 \right)^{1/2} +  \int_{\Omega} -f_\delta (u-\tilde u)
\leq C+ C \left(
\int_{\p \Omega}  (u_{\nu}^+)^2\right)^{1/2}.
\end{eqnarray}
From (\ref{rho_pos3}), the H\"older inequality and $\inf_{\p\Omega} \psi>0$, we easily obtain
$\int_{\partial \Omega} (u^{+}_{\nu})^n \le C$ and (\ref{ubound_red}) follows.\\
{\bf Case 2: $ \bar c_0\leq c_0$.}  From  (\ref{Step3_IBP3}), Lemma \ref{ucon_lem} (i), and recalling (\ref{uvarphi1}), we obtain 
\begin{multline}
\label{barc0}
\int_{\Omega} -f_\delta (u-\tilde u)\leq   C\|u\|_{L^{\infty}(\Omega)} + C + \bar c_0 C_3(\Omega_0,\Omega)\|u\|^2_{L^{\infty}(\Omega)} \\ -\rho\int_{\Omega_0}|u-\tilde u|^2 dx+ \int_{\Omega\setminus\Omega_0} -\frac{1}{\delta}(u-\varphi) (u-\tilde u) \\
\leq  C\|u\|_{L^{\infty}(\Omega)} + C + \bar c_0 C_4 (\Omega_0,\Omega)\int_{\Omega} |u|^2 dx  -\frac{\rho}{2}\int_{\Omega_0}|u|^2 dx+ \int_{\Omega\setminus\Omega_0} -\frac{|u|^2}{2\delta}.
\end{multline}
Thus, if $\min\{\rho, \frac{1}{\delta}\}>4 \bar c_0 C_4 (\Omega_0,\Omega)$, then (\ref{barc0}) gives
\begin{equation}
\label{barc02}
\int_{\Omega} -f_\delta (u-\tilde u)\leq   C\|u\|_{L^{\infty}(\Omega)} + C.
\end{equation} 
Now, for $n=2$, we deduce from (\ref{concave2}), (\ref{barc02}) and Lemma \ref{geoc}(iv) that
\begin{equation}
\label{barc03}
\int_{\partial \Omega} K \psi u_{\nu}^2  \leq C + C \left(
\int_{\p \Omega}  (u_{\nu}^+)^2 \right)^{1)/2} +  \int_{\Omega} -f_\delta (u-\tilde u)
\leq C+ C \left(
\int_{\p \Omega}  (u_{\nu}^+)^2 \right)^{1/2}.
\end{equation}
From (\ref{barc03}), the H\"older inequality and $\inf_{\p\Omega} \psi>0$, we easily obtain
$\int_{\partial \Omega} (u^{+}_{\nu})^2 \le C$. 
This completes the proof of (\ref{ubound_red}) in all cases.
\end{proof}

In the following lemma, we establish universal a priori estimates for solutions to (\ref{Abreu2e})-(\ref{fdele}). 
These estimates do not depend on $\e$.
\begin{lem}
\label{lem_uep}
Let  $n=2$, $q>n$ and
$0\leq \theta<1/n$.
Assume that (\ref{AsF0})-(\ref{AsH1})  are satisfied, and $\rho>0$.  
Assume that $\varphi\in W^{4,q}(\Omega)$ with $\inf_{\Omega}\det D^2\varphi>0$, and $\psi\in W^{2,q}(\Omega)$
with $\inf_{\p \Omega}\psi>0$. Assume that one of the following conditions holds:
\begin{myindentpar}{1cm}
(i) $\bar c_0=\bar C_\ast =0$.\\
(ii)  $\rho$ is large and $\e$ is small ( depending only on $\bar c_0 +\bar C_\ast$, $\Omega_0$ and $\Omega$).
\end{myindentpar}
Let $u_\e \in W^{4,q}(\Omega)$ be a uniformly convex solution to the system (\ref{Abreu2e})-(\ref{fdele}).  
Then, there is a universal constant $C>0$ (depending also on  $\inf_{\Omega}\det D^2\varphi$ but independent of $\e$) such that
\begin{eqnarray} 
\label{uep_ineq}
 \int_{\partial \Omega} \e  (u_\e)_{\nu}^2  + \rho \int_{\Omega_0}|u_\e-\varphi|^2 dx +    \int_{\Omega\setminus\Omega_0} \frac{1}{\e}|u_\e-\varphi|^2 dx \leq C.
\end{eqnarray} 
\end{lem}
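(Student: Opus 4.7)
The plan is to mimic the proof of Lemma \ref{ubound} but, crucially, replace the auxiliary function $\tilde u$ from Lemma \ref{geoc} by the data $\varphi$ itself, which is now permissible because $\varphi$ is uniformly convex with $\inf_\Omega \det D^2 \varphi > 0$. This substitution is what allows the symmetric difference $u_\e-\varphi$ to appear in both the boundary and bulk integrals simultaneously, producing the three good terms in (\ref{uep_ineq}).

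First I would apply the concavity inequalities (\ref{Ga}) and (\ref{Gb}) with $u=u_\e$ and $\tilde u = \varphi$, multiply by $\e$, add them, and integrate. After two integrations by parts using the divergence-free property of $U_\e$ and the boundary condition $u_\e=\varphi$ on $\p\Omega$, and using $\e U_\e^{ij}(w_\e)_{ij}=f_\e$, this produces
$$\int_{\p\Omega}\e\psi(U_\e^{ij}-\Phi^{ij})(u_\e-\varphi)_j\nu_i+\int_\Omega f_\e(u_\e-\varphi)+\int_\Omega \e\,\Phi^{ij}\tilde w_{ij}(\varphi-u_\e)\le 0,$$
where $\Phi^{ij}$ is the cofactor matrix of $D^2\varphi$ and $\tilde w=(\det D^2\varphi)^{\theta-1}$; the last integrand is a universally bounded $L^1$ quantity because $\inf_\Omega\det D^2\varphi>0$. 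Using (\ref{nunu}) and the $n=2$ expansion (\ref{Gaussc}), namely $U^{\nu\nu}=Ku_\nu+E$ with $|E|\le C$, the boundary term splits as $\int_{\p\Omega}\e\psi K((u_\e)_\nu-\varphi_\nu)^2$ plus a lower-order error controlled by $C\e\int_{\p\Omega}|(u_\e)_\nu-\varphi_\nu|$, which is absorbed by Cauchy-Schwarz.

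Next I would expand $-\int_\Omega f_\e(u_\e-\varphi)$ in the spirit of Steps 1-2 in the proof of Lemma \ref{ubound}. The outer region yields directly $-\frac{1}{\e}\int_{\Omega\setminus\Omega_0}|u_\e-\varphi|^2$. On $\Omega_0$, the monotonicity (\ref{AsF0}) applied with $\tilde z=\varphi(x)$ gives
$$-\int_{\Omega_0}f^0(x,u_\e)(u_\e-\varphi)\le -\rho\int_{\Omega_0}|u_\e-\varphi|^2+C\bigl(1+\|u_\e\|_{L^\infty(\Omega)}\bigr),$$
while integration by parts in the divergence term, combined with the convexity of $F^1$ in $p$, (\ref{AsH}), (\ref{AsH1}), and the convex gradient bound (\ref{Duu}), produces an interior contribution $\le C(1+\|u_\e\|_{L^\infty(\Omega)})$ and a boundary contribution on $\p\Omega_0$ $\le C(\bar c_0+\bar C_\ast)(1+\|u_\e\|_{L^\infty(\Omega)}^2)$. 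Putting everything together yields
$$\tfrac{1}{2}\!\!\int_{\p\Omega}\!\!\e\psi K((u_\e)_\nu-\varphi_\nu)^2+\rho\!\!\int_{\Omega_0}\!\!|u_\e-\varphi|^2+\tfrac{1}{\e}\!\!\int_{\Omega\setminus\Omega_0}\!\!|u_\e-\varphi|^2\le C(\bar c_0+\bar C_\ast)\|u_\e\|_{L^\infty(\Omega)}^2+C\bigl(1+\|u_\e\|_{L^\infty(\Omega)}\bigr).$$

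The main obstacle, and the reason the size conditions (i) or (ii) are needed, is closing this estimate against the quadratic $\|u_\e\|_{L^\infty(\Omega)}^2$ on the right. For this I would use Lemma \ref{ucon_lem}(i) to get $\|u_\e\|_{L^\infty(\Omega)}^2\le C+C\int_\Omega|u_\e-\varphi|^2\le C+C\max(1/\rho,\e)\,Y$, where $Y$ denotes the sum of the second and third terms on the left. In case (i), $\bar c_0=\bar C_\ast=0$ kills the quadratic term and the inequality $Y\le C+CY^{1/2}$ closes directly. In case (ii), choosing $\rho$ sufficiently large and $\e$ sufficiently small (depending on $\bar c_0+\bar C_\ast$, $\Omega_0,\Omega$) so that $C(\bar c_0+\bar C_\ast)\max(1/\rho,\e)\le 1/2$ permits absorption of the bad quadratic term into the left-hand side. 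In either case $Y$ is universally bounded, $\|u_\e\|_{L^\infty(\Omega)}\le C$ follows, and the final boundary term in (\ref{uep_ineq}) is then recovered from $(u_\e)_\nu^2\le 2((u_\e)_\nu-\varphi_\nu)^2+C$ together with $|\p\Omega|<\infty$.
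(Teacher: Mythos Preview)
Your proposal is correct and follows essentially the same route as the paper's proof: the key idea of replacing the auxiliary $\tilde u$ from Lemma~\ref{geoc} by $\varphi$ itself (permitted by $\inf_\Omega\det D^2\varphi>0$) so that the difference $u_\e-\varphi$ appears in both the bulk and the outer-region term, followed by integration by parts of the $F^1$ contribution and the two-case absorption argument, is exactly what the paper does. The only organizational difference is that the paper absorbs the linear term $C\|u_\e\|_{L^\infty}$ earlier via Young's inequality against $\tfrac{\rho}{2}\int_{\Omega_0}|u_\e-\varphi|^2$ (using $\rho>0$) and converts $\|u_\e\|_{L^\infty}^2$ to $\int_\Omega|u_\e-\varphi|^2$ via Lemma~\ref{ucon_lem}(i) before stating the final inequality, whereas you carry the linear term to the end and close with $Y\le C+CY^{1/2}$; both are equivalent.
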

\begin{proof}
Let $ \hat C_\ast:=\bar c_0+ \bar C_\ast$. Let $\bar u=\varphi$ and $\bar f=  \bar U^{ij} \bar w_{ij}$ where $\bar w=(\det D^2 \bar u)^{\theta-1}$ and $\bar U=(\bar U^{ij})$ is the cofactor matrix of $D^2\bar u$. Since $q>n$, and $\bar u\in W^{4, q}(\Omega)$ with $\inf_{\Omega}\det D^2 \bar u>0$, we have
\begin{equation}
\label{barfL1}
\|\bar f\|_{L^1(\Omega)}\leq C.
\end{equation}
We redo the estimates in Lemma \ref{ubound} for $u_\e$ where we replace $\tilde u$ by $\bar u$.
First, 
(\ref{concave}) becomes
\begin{equation}
\label{concave2e}
 \int_{\partial \Omega} w_\e U_\e^{ij} ((u_\e)_j-\bar{u}_j ) \nu_i + \int_{\Omega} U_\e^{ij} (w_\e)_{ij}(u_\e-\bar{u}) + 
 \int_{\partial \Omega} \bar w \bar{U}^{ij} ( \bar{u}_j-(u_\e)_j) \nu_i + \int_{\Omega} \bar U^{ij} \bar w_{ij} (\bar{u}-u_\e)\leq 0.
\end{equation}
 To simplify notation, we use $f_{\e}$ to denote $f_{\e}(\cdot, u_\e, Du_\e, D^2 u_\e)$.
 
From $U_\e^{ij} (w_{\e})_{ij}=\e^{-1} f_{\e}$, Lemma \ref{geoc}(iv), (\ref{barfL1}), and the uniform boundedness of $\bar u, \bar w, D\bar u$, 
(\ref{concave2e}) becomes 
\begin{eqnarray}
\label{concave2p}
 \int_{\partial \Omega}  (u_\e)_{\nu}^2  &\le&  C \left(\int_{\partial \Omega} (u_\e)_{\nu}^{2}\right)^{1/2}   +  \int_{\Omega} -\e^{-1}f_{\e} (u_\e-\bar u) dx + C.
\end{eqnarray}
This estimate is similar to (\ref{concave2}) where now we also use $\inf_{\p \Omega}\psi>0$ and the uniform convexity of $\p\Omega$ to absorb $\inf_{\p \Omega}\psi>0$ and the curvature  of $\p\Omega$ to the right hand side
of (\ref{concave2}). From Young's inequality, we can absorb the first  term on the right hand side of (\ref{concave2p}) to its left hand side.  Then, multiplying both sides by $\e$, we get
\begin{eqnarray}
\label{concave3p}
 \int_{\partial \Omega} \e  (u_\e)_{\nu}^2  &\le& C +C \int_{\Omega} -f_{\e} (u_\e-\bar u).
\end{eqnarray}
As in the estimates for $A$ in (\ref{A_est}), we have
\begin{eqnarray*}
A_\e:=\int_{\Omega_0}- f^0(x, u_\e(x)) (u_\e-\bar u) dx \leq  C +  C\int_{\Omega_0}|u_\e| dx -\rho\int_{\Omega_0}|u_\e-\bar u|^2 dx.
\end{eqnarray*}
From Lemma \ref{ucon_lem} (i)  and the inequality $|u_\e|^2 \leq 2(|u_\e-\varphi|^2 +|\varphi|^2)$, we have 
\begin{equation}
\label{ue_max2}
\|u_\e\|^2_{L^{\infty}(\Omega)} \leq C (\Omega) \int_{\Omega} |u_\e|^2 dx + C \leq  C (\Omega) \int_{\Omega} |u_\e-\varphi|^2 dx + C.
\end{equation}
Using (\ref{Step3_IBP3}) to $u_\e, f_\e, \bar u$ and taking into account (\ref{ue_max2}), $\rho>0$ and  $ \hat C_\ast:=\bar c_0+ \bar C_\ast$, we have
\begin{eqnarray}
\label{ubar0}
\int_{\Omega_0}- f_{\e} (u_\e-\bar u)&\leq& C + C \int_{\Omega_0}|u_\e| dx  -\rho\int_{\Omega_0}|u_\e-\bar u|^2 dx +  \hat C_\ast C_5(\Omega_0,\Omega) \left(1+ \|u_\e\|^{2}_{L^{\infty}(\Omega)}\right)  \nonumber\\ &\leq&C  -\frac{\rho}{2}\int_{\Omega_0}|u_\e-\bar u|^2 dx +  \hat C_\ast
C_6(\Omega_0,\Omega) \int_{\Omega} |u_\e-\varphi|^2 dx.
\end{eqnarray}
It follows from (\ref{concave3p}), (\ref{ubar0}), $\bar u=\varphi$ in $\Omega$, and $f_\e=\frac{1}{\e} (u_\e-\varphi)$ on $\Omega\setminus\Omega_0$ that
\begin{eqnarray} 
\label{ep_apriori}
\int_{\partial \Omega} \e  (u_\e)_{\nu}^2  &\le& C  +  \int_{\Omega} -f_{\e} (u_\e-\bar u) dx= C + \int_{\Omega_0}- f_{\e} (u_\e-\bar u)+ \int_{\Omega\setminus\Omega_0} -f_{\e} (u_\e-\bar u) dx\nonumber\\
 &\leq& C   -\frac{\rho}{2}\int_{\Omega_0}|u_\e-\varphi|^2 dx +  \int_{\Omega\setminus\Omega_0} -\frac{1}{\e}(u_\e-\varphi)^2 dx +  \hat C_\ast C_6(\Omega_0,\Omega)  \int_{\Omega} |u_\e-\varphi|^2 dx.
\end{eqnarray} 
{\it Case 1: $\rho>0$ and $\bar c_0=\bar C_\ast =0$.} In this case, $\hat C_\ast=0$ in (\ref{ep_apriori}) and (\ref{uep_ineq}) follows from this inequality.\\
\noindent
{\it Case 2: $\rho$ is sufficient large and $\e$ is sufficiently small (depending only on $\bar c_0 +\bar C_\ast$, $\Omega_0$ and $\Omega$).} When $$\min\{\frac{1}{\e}, \rho\}> 4\hat C_\ast C_6(\Omega_0,\Omega)+2,$$ then clearly (\ref{ep_apriori}) gives (\ref{uep_ineq}).
\end{proof}
We prove the uniqueness part of Theorem \ref{SBV1} in the following lemma.
\begin{lem}
\label{uni_lem}
Let $n\geq 2$, $q>n$ and $0\leq \theta<1/n$.
Assume that $\varphi\in W^{4,q}(\Omega)$ and $\psi\in W^{2,q}(\Omega)$ with $\inf_{\p \Omega}\psi>0$. Assume that (\ref{AsF0})-(\ref{AsH1})  are satisfied with $\bar c_0=\bar C_\ast =0$. 
Then the problem (\ref{Abreu2})-(\ref{fdel}) has at most one uniformly convex solution $u\in W^{4,q}(\Omega)$.
\end{lem}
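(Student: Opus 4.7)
Suppose, for contradiction, that $u_1, u_2 \in W^{4,q}(\Omega)$ are two uniformly convex solutions of (\ref{Abreu2})-(\ref{fdel}), and write $w_k = (\det D^2 u_k)^{\theta-1}$ and $U_k = (U_k^{ij})$ for the cofactor matrix of $D^2 u_k$, $k = 1, 2$. My plan is to rerun the concavity/Green's identity trick driving Lemma \ref{ubound}, but now with the fixed comparison $\tilde u$ replaced by one of the two putative solutions. Both solutions share the Dirichlet data $u_k = \varphi$ and $w_k = \psi$ on $\partial\Omega$, which will make the boundary bookkeeping very clean.

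The first step is to apply inequalities (\ref{Ga})--(\ref{Gb}) with $(u, \tilde u)$ replaced by $(u_2, u_1)$ and then by $(u_1, u_2)$, and add. The left-hand sides cancel and one obtains the pointwise bound
\begin{equation*}
w_1 U_1^{ij}(u_1-u_2)_{ij} + w_2 U_2^{ij}(u_2-u_1)_{ij} \leq 0.
\end{equation*}
Integrating over $\Omega$ and integrating by parts twice, using that each cofactor matrix is divergence free together with $u_1-u_2=0$ and $w_1 = w_2 = \psi$ on $\partial\Omega$, exactly as in the chain leading to (\ref{concave}), yields
\begin{equation*}
\int_{\partial\Omega} \psi \,(U_1^{\nu\nu}-U_2^{\nu\nu})\bigl((u_1)_\nu-(u_2)_\nu\bigr) + \int_\Omega \bigl(f_\delta^{(1)}-f_\delta^{(2)}\bigr)(u_1-u_2)\,dx \leq 0,
\end{equation*}
where $f_\delta^{(k)} := f_\delta(\cdot,u_k,Du_k,D^2u_k)$.

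The core of the argument is then to show that \emph{both} terms on the left are in fact $\geq 0$, forcing each to vanish. The boundary integrand is nonnegative because, by (\ref{uij1}), on $\partial\Omega$ the quantity $U_k^{\nu\nu}$ is a nondecreasing function of $(u_k)_\nu$ along the convex range (uniform convexity of $\partial\Omega$ gives $\kappa_i > 0$); in the case $n=2$ of interest this collapses cleanly to $U_1^{\nu\nu}-U_2^{\nu\nu} = \kappa\bigl((u_1)_\nu-(u_2)_\nu\bigr)$, so the integrand is $\psi\kappa\bigl((u_1)_\nu-(u_2)_\nu\bigr)^2 \geq 0$. For the interior term, I split over $\Omega_0$ and $\Omega\setminus\Omega_0$: the penalty piece yields $\tfrac{1}{\delta}\int_{\Omega\setminus\Omega_0}(u_1-u_2)^2 \geq 0$, the monotonicity assumption (\ref{AsF0}) gives $\rho\int_{\Omega_0}(u_1-u_2)^2 \geq 0$ from the $f^0$ contribution, and integrating by parts the divergence part on $\Omega_0$ produces
\begin{equation*}
\int_{\Omega_0}\bigl(\nabla_p F^1(\cdot,Du_1)-\nabla_p F^1(\cdot,Du_2)\bigr)\cdot(Du_1-Du_2)\,dx
\end{equation*}
plus a boundary term on $\partial\Omega_0$. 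The volume term is $\geq 0$ by convexity of $F^1$ in $p$, and the $\partial\Omega_0$ boundary term vanishes because the hypothesis $\bar c_0 = \bar C_\ast = 0$ in (\ref{AsH1}) forces $\nabla_p F^1(x,\cdot)\equiv 0$ for all $x\in\partial\Omega_0$.

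Since a sum of nonnegative quantities is bounded above by zero, each must vanish. From $\tfrac{1}{\delta}\int_{\Omega\setminus\Omega_0}(u_1-u_2)^2 = 0$ I get $u_1 \equiv u_2$ on $\Omega\setminus\Omega_0$, so by the $C^{3,\alpha}$ regularity coming from $W^{4,q}\hookrightarrow C^{3,\alpha}$ (valid for $q>n$) all derivatives of $u_1 - u_2$ up to order three match on $\partial\Omega_0$, yielding overdetermined Cauchy data for the fourth-order system on $\Omega_0$. If $\rho > 0$ we already get $u_1\equiv u_2$ on $\Omega_0$ directly from the vanishing of $\rho\int_{\Omega_0}(u_1-u_2)^2$; in the degenerate case the matching of full Cauchy data on $\partial\Omega_0$ propagates equality into $\Omega_0$ via unique continuation for the Monge-Amp\`ere/linearized Monge-Amp\`ere pair. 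The main technical obstacle is the careful double integration by parts that both produces the correct boundary integrand on $\partial\Omega$ and guarantees the cancellation of the $\partial\Omega_0$ boundary term arising from the $F^1$ piece, which is exactly the role played by the hypothesis $\bar c_0 = \bar C_\ast = 0$.
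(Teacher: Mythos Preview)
Your argument follows the paper's proof almost verbatim through the concavity inequality, the double integration by parts, and the decomposition of the interior term into nonnegative pieces using (\ref{AsF0}), the convexity of $F^1$ in $p$, and the hypothesis $\bar c_0=\bar C_\ast=0$ to kill the $\partial\Omega_0$ boundary contribution. Up to that point the two proofs coincide.

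The difference is in how you finish. When $\rho>0$ your conclusion is fine: the vanishing of $\rho\int_{\Omega_0}(u_1-u_2)^2$ and $\tfrac{1}{\delta}\int_{\Omega\setminus\Omega_0}(u_1-u_2)^2$ gives $u_1\equiv u_2$ directly. But the lemma is stated for all $\rho\geq 0$, and in the case $\rho=0$ you invoke ``unique continuation for the Monge--Amp\`ere/linearized Monge--Amp\`ere pair'' from overdetermined Cauchy data on $\partial\Omega_0$. This is not a standard result; unique continuation for fully nonlinear fourth-order systems of this type is delicate and would require substantial justification that you have not supplied. As written, this step is a gap.

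The paper closes the argument differently and more simply, in a way that works uniformly in $\rho\geq 0$. Once you know that the sum of nonnegative terms equals zero, the entire chain of inequalities leading to (\ref{uni_ineq}) must collapse to equalities. In particular, the pointwise concavity inequalities (\ref{Ga})--(\ref{Gb}) (applied with $(u,\tilde u)$ replaced by $(u_1,u_2)$) become equalities a.e. Since $G$ is \emph{strictly} concave, this forces $\det D^2 u_1=\det D^2 u_2$ throughout $\Omega$. Both functions then solve the same Monge--Amp\`ere equation with the same boundary data $\varphi$, so $u_1\equiv u_2$ by the comparison principle. You can simply replace your unique-continuation paragraph with this observation.
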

\begin{proof} Suppose that $u\in W^{4,q}(\Omega)$ and $\hat u\in W^{4,q}(\Omega)$ are two uniformly convex solutions of (\ref{Abreu2})-(\ref{fdel}). 
Let $\hat U= (\hat U^{ij})$ be the cofactor matrix of $D^2 \hat u$ and let $\hat w= G^{'}(\det D^2 \hat u)$. Here, $G(t)=\frac{t^{\theta}-1}{\theta}$ for $t>0$ (when $\theta=0$, we set $G(t)=\log t$).
We use the same notation as in the proof of Lemma \ref{ubound}. Then, we obtain as in (\ref{concave}) the estimate
\begin{eqnarray} 
\label{uni_ineq}
0&\geq& 
  \int_{\Omega} wU^{ij}(u-\hat u)_{ij} + \hat w \hat U^{ij}(\hat u-u)_{ij}\nonumber\\
  &=& \int_{\partial \Omega} \psi (U^{ij}-\hat{U}^{ij}) (u_j -\hat{u}_j ) \nu_i +\int_{\Omega} (f_{\delta}(\cdot, u, Du, D^2 u)- f_{\delta}(\cdot, \hat u, D\hat u, D^2\hat u)) (u-\hat u) dx\nonumber \\ &=&
 \int_{\partial\Omega} \psi (U^{\nu\nu}-\hat U^{\nu\nu})(u_{\nu}-\hat u_{\nu}) + \int_{\Omega_0} [f^0(x, u(x))- f^0(x, \hat u(x))] (u-\hat u) dx \nonumber \\&+& \int_{\Omega_0}\div (\nabla_p F^1(x, D\hat u(x))-\nabla_p F^1(x, Du(x))) (u-\hat u) dx + \frac{1}{\delta}\int_{\Omega\setminus \Omega_0} (u-\hat u)^2 dx.
\end{eqnarray}
By (\ref{AsF0}), the integral concerning $f^0$ in the above expression is nonnegative. Hence, integrating by parts and using $\bar c_0=\bar C_\ast =0$, we find that
\begin{equation}
 \label{Gauss3}
0\geq \int_{\partial\Omega} \psi (U^{\nu\nu}-\hat U^{\nu\nu})(u_{\nu}-\hat u_{\nu}) +   \int_{\Omega_0}(\nabla_p F^1(x, D\hat u(x))-\nabla_p F^1(x, Du(x))) (D \hat u-D u) dx.
\end{equation}
It is clear from (\ref{uij1}) that if $u_\nu>\hat u_{\nu}$ then $U^{\nu\nu}>\hat U^{\nu\nu}$. Therefore, from the convexity of $F^1(x, p)$ in $p$ which follows from (\ref{AsH}), we deduce that $u_\nu=\hat u_\nu$ on $\p\Omega$ and that 
(\ref{Gauss3}) must be an equality. This implies that (\ref{uni_ineq}) must be an equality. 
From the derivation of (\ref{uni_ineq}) using the strict concavity of $G$ as in (\ref{Ga}) and (\ref{Gb}) but applied to $\det D^2 u$ and $\det D^2 \hat u$, 
and the fact that (\ref{uni_ineq}) is now an equality, we deduce that  $\det D^2 u=\det D^2 \hat u$ in $\Omega$. Hence $u=\hat u$ on $\overline{\Omega}$.
\end{proof}

In the next lemma, we establish the universal bounds from below and above for the Hessian determinant of solutions to (\ref{Abreu2})-(\ref{fdel}).

\begin{lem}  \label{upwlem} 
Let $n=2$, $q>n$ and $0\leq \theta<1/n$.
Assume that $\varphi\in W^{4,q}(\Omega)$ and $\psi\in W^{2,q}(\Omega)$
with $\inf_{\p \Omega}\psi>0$. Assume that (\ref{AsF0})-(\ref{AsH1}) are satisfied. Suppose that ({\bf SC}) holds.
Let $u$ be a smooth, uniformly convex solution of the system (\ref{Abreu2})-(\ref{fdel}). 
There is a universal constant $C>0$ such that 
$$C^{-1}\leq \det D^2 u\leq C ~\text{in }\Omega.$$
\end{lem}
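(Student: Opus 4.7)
The plan is to establish $C^{-1}\le\det D^2u\le C$ in two stages: an upper bound via a maximum principle argument on $\rho+Mu$ where $\rho=\log\det D^2u$, and a lower bound via an Aleksandrov--Bakelman--Pucci (ABP) estimate applied to $-\rho$ that crucially uses the upper bound as input.

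First I would collect preliminary bounds. Lemma \ref{ubound} gives $\|u\|_{L^\infty(\Omega)}\le C$; since $\Omega_0\subset\subset\Omega$, Lemma \ref{ucon_lem}(ii) then gives $\|Du\|_{L^\infty(\Omega_0)}\le C$. Using (\ref{AsF0})--(\ref{AsH}), this yields $|f^0(x,u)|,|F^1_{p_ix_i}(x,Du)|\le C$ in $\Omega_0$. Since $F^1_{p_ip_j}u_{ij}\ge 0$ (product of two positive semi-definite matrices), one gets $f_\delta = f^0-F^1_{p_ix_i}-F^1_{p_ip_j}u_{ij}\le C$ in $\Omega_0$, while $|f_\delta|=|u-\varphi|/\delta\le C$ in $\Omega\setminus\Omega_0$. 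Hence $(f_\delta)^+\le C_1$ everywhere in $\Omega$. Setting $d=\det D^2u$ and $\rho=\log d$, differentiating $w=d^{\theta-1}$ twice and substituting into $U^{ij}w_{ij}=f_\delta$ yields the pointwise identity
$$U^{ij}\rho_{ij}=-\frac{f_\delta\,d^{1-\theta}}{1-\theta}+(1-\theta)\,U^{ij}\rho_i\rho_j.$$

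For the upper bound on $d$, I would consider $v=\rho+Mu$ with $M>0$ to be chosen. On $\partial\Omega$, $d=\psi^{1/(\theta-1)}$ lies between positive constants (because $0<\inf\psi\le\sup\psi<\infty$), so $v|_{\partial\Omega}$ is bounded. Using $U^{ij}u_{ij}=nd$ and the identity above, at an interior maximum $x_0$ of $v$, the inequality $U^{ij}v_{ij}(x_0)\le 0$ together with the nonnegativity of $(1-\theta)U^{ij}\rho_i\rho_j$ forces $Mn\,d(x_0)\le f_\delta(x_0)\,d(x_0)^{1-\theta}/(1-\theta)$, i.e.\ $f_\delta(x_0)\ge Mn(1-\theta)\,d(x_0)^\theta$. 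Combined with $f_\delta\le C_1$, this either produces a contradiction (for $\theta=0$ and $M>C_1/n$) or bounds $d(x_0)^\theta$ by a constant depending on $M$. In either case $v$ is universally bounded on $\overline\Omega$, and then $\rho\le C+M\|u\|_{L^\infty}\le C$ gives $d\le C$.

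For the lower bound on $d$, discarding the nonnegative term $(1-\theta)U^{ij}\rho_i\rho_j$ in the key identity yields $U^{ij}(-\rho)_{ij}\le f_\delta\,d^{1-\theta}/(1-\theta)$. Applying the ABP estimate to $-\rho$ with coefficient matrix $U$ (for which $\det U=d^{n-1}$) gives
$$\sup_\Omega(-\rho)\le\sup_{\partial\Omega}(-\rho)+\frac{C(n,\Omega)}{1-\theta}\,\bigl\|(f_\delta)^+\,d^{(1-n\theta)/n}\bigr\|_{L^n(\Omega)}.$$
The main observation making the argument close is that the exponent $(1-n\theta)/n$ is positive precisely when $\theta<1/n$, which is our standing hypothesis (and ties this step to the restriction $n=2$). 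Combining the upper bound $d\le C$ from the previous step with $(f_\delta)^+\le C_1$ shows that the $L^n$ norm is universally bounded, while $\sup_{\partial\Omega}(-\rho)=\sup_{\partial\Omega}(\log\psi)/(1-\theta)$ is finite thanks to $\sup\psi<\infty$. One concludes $-\rho\le C$, hence $d\ge e^{-C}>0$, completing the proof. The delicate step is the lower bound, where the positivity of the ABP exponent $(1-n\theta)/n$ is indispensable and is exactly what the constraint $\theta<1/n$ guarantees.
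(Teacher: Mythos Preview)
Your argument is correct and, like the paper's, pairs a maximum-principle step with an ABP step, but you run them in the opposite order and with different auxiliary functions. The paper first proves the \emph{lower} bound on $\det D^2u$ by applying ABP to $\bar w=w+C_\ast|x|^2$: the quadratic correction makes $U^{ij}\bar w_{ij}\ge-\hat f$ with $\hat f\in L^\infty$ (it absorbs $F^1_{p_ip_j}u_{ij}$ via $U^{ij}(|x|^2)_{ij}=2\,\mathrm{trace}\,U=2\Delta u$ in two dimensions), and since $\det U=w^{1/(\theta-1)}$ the ABP estimate yields the self-improving bound $\|w\|_{L^\infty}\le C+C\|w\|_{L^\infty}^{1/(2(1-\theta))}$, which closes because $\theta<1/2$. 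Only afterwards does the paper obtain the \emph{upper} bound on $\det D^2u$ via a maximum principle for $\log(w+\varepsilon)-Mu$ with respect to the operator $u^{ij}\partial_{ij}$, choosing $M$ with the help of the lower bound just established (to control $f_\delta/(w\det D^2u)$). Your route inverts this dependency: the maximum-principle step on $\rho+Mu$ gives the upper bound on $d$ outright, since the inequality $f_\delta\ge Mn(1-\theta)d^\theta$ at an interior maximum bounds $d$ without any prior information on $d$; your ABP step on $-\rho$ then consumes that upper bound to control $d^{(1-n\theta)/n}$. Both proofs use $\theta<1/n$ at precisely the ABP stage. One small correction: your parenthetical that the exponent ``ties this step to the restriction $n=2$'' is off---the positivity of $(1-n\theta)/n$ needs only $\theta<1/n$, and in fact both arguments for this lemma go through in any dimension once Lemma~\ref{ubound} is available; the two-dimensional restriction in the paper enters later, through the H\"older estimate of Theorem~\ref{global-H} used in the proof of Theorem~\ref{thmw4p}.
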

\begin{proof}[Proof of Lemma \ref{upwlem}]  To simplify notation, we use $f_\delta$ to denote $f_\delta(\cdot, u, Du, D^2 u)$.\\
{\it Step 1: Lower bound for $\det D^2 u$.} Let $\bar w=w+ C_\ast |x|^2$. Let $\chi_{\Omega_0}$ be the characteristic function of $\Omega_0$, that is $\chi_{\Omega_0}(x)=1$ if $x\in\Omega_0$ and
$\chi_{\Omega_0}(x)=0$, if otherwise. Then, using (\ref{fdel}), (\ref{AsH}) and the universal bound for $u$ in Lemma \ref{ubound}, we find that in $\Omega$ the following hold:
\begin{eqnarray*}U^{ij}\bar w_{ij} = f_{\delta} + 2C_\ast\Delta u& \geq& -C - |F^1_{p_i x_i}(x, Du)|\chi_{\Omega_0}(x)- F^1_{p_i p_j}(x, Du) u_{ij} + 2C_\ast\Delta u \\ &\geq&  -C -C_\ast|D u|\chi_{\Omega_0}:=-\hat f.
\end{eqnarray*}
By  combining the universal bound for $u$ in Lemma \ref{ubound} and Lemma \ref{ucon_lem}(ii), we obtain that $\|\hat f\|_{L^{\infty}(\Omega)}\leq C$ for a universal constant $C$. Thus, $\|\hat f\|_{L^{2}(\Omega)}\leq C$. In two dimensions, we have $$\det (U^{ij})=(\det D^2 u)^{n-1}=\det D^2 u=w^{\frac{1}{\theta-1}}$$ where we used the second equation of (\ref{Abreu2}) 
for the last equality. 
Now, we apply the ABP estimate \cite[Theorem 2.21]{HL} to $\bar w$ on $\Omega$ to obtain 
\begin{equation*}
\|\bar w\|_{L^{\infty}(\Omega)} 
 \leq \sup_{\p\Omega} \bar w + C_2 \text{diam}(\Omega) \|\frac{\hat f}{(\det U^{ij})^{1/2}}\|_{L^{2}(\Omega)} \leq  \sup_{\p\Omega} (\psi + C_\ast |x|^2) + C  \|w^{\frac{1}{2(1-\theta)}}\hat f\|_{L^{2}(\Omega)}. 
\end{equation*}
Clearly, $\bar w\geq w>0$.  Therefore, the above estimates and $0\leq \theta<1/2$ give
$$\|w\|_{L^{\infty}(\Omega)}  \leq C + C  \|w^{\frac{1}{2(1-\theta)}}\hat f\|_{L^{2}(\Omega)}\leq C + C \|w^{\frac{1}{2(1-\theta)}}\|_{L^{\infty}(\Omega)} \|\hat f\|_{L^{2}(\Omega)} \leq C + C \|w\|^{\frac{1}{2(1-\theta)}}_{L^{\infty}(\Omega)}.$$
Because $0\leq \theta<1/2$, we have $\frac{1}{2(1-\theta)}<1$. It follows that $w$ is bounded from above.
Since $\det D^2 u=w^{\frac{1}{\theta-1}}$, 
we conclude that $\det D^2 u$ is bounded from below by a universal constant $C>0$.\\
{\it Step 2: Upper bound for $\det D^2 u$.} Since $\det D^2 u=w^{\frac{1}{\theta-1}}$ where $0\leq \theta<1/2$, to prove the universal upper bound for $\det D^2 u$, we only need to obtain a positive lower bound for $w$. 
For this, we use the ABP maximum principle; see \cite{CW,TW05} for a slightly different argument. 

First, we will use the following splitting of $f_\delta$: 
\begin{equation}
\label{wgamma1}
U^{ij} w_{ij}=f_\delta=\gamma_1 \Delta u + g~\text{in }\Omega,
\end{equation}
where
$$\gamma_1(x) = \left\{\begin{array}{rl}
 - \frac{F^1_{p_i p_j} (x, Du) u_{ij}}{\Delta u} &  x\in \Omega_0,\\
0 & x\in  \Omega\setminus \Omega_0
\end{array}\right.$$
and
\begin{equation}
\label{g_formu}
g(x) = \left\{\begin{array}{rl}
 f^0(x, u(x)) - F^1_{p_i x_i}(x, Du(x)) &  x\in \Omega_0,\\
\frac{1}{\delta}(u (x)-\varphi(x) ) & x\in  \Omega\setminus \Omega_0.
\end{array}\right.
\end{equation}
From (\ref{AsH}), we have
\begin{equation}
\label{gamma1}
\|\gamma_1\|_{L^{\infty}(\Omega)}\leq C_\ast.
\end{equation}
 We claim that
 \begin{equation}
 \label{gL2}
 \|g\|_{L^{\infty}(\Omega)}\leq C.
 \end{equation}
 Indeed, from (\ref{AsF0}) and (\ref{AsH}), we easily find that for all $x\in\Omega$
 \begin{equation}
   \label{gptwise}
 |g(x)| \leq \left\{\begin{array}{rl}
 \eta (\|u\|_{L^{\infty}(\Omega)}) + c_0 |Du(x)| + C_\ast &  x\in \Omega_0,\\
 \frac{1}{\delta} (\|u\|_{L^{\infty}(\Omega)} + \|\varphi\|_{L^{\infty}(\Omega)})& x\in  \Omega\setminus \Omega_0.
\end{array}\right.
 \end{equation}
Now, we use the universal bound for $u$ in Lemma \ref{ubound} and Lemma \ref{ucon_lem} (ii) to derive (\ref{gL2}) from (\ref{gptwise}).
 
Recall from (i) that $\det D^2 u\geq C_1$.
Thus, $$w \det D^2 u=(\det D^2 u)^{\theta}\geq C_1^{\theta}:=c>0.$$

From (\ref{wgamma1}) and $\gamma_1\leq 0$, we find that $f_\delta\leq g$. 
By (\ref{gL2}), we find that $f_\delta^{+}\leq |g|$ 
is bounded by a universal constant. 
Let $$M=\frac{ |f_{\delta}^{+}|_{L^{\infty}(\Omega)} + 1}{2 c}<\infty~\text{and } v^\e= \log (w+\e)-Mu\in W^{2,q}(\Omega)$$
where  $\e>0$. Then, in $\Omega$, we have
\begin{eqnarray*}u^{ij} v^\e_{ij} = u^{ij} \left(\frac{w_{ij}}{w+\e}-\frac{w_i w_j}{(w+\e)^2} -M u_{ij}\right)&\leq& \frac{u^{ij} w_{ij} }{w+\e} - nM = \frac{f_{ \delta}}{(w+\e) \det D^2 u}-2 M \\ &\leq&  \frac{\|f^{+}_{ \delta}\|_{L^{\infty}(\Omega)}}{c}-2 M  <0.
\end{eqnarray*}
By the ABP estimate (see \cite[Theorem 2.21]{HL}) for $-v^\e$ in $\Omega$, we have
$$v^{\e}\geq \inf_{\p\Omega} v^\e\geq \log (\inf_{\p\Omega}\psi) -M\varphi\geq -C~\text{in }\Omega.$$
From $v^\e= \log (w+\e)-Mu$ and the universal bound for $u$ in Lemma \ref{ubound}, we obtain  $\log (w+ \e)\geq -C$. Thus, letting $\e\rightarrow 0$, we get $w\geq e^{-C}$ as desired.
\end{proof}

Now, we are a in position to prove Theorem \ref{thmw4p}.

\begin{proof}[Proof of Theorem \ref{thmw4p}]  To simplify notation, we use $f_\delta$ to denote $f_\delta(\cdot, u, Du, D^2 u)$.\\
From Lemma \ref{upwlem}, we can find a universal constant $C>0$ such that
\begin{equation}
\label{udet_bound}
C^{-1}\leq\det D^2 u\leq C~\text{in }\Omega.
\end{equation}
From $\varphi\in W^{4, q}(\Omega)$ with $q>n$, we have $\varphi\in C^3(\overline{\Omega})$ by the Sobolev embedding theorem. By assumption, $\Omega$ is bounded, smooth and uniformly convex.
From $u=\varphi$ on $\p\Omega$ and (\ref{udet_bound}), we can apply the global $W^{2, 1+\e_0}$ estimates for the Monge-Amp\`ere equation in Theorem \ref{global-w21} to conclude that
\begin{equation}
\label{D^2uL1}
\|D^2 u\|_{L^{1+\e_0}(\Omega)}\leq C_1
\end{equation}
for some universal constants $\e_0>0$ and $C_1>0$. 
Recall the following splitting of $f_\delta$ in (\ref{wgamma1}):
\begin{equation}
\label{fdeltag1}
f_\delta= \gamma_1 \Delta u + g.
\end{equation}
 Thus, from (\ref{D^2uL1}), (\ref{gamma1}) and (\ref{gL2}), we find that
  $$\|f_\delta \|_{L^{1+\e_0}(\Omega)}\leq C_2$$ for a universal constant $C_2>0$. 
  From $\psi\in W^{2, q}(\Omega)$ with $q>n$, we have $\psi\in C^1(\overline{\Omega})$ by the Sobolev embedding theorem. 
  Now, we apply the global H\"older estimates for the linearized Monge-Amp\`ere equation in Theorem \ref{global-H} to $U^{ij} w_{ij}=f_{\delta}$ in $\Omega$ with boundary value $w=\psi\in C^{1}(\p\Omega)$ on $\p\Omega$ to 
  conclude that $w\in C^{\alpha}(\overline{\Omega})$
  with 
  \begin{equation}
  \label{walpha}
  \|w\|_{C^{\alpha}(\overline{\Omega})}\leq  C\left(\|\psi\|_{C^{1}(\p\Omega)} + \|f_\delta\|_{L^{1+\e_0}(\Omega)}\right)\leq C_3
  \end{equation}
  for universal constants $\alpha\in (0, 1)$ and $C_3>0$. Now, we note that $u$ solves the Monge-Amp\`ere equation
  $$\det D^2 u= w^{\frac{1}{\theta-1}}$$
  with right hand side being in $C^{\alpha}(\overline{\Omega})$ and boundary value $\varphi\in C^3(\p\Omega)$ on $\p\Omega$.
  Therefore, by the global $C^{2,\alpha}$ estimates for the Monge-Amp\`ere equation \cite{TW08, S}, we have $u\in C^{2,\alpha}(\overline{\Omega})$
  with universal estimates
  \begin{equation}
  \label{u2alpha}
   \|u\|_{C^{2,\alpha}(\overline{\Omega})}\leq C_4~\text{and } C_4^{-1} I_2\leq D^2 u\leq C_4 I_2.
   \end{equation}
   As a consequence, the second order operator $U^{ij} \p_{ij}$ is uniformly elliptic with H\"older continuous coefficients. 
   
Recalling (\ref{fdeltag1}), and using (\ref{gamma1}) together with (\ref{gL2}), we obtain
\begin{equation}
\|f_\delta \|_{L^{\infty}(\Omega)}\leq C_5.
\end{equation}
Thus, from the equation $U^{ij} w_{ij}=f_{\delta}$ with boundary value $w=\psi$ where $\psi\in W^{2, q}(\Omega)$, we conclude that 
 $w\in W^{2, q}(\Omega)$ and therefore $u\in W^{4,q}(\Omega)$ with universal estimate
 \begin{equation*}
   \|u\|_{W^{4,q}(\Omega)}\leq C_6.
   \end{equation*}
\end{proof}

\section{Proofs of the main theorems}
\label{pf_sec}
In this section, we prove Theorems \ref{SBV1}, \ref{SBV2}, \ref{SBV3} and \ref{SBV4}.
\begin{proof}[Proof of Theorem \ref{SBV1}]
The existence and uniqueness result in (ii) follows from the existence in (i) and the uniqueness result in Lemma \ref{uni_lem}.
It remains to prove (i). 

The proof of (i) uses the a priori estimates in Theorem \ref{thmw4p} and degree theory as in \cite{CW, TW05} (see also \cite{LMT}). Since the proof is short, we include it here. Assume $q>n$.

Fix $\alpha \in (0,1)$. For  a large constant $R>1$ to be determined, define a bounded set $D(R)$ in $C^{\alpha}(\ov{\Omega})$ as follows:
$$D(R) = \{ v \in C^{\alpha}(\ov{\Omega}) \ | \ v \ge R^{-1}, \ \| v\|_{C^{\alpha}(\ov{\Omega})} \le R\}.$$

For $t \in [0,1]$, we will define an operator $\Phi_t : D(R) \rightarrow C^{\alpha}(\ov{\Omega})$ as follows.
Given $w \in D(R)$, define $u \in C^{2, \alpha}(\ov{\Omega})$ to be the unique uniformly convex solution to
\begin{equation}\label{LS1}
 \left\{
 \begin{alignedat}{2}
   \det D^2 u ~&=w^{\frac{1}{\theta-1}} \h~&&\text{in} ~\Omega, \\\
 u&= \varphi \h~&&\text{on}~ \p\Omega.
 \end{alignedat}
 \right.
\end{equation}
The existence of $u$ follows from the boundary regularity result of the Monge-Amp\`ere equation established by Trudinger and Wang \cite{TW08}.
Next, let $w_t \in W^{2,q}(\Omega)$  be the unique solution to the equation
\begin{equation}\label{LS2}
 \left\{
 \begin{alignedat}{2}
   U^{ij} (w_t)_{ij} ~&=tf_{\delta}(\cdot, u, Du, D^2 u) \h~&&\text{in} ~\Omega, \\\
 w_t&= t\psi + (1-t) \h~&&\text{on}~ \p\Omega.
 \end{alignedat}
 \right.
\end{equation}
Because $q>n$, $w_t$ lies in  $C^{ \alpha}(\ov{\Omega})$.  We define $\Phi_t$ to be the map sending $w$ to $w_t$.

We note that:
\begin{enumerate}
\item[(i)] $\Phi_0(D(R)) = \{ 1\} $, and in particular, $\Phi_0$ has a unique fixed point.
\item[(ii)] The map $[0,1] \times D(R) \rightarrow C^{\alpha}(\ov{\Omega})$ given by $(t,w) \mapsto \Phi_t(w)$ is continuous.  
\item[(iii)] $\Phi_t$ is compact for each $t \in [0,1]$.
\item[(iv)] For every $t\in [0,1]$, if $w \in D(R)$ is a fixed point of $\Phi_t$ then $w \notin \partial D(R)$.
\end{enumerate}
Indeed, part (iii) follows from the standard \emph{a priori} estimates for the two separate equations (\ref{LS1}) and (\ref{LS2}).  For part (iv), let $w>0$ be a fixed point of $\Phi_t$.  
Then $w \in W^{2,q}(\Omega)$ and hence $u \in W^{4,q}(\Omega)$.  Next we apply Theorem \ref{thmw4p} to obtain $w>R^{-1}$ and $\| w \|_{C^{\alpha}(\ov{\Omega})} < R$ for some $R$ sufficiently large, depending only on the initial data
but independent of $t\in [0,1]$.

Then the Leray-Schauder degree of $\Phi_t$ is well-defined for each $t$ and is constant on $[0,1]$ (see \cite[Theorem 2.2.4]{OCC}, for example).  $\Phi_0$ has a fixed point and hence $\Phi_1$ 
must also have a fixed point $w$, giving rise to a uniformly convex solution $u\in W^{4,q}(\Omega)$ of  our second boundary value problem (\ref{Abreu2})-(\ref{fdel}).
\end{proof}

\begin{proof}[Proof of Theorem \ref{SBV2}] 
If $\bar c_0=\bar C_\ast =0$, then
the existence of a unique uniformly convex solution $u_\e \in W^{4,q}(\Omega)$ to the system (\ref{Abreu2e})-(\ref{fdele}) for all $q\in (n,\infty)$ follows from Theorem \ref{SBV1} (ii).
If $\rho$ is sufficiently large (depending only on $\bar c_0 +\bar C_\ast$, $\Omega_0$ and $\Omega$),
then the existence of a uniformly convex solution $u_\e \in W^{4,q}(\Omega)$ to the system (\ref{Abreu2e})-(\ref{fdele}) for all $q\in (n,\infty)$ follows from Theorem \ref{SBV1} (i); moreover, since we are interested in the limit of $\{u_\e\}$ when $\e\rightarrow 0$, we can assume that $\e$
is sufficiently small  (depending only on $\bar c_0 +\bar C_\ast$, $\Omega_0$ and $\Omega$) so that Lemma \ref{lem_uep} applies. 

In all cases, by Lemma \ref{lem_uep}, there is a universal constant $C$ independent of $\e$ such that
\begin{equation} 
\label{Jepconvex}
 \int_{\partial \Omega} \e (u_\e)_{\nu}^2  + \rho \int_{\Omega_0}|u_\e-\varphi|^2 dx +    \int_{\Omega\setminus\Omega_0} \frac{1}{\e}|u_\e-\varphi|^2 dx \leq C.
\end{equation} 
{\it Step 1: A subsequence of $\{u_\e\}$ converges.} First, we show that, up to extraction of a subsequence,  $u_\e$ converges uniformly on compact subsets of $\Omega$ to a convex function $u\in \bar S[\varphi, \Omega_0]$ where $\bar S[\varphi, \Omega_0]$ is defined as in (\ref{barS}). 
Indeed, by  Lemma \ref{ucon_lem} (i),  $u_\e=\varphi $ on $\p\Omega$ and (\ref{Jepconvex}) where $\rho>0$, we have
\begin{equation*}
\|u_\e\|^2_{L^{\infty}(\Omega)}  \leq C(n,\Omega, \max_{\p \Omega} u_\e) + C(n,\Omega)\int_{\Omega} |u_\e|^2 dx \leq C
\end{equation*}
for a universal constant $C>0$.
It follows that the sequence $\{u_\e\}$ is uniformly bounded. By Lemma \ref{ucon_lem} (ii), $|Du_\e|$ is uniformly bounded on compact subsets of $\Omega$. Thus, by the Arzela--Ascoli  theorem, up to extraction of a subsequence,  $u_\e$ converges uniformly on compact subsets of $\Omega$ to a convex function $u$. 
Moreover, we can assume that $u_\e$ converges to $u$ on $W^{1,2}(\Omega_0)$.
Using (\ref{Jepconvex}), we get $u\in \bar S[\varphi, \Omega_0]$.

Let $G(t)=\frac{t^{\theta}-1}{\theta}$ for $t>0$ (when $\theta=0$, we set $G(t)=\log t$).

Next, consider the following functional $J_\e$ over the set of convex functions $v$ on $\overline{\Omega}$:
\begin{equation}
\label{Je_def}
J_{\e}(v)=\int_{\Omega_0}  [F^0(x, v(x)) + F^1(x, Dv(x))]dx +\frac{1}{2\e}\int_{\Omega\setminus\Omega_0} (v-\varphi)^2 dx-\e\int_{\Omega} G( \det D^2 v) dx.
\end{equation}
By the Rademacher theorem (see \cite[Theorem 2, p.81]{EG}), $v$ is differentiable a.e. By the Alexandrov theorem (see \cite[Theorem 1, p.242]{EG}), $v$ is twice differentiable a.e and at those points of twice differentiability, we denote, with a slight abuse of notation, $D^2 v$
its Hessian matrix. Thus, the functional $J_\e$ is well defined with this convention.

Let $U^{\nu\nu}_\e=U^{ij}_\e \nu_i\nu_j$ be as in the proof of Lemma \ref{ubound}.  Let $\tau$ be the tangential direction along $\p\Omega$. Let $K$ be the curvature of $\p\Omega$. 
Since $u_\e=\varphi$ on $\p\Omega$,
in two dimensions, we have as in (\ref{uij1})
\begin{equation}
\label{ueij}
U_\e^{\nu\nu}= (u_\e)_{\tau\tau}=K(u_\e)_\nu-K\varphi_\nu + \varphi_{\tau \tau}.
\end{equation}
{\it Step 2: Almost minimality property of $u_\e$.} We show that if $v$ is a convex function in $\overline{\Omega}$ with $v=\varphi$ in a neighborhood of $\p\Omega$ then
\begin{equation}
\label{Je_min}
J_\e(v)-J_\e(u_\e)\geq  \e \int_{\p\Omega} \psi U_{\e}^{\nu\nu} \p_{\nu}(u_\e-\varphi) +  \int_{\p\Omega_0}(v-u_\e)\nabla_p F^1(x, Du_\e(x)) \cdot \nu_0 dS.
\end{equation} 
The proof of (\ref{Je_min}) uses mollification to deal with general convex functions $v$. For $h>0$, let $$\Omega_h = \{x\in\Omega\mid \dist(x,\p\Omega)>h\}~\text{and }
v_h(x)=h^{-n} \int_{\Omega}\phi(\frac{x-y}{h}) v(y)dy~\text{for } x\in\Omega_h$$ 
where
$\phi\geq 0, ~\phi\in C^{\infty}_0(\R^n), ~\text{supp }\phi\subset B_1(0) ~\text{and }\int_{\R^n} \phi dx=1.$
Clearly, $v_h\rightarrow v$ uniformly on compact subsets of $\Omega$. Since $v=\varphi$ near $\p\Omega$ and $\varphi\in C^{3,1}(\overline{\Omega})$ is uniformly convex in $\overline{\Omega}$, we can extend $v_h$ to be a uniformly convex $C^{3}(\overline{\Omega})$ function, still denoted by $v_h$, such that
\begin{equation}
\label{Dkv}
D^k v_h\rightarrow D^k v \text{ in a 
neighborhood of } \p\Omega~\text{for all } k\leq 2.
\end{equation}
By \cite[Lemma 6.3]{TW00}, we have
$$\lim_{h\rightarrow 0} \int_{\Omega_h} G(\det D^2 v_h) =\int_{\Omega} G(\det D^2 v).$$
This together with (\ref{Dkv}) implies that
\begin{equation}
\label{Jevh_lim}
\lim_{h\rightarrow 0} J_\e(v_h)= J_\e (v).
\end{equation}
Now, we estimate $J_\e(v_h)-J_\e(u_\e)$. 

As in the proof of Lemma \ref{ubound}, we have
$$-G(\det D^2 v_h) + G(\det D^2 u_\e)\geq w_\e U^{ij}_\e (u_\e-v_h)_{ij}.$$
Integrating by parts twice, recalling  $U^{ij}_\e (w_\e)_{ij} =\e^{-1} f_{\e}(\cdot, u_\e, Du_\e, D^2 u_\e),$ and (\ref{nunu}),
 we get 
\begin{multline}
\label{vh_G}
\int_{\Omega}(-G(\det D^2 v_h) + G(\det D^2 u_\e)) dx\geq \int_{\Omega}w_\e U^{ij}_\e (u_\e-v_h)_{ij}\\= \int_{\Omega} \e^{-1}f_{\e}(\cdot, u_\e, Du_\e, D^2 u_\e) (u_\e-v_h) dx- \int_{\p\Omega} (w_\e)_i U^{ij}_\e (u_\e-v_h) \nu_j+ \int_{\p\Omega} \psi U_{\e}^{\nu\nu} \p_{\nu}(u_\e-v_h).
\end{multline}
From the convexity of $F^0, F^1$ and $(v-\varphi)^2$, 
we have
\begin{eqnarray}
\label{Jecon_vh}
J_\e(v_h)-J_\e(u_\e)&\geq& \int_{\Omega_0} [f^0(x, u_\e(x)) (v_h-u_\e) + \nabla_p F^1(x, Du_\e(x)) (D v_h-Du_\e)] dx \nonumber\\
&+& \frac{1}{\e}\int_{\Omega\setminus\Omega_0} (u_\e-\varphi) (v_h-u_\e) dx +\e \int_{\Omega}(-G(\det D^2 v_h) + G(\det D^2 u_\e)) dx.
\end{eqnarray}
In view of (\ref{fdele}) and (\ref{vh_G}), we can integrate by parts the right hand side of (\ref{Jecon_vh}) to get, after a simple cancellation,
\begin{multline}
\label{Je_vh}
J_\e(v_h)-J_\e(u_\e)\geq 
- \e\int_{\p\Omega} (w_\e)_i U^{ij}_\e (u_\e-v_h) \nu_j+ \e \int_{\p\Omega} \psi U_{\e}^{\nu\nu} \p_{\nu}(u_\e-v_h)\\ +  \int_{\p\Omega_0}(v_h-u_\e)\nabla_p F^1(x, Du_\e(x)) \cdot \nu_0.
\end{multline}
By (\ref{Dkv}), the right hand side of (\ref{Je_vh}) tends to the right hand side of (\ref{Je_min}) when $h\rightarrow 0$. 
On the other hand, in view of (\ref{Jevh_lim}), the left hand side of (\ref{Je_vh}) tends to the left hand side of (\ref{Je_min}) when $h\rightarrow 0$.
Therefore, (\ref{Je_min}) is proved by letting $h\rightarrow 0$ in (\ref{Je_vh}).

{\it Step 3: Minimality of $u$.} We show that $u$ (in Step 1) is a minimizer of the functional $J$ defined by (\ref{J_def}) over $\bar S[\varphi, \Omega_0]$.
For all $v\in\bar S[\varphi,\Omega_0]$ (extended by $\varphi$ on $\Omega\setminus\Omega_0$), we use (\ref{Je_min})  to conclude that
$$J_\e(v_\e)-J_\e(u_\e)\geq \e \int_{\p\Omega} \psi U_{\e}^{\nu\nu} \p_{\nu}(u_\e-\varphi)-O(\e)$$
where
$$v_\e=(1-\e) v + \e\varphi \in \bar S[\varphi, \Omega_0].$$
Since $\lim_{\e\rightarrow 0} J(v_\e)=J(v)$, it follows that
\begin{equation}
\label{Jv}
J(v)\geq \liminf_{\e} J(u_\e) + \liminf_{\e} \e[\int_{\Omega} G(\det D^2 v_\e)- G(\det D^2 u_\e)] + \liminf_{\e}\e \int_{\p\Omega} \psi U_{\e}^{\nu\nu} \p_{\nu}(u_\e-\varphi).
\end{equation}
From (\ref{Jepconvex}), we have
$$\int_{\p\Omega} |(u_\e)_\nu| \leq C\e^{-1/2}$$
and hence, invoking (\ref{ueij}), one finds that
\begin{equation}
\label{ue_bdr}
 \e\int_{\p\Omega} \psi U_{\e}^{\nu\nu} \p_{\nu}(u_\e-\varphi) \geq -C\e \int_{\p\Omega}[1 +|(u_\e)_\nu|]\geq -C\e^{1/2}.
\end{equation}
Observe from $0\leq \theta<1/2$ that,  $G(d) \leq C(1 + d^{1/2})~\text{for all } d>0.$
It follows that
\begin{eqnarray}
\int_{\Omega}G(\det D^2 u_\e)&\leq& C\int_{\Omega} (1+ (\det D^2 u_\e)^{1/2}) dx \leq C\int_{\Omega} (1 + \Delta u_\e) dx\nonumber\\ &=&
C(|\Omega| + \int_{\p\Omega} (u_\e)_\nu) \leq C(1 +\e^{-1/2}).
\label{ue_det}
\end{eqnarray}
Note that $D^2 v_\e\geq \e D^2\varphi$. Therefore $\det D^2 v_\e \geq \e^2 \det D^2\varphi\geq C_1\e^2$ in $\Omega$  for $C_1>0 $ and hence
\begin{equation}
\label{ve_det}
\e\int_{\Omega}G(\det D^2 v_\e) \geq\e\int_{\Omega} G(C_1 \e^2 ) \rightarrow 0~\text{when } \e\rightarrow 0.
\end{equation}
From (\ref{Jv})--(\ref{ve_det}), we easily obtain
\begin{equation}
\label{Jev_ineq}
J(v)\geq \liminf_{\e}J(u_\e).
\end{equation}
Since $u_\e$ converges uniformly to $u$ on $\overline{\Omega_0}$, by Fatou's lemma, we have
\begin{equation}
\label{Jev_ineq2}
\liminf_{\e}\int_{\Omega_0}F^0(x, u_\e(x)) dx\geq \int_{\Omega_0} F^0(x, u(x)) dx.
\end{equation}
 From the convexity of $F^1(x, p)$ in $p$ and the fact that $u_\e$ converges to $u$ on $W^{1,2}(\Omega_0)$, by lower semicontinuity, we have
 \begin{equation}
 \label{Jev_ineq3}
 \liminf_{\e}\int_{\Omega_0}F^1(x, Du_\e(x)) dx\geq \int_{\Omega_0} F^1(x, Du(x)) dx.
 \end{equation}
 Therefore, by combining (\ref{Jev_ineq})--(\ref{Jev_ineq3}), we obtain
 $$J(v)\geq \liminf_{\e}J(u_\e)\geq J(u)~\text{for all } v\in\bar S[\varphi,\Omega_0],$$
 showing that $u$ is a minimizer of the functional $J$ defined by (\ref{J_def}) over $\bar S[\varphi, \Omega_0]$.

 {\it Step 4: Full convergence of $u_\e$ to the unique minimizer of $J$.} Since $\rho>0$, functional $J$ defined by (\ref{J_def}) over $\bar S[\varphi, \Omega_0]$ has a unique minimizer in $\bar S[\varphi, \Omega_0]$. Thus, Steps 1 and 3 actually shows that the whole sequence
 $\{u_\e\}$ converges to the unique minimizer of $J$.
The proof of the theorem is completed.
\end{proof}

\begin{proof}[Proof of Theorem \ref{SBV3}]
When $H(d)= d^{\theta-1}$ where $0\leq \theta<\frac{1}{2}$, the proof of the uniqueness of solutions in (i) is similar to that of Lemma \ref{uni_lem} so we omit it.
 The existence proof uses a priori estimates and degree theory as in Theorem \ref{SBV1}. Here, 
 we only focus on proving the a priori estimates. 
 The key is to obtain the positive bound from below and above for $\det D^2 u$:
 \begin{equation}
 \label{detD2u}
 C^{-1}\leq \det D^2 u\leq C~\text{in}~\Omega.
 \end{equation}
  Once (\ref{detD2u}) is established, 
 we can apply the global $W^{2, 1+\e_0}(\Omega)$ estimates in Theorem \ref{global-w21} for $u$ and argue as in the proof of Theorem \ref{thmw4p} that $u\in C^{2,\alpha}(\overline{\Omega})$. A bootstrap argument concludes the proof.
 
 It remains to prove (\ref{detD2u}).\\
 (i) First, by the convexity of $F$ and $u$, we have $$U^{ij} w_{ij}=-\div (\nabla_p F(Du))=-F_{p_i p_j}(Du) u_{ij} =-\text{trace}(D^2 F(D u) D^2 u)\leq 0~\text{in}~\Omega.$$ By the maximum principle, the function $w$ attains its minimum value on $\p\Omega$. It follows that $$w\geq \inf_{\p\Omega}\psi:= C>0~\text{in }\Omega.$$ 
 From the assumptions on $H$ and $w=H(\det D^2 u)$, we deduce that
 $$\det D^2 u\leq C<\infty~\text{in }\Omega.$$
From $\det D^2 u\leq C$ in $\Omega$, $u=\varphi$ on $\p\Omega$, and the uniform convexity of $\Omega$, we can construct an explicit barrier to show that
 $|Du|\leq C$ in $\Omega$ for a universal constant $C$. This together with the boundedness assumption on $F_{p_i p_j}(p)$ gives $$F_{p_i p_j}(Du(x))\leq C_1 I_2~\text{in }\Omega.$$ We compute, in $\Omega$,
$$U^{ij} (w + C_1|x|^2)_{ij}=-F_{p_i p_j}(Du) u_{ij} + 2 C_1 \text{trace } (U^{ij})\geq -C_1 \trace(u_{ij}) + 2C_1\Delta u= C_1\Delta u\geq 0.$$
By the maximum principle, $w(x) + C_1|x|^2$ attains it maximum value on the boundary $\p\Omega$. Recall that $w=\psi$ on $\p\Omega$.
Thus, for all $x\in\Omega$, we have
\begin{equation}
\label{wmaxi}
w(x) \leq w(x) + C_1|x|^2 \leq \max_{\p\Omega} (\psi + C_1|x^2|)\leq C.
\end{equation}
 From this universal upper bound for $w$, we can use $w=H(\det D^2 u)$  and the assumptions on $H$ to obtain $\det D^2 u\geq C^{-1}>0$ in $\Omega$. Therefore, (\ref{detD2u}) is proved.\\
(ii) Assume $0\leq F_{p_i p_j}(p)\leq C_\ast I_2$. As above, using  the convexity of $F$ and $u$, we can prove that $w\geq C>0$ in $\Omega$ for a universal constant $C>0$. From 
$$U^{ij} (w + C_\ast |x|^2)_{ij}=-F_{p_i p_j}(Du) u_{ij} + 2 C_\ast \text{trace } (U^{ij})\geq -C_\ast \trace(u_{ij}) + 2C_\ast\Delta u= C_\ast\Delta u \geq 0~\text{in}~\Omega$$
and the maximum principle, we also have, as in (\ref{wmaxi}),  $w\leq C_1$ in $\Omega$. 
Consequently, $$0<C\leq w \leq C_1<\infty \text{ in }\Omega.$$ From $w=H(\det D^2 u)$ and the fact that $H^{-1}$ maps compact subsets of $(0, \infty)$ into compact subset of $(0,\infty)$, we find that
$C_1\leq \det D^2 u\leq C_2$ in $\Omega$. Therefore, (\ref{detD2u}) is also proved.
\end{proof}
\begin{proof}[Proof of Theorem \ref{SBV4}]
The proof is very similar to that of Theorem \ref{SBV1}. We focus here on the a priori estimates. The key point is to establish the universal bound for $u$ as in Lemma \ref{ubound}. We do this via proving the estimate of the type
(\ref{ubound_red}). We use the same notation as in the proof of Lemma \ref{ubound} with $\theta=0$. The estimate (\ref{concave2}) with $n=2$ now becomes
\begin{equation}
\label{concave4}
 \int_{\partial \Omega} K \psi u_{\nu}^2 
 \leq  C + C\left(
\int_{\p \Omega}  (u_{\nu}^+)^2 \right)^{1)/2} +  \int_{\Omega} [-(u^3-u) + \Delta u] (u-\tilde u)dx.
\end{equation}
Since $\tilde u$ is universally bounded, there is a universal constant $C>0$ such that 
$$-(u^3-u)(u-\tilde u)\leq C.$$
Moreover, from $u\leq \sup_{\p\Omega}\varphi\leq C$ by the convexity of $u$, we have
$$\int_{\Omega} \Delta u (u-\tilde u)\leq C\int_{\p\Omega} \Delta u dx = C\int_{\p\Omega} u_{\nu}.$$
Thus (\ref{concave4})  gives
\begin{equation*}
 \int_{\partial \Omega} K \psi u_{\nu}^2 
 \leq  C + C\left(
\int_{\p \Omega}  (u_{\nu}^+)^2 \right)^{1)/2} +  C\int_{\p\Omega} u_{\nu}.
\end{equation*}
A simple application of Young's inequality to the above inequality together with the fact that $\inf_{\p\Omega} (K\psi)>0$ shows that $\int_{\p\Omega} u_\nu^2 \leq C$ which is exactly what we need to prove.
\end{proof}

\end{document}